\theoremstyle{plain}
\newtheorem{Thm}{Theorem}[section]
\newtheorem{Lem}[Thm]{Lemma}
\newtheorem{Cor}[Thm]{Corollary}
\newtheorem{Pro}[Thm]{Proposition}
\newtheorem{Prp}[Thm]{Properties}
\newtheorem{Sub}[Thm]{Sublemma}
\theoremstyle{definition}
\newtheorem{Def}[Thm]{Definition}
\newtheorem{Exm}[Thm]{Example}
\newtheorem{Exs}[Thm]{Examples}
\theoremstyle{remark}
\newtheorem{Rem}[Thm]{Remark}
\newtheorem{Rms}[Thm]{Remarks}
\newtheorem*{Com}{Commentary}
\newcommand{\myEmail}{piotr.niemiec@uj.edu.pl}
\newcommand{\myAddress}{\noindent{}Piotr Niemiec\\{}Jagiellonian University\\{}Institute of Mathematics\\{}%%
   ul. \L{}ojasiewicza 6\\{}30-348 Krak\'{o}w\\{}Poland}
\newcommand{\myData}{\author[P. Niemiec]{Piotr Niemiec}\address{\myAddress}\email{\myEmail}}
\newcommand{\RRR}{\mathbb{R}}
\newcommand{\ZZZ}{\mathbb{Z}}%% the same as '\mathbb k'
\newcommand{\BBb}{\CMcal{B}}\newcommand{\CCc}{\CMcal{C}}
\newcommand{\EEe}{\CMcal{E}}\newcommand{\FFf}{\CMcal{F}}\newcommand{\GGg}{\CMcal{G}}\newcommand{\HHh}{\CMcal{H}}
\newcommand{\KKk}{\CMcal{K}}\newcommand{\LLl}{\CMcal{L}}
\newcommand{\NNn}{\CMcal{N}}\newcommand{\OOo}{\CMcal{O}}
\newcommand{\RRr}{\CMcal{R}}
\newcommand{\UUu}{\CMcal{U}}\newcommand{\WWw}{\CMcal{W}}\newcommand{\XXx}{\CMcal{X}}
\newcommand{\NnN}{\EuScript{N}}
\newcommand{\RrR}{\EuScript{R}}
\newcommand{\SsS}{\EuScript{S}}
\newcommand{\Bb}{\mathfrak{B}}\newcommand{\Dd}{\mathfrak{D}}
\newcommand{\bB}{\mathfrak{b}}
\newcommand{\mM}{\mathfrak{m}}
\newcommand{\SECT}[1]{\section{#1}\renewcommand{\theequation}{\thesection-\arabic{equation}}\setcounter{equation}{0}}
\newcounter{help}
\newcommand{\ITE}[3]{\ifthenelse{#1}{#2}{#3}}\newcommand{\ITEE}[3]{\ITE{\equal{#1}{#2}}{#3}{}}
\newcommand{\Ind}{\operatorname{Ind}}
\newcommand{\ind}{\operatorname{ind}}
\newcommand{\scalarr}{\langle\cdot,\mathrm{-}\rangle}
\newcommand{\scalar}[2]{\langle #1,#2\rangle}\newcommand{\leqsl}{\leqslant}\newcommand{\geqsl}{\geqslant}
\newcommand{\epsi}{\varepsilon}\newcommand{\dd}{\colon}
\newcommand{\tfcae}{the following conditions are equivalent:}
\newcommand{\THM}[1]{Theorem~\textup{\ref{thm:#1}}}
\newcommand{\COR}[1]{Corollary~\textup{\ref{cor:#1}}}
\newcommand{\LEM}[1]{Lemma~\textup{\ref{lem:#1}}}\newcommand{\PRO}[1]{Proposition~\textup{\ref{pro:#1}}}
\newcommand{\REM}[1]{Remark~\textup{\ref{rem:#1}}}
\newenvironment{thm}[1]{\begin{Thm}\label{thm:#1}}{\end{Thm}}\newenvironment{lem}[1]{\begin{Lem}\label{lem:#1}}{\end{Lem}}
\newenvironment{cor}[1]{\begin{Cor}\label{cor:#1}}{\end{Cor}}\newenvironment{pro}[1]{\begin{Pro}\label{pro:#1}}{\end{Pro}}
\newenvironment{dfn}[1]{\begin{Def}\label{def:#1}}{\end{Def}}
\newenvironment{exm}[1]{\begin{Exm}\label{exm:#1}}{\end{Exm}}
\newenvironment{rem}[1]{\begin{Rem}\label{rem:#1}}{\end{Rem}}
\newcommand{\bibITEM}[2]{\ITE{\equal{#2}{}}{\bibitem{#1} }{\bibitem[#2]{#1} }}
\newcommand{\BIB}[8]{%% article: hidden label, author, title, journal, number, year, pages, shown label
   \bibITEM{#1}{#8} #2, \textit{#3}, #4{} \textbf{#5} (#6), #7.}
\newcommand{\myBIB}[6][P. Niemiec]{#1, \textit{#2}, #3{}\ITE{\equal{#4}{}}{}{ \textbf{#4}} (#5), #6.}
\newcommand{\BIb}[6]{%% book: hidden label, author, title, publisher + city, year, shown label
   \bibITEM{#1}{#6} #2, \textit{#3}, #4, #5.}
\newcommand{\BiB}[9]{%% article in book: hid.label, author, title, in:, book title, publisher etc, year, pages, shown label
   \bibITEM{#1}{#9} #2, \textit{#3}, #4{} \textit{#5}, #6, #7, #8.}
\newcommand{\myBAPP}[3][P. Niemiec]{%% my paper to appear: [authors], title, comment
   #1, \textit{#2}, #3}
\newcommand{\jRN}[2][]{%%
   \ITEE{#2}{ActaM}{\ITE{\equal{#1}{+}}%%
      {Acta Mathematica}{Acta Math.}}%%
   \ITEE{#2}{ActaMSinES}{\ITE{\equal{#1}{+}}%%
      {Acta Mathematica Sinica (English Series)}{Acta Math. Sin. (Engl. Ser.)}}%%
   \ITEE{#2}{AdvM}{\ITE{\equal{#1}{+}}%%
      {Advances in Mathematics}{Adv. in Math.}}%%
   \ITEE{#2}{ACS}{\ITE{\equal{#1}{+}}%%
      {Applied Categorical Structures}{Appl. Categor. Struct.}}%%
   \ITEE{#2}{ActaSM}{\ITE{\equal{#1}{+}}%%
      {Acta Scientiarum Mathematicarum}{Acta Sci. Math.}}%%
   \ITEE{#2}{AmJM}{\ITE{\equal{#1}{+}}%%
      {American Journal of Mathematics}{Amer. J. Math.}}%%
   \ITEE{#2}{AmMMon}{\ITE{\equal{#1}{+}}%%
      {American Mathematical Monthly}{Amer. Math. Mon.}}%%
   \ITEE{#2}{AnnSciEcNormSupT}{\ITE{\equal{#1}{+}}%%
      {Annales Scientifiques de l'\'{E}cole Normale Sup\'{e}rieure (3)}{Ann. Sci. \'{E}c. Norm. Sup\'{e}r. (3)}}%%
   \ITEE{#2}{AnnM}{\ITE{\equal{#1}{+}}%%
      {Annals of Mathematics}{Ann. Math.}}%%
   \ITEE{#2}{AnnProb}{\ITE{\equal{#1}{+}}%%
      {The Annals of Probability}{Ann. Probab.}}%%
   \ITEE{#2}{AnnPALog}{\ITE{\equal{#1}{+}}%%
      {Annals of Pure and Applied Logic}{Ann. Pure Appl. Logic}}%%
   \ITEE{#2}{APM}{\ITE{\equal{#1}{+}}%%
      {Annales Polonici Mathematici}{Ann. Polon. Math.}}%%
   \ITEE{#2}{ArchM}{\ITE{\equal{#1}{+}}%%
      {Archiv der Mathematik}{Arch. Math.}}%%
   \ITEE{#2}{AttiAccLincRendNat}{\ITE{\equal{#1}{+}}%%
      {Atti della Accademia Nazionale dei Lincei. Rendiconti. Classe di Scienze Fisiche, Matematiche e Naturali}%%
      {Atti Accad. Naz. Lincei Rend. Cl. Sci. Fis. Mat. Nat.}}%%
   \ITEE{#2}{BAMS}{\ITE{\equal{#1}{+}}%%
      {Bulletin of the American Mathematical Society}{Bull. Amer. Math. Soc.}}%%
   \ITEE{#2}{BAustrMS}{\ITE{\equal{#1}{+}}%%
      {Bulletin of the Australian Mathematical Society}{Bull. Austral. Math. Soc.}}%%
   \ITEE{#2}{BLondMS}{\ITE{\equal{#1}{+}}%%
      {Bulletin of the London Mathematical Sociecy}{Bull. Lond. Math. Soc.}}%%
   \ITEE{#2}{BAPolSSSM}{\ITE{\equal{#1}{+}}%%
      {Bulletin de l'Acad\'{e}mie Polonaise des Sciences. S\'{e}rie des Sciences Math\'{e}matiques}%%
      {Bull. Acad. Pol. Sci. S\'{e}r. Sci. Math.}}%%
   \ITEE{#2}{BullSM}{\ITE{\equal{#1}{+}}%%
      {Bulletin des Sciences Math\'{e}matiques}{Bull. Sci. Math.}}%%
   \ITEE{#2}{BullPol}{\ITE{\equal{#1}{+}}%%
      {Bulletin of the Polish Academy of Sciences: Mathematics}{Bull. Pol. Acad. Sci. Math.}}%%
   \ITEE{#2}{CanadJM}{\ITE{\equal{#1}{+}}%%
      {Canadian Journal Mathematics}{Canad. J. Math.}}%%
   \ITEE{#2}{CollectM}{\ITE{\equal{#1}{+}}%%
      {Collectanea Mathematica}{Collect. Math.}}%%
   \ITEE{#2}{CMUC}{\ITE{\equal{#1}{+}}%%
      {Commentationes Mathematicae Universitatis Carolinae}{Comment. Math. Univ. Carolin.}}%%
   \ITEE{#2}{CRParis}{\ITE{\equal{#1}{+}}%%
      {C. R. Paris}{C. R. Paris}}%%
   \ITEE{#2}{CRASParis}{\ITE{\equal{#1}{+}}%%
      {Comptes Rendus de l'Acad\'{e}mie des Sciences. Paris}{C. R. Acad. Sci. Paris}}%%
   \ITEE{#2}{CEurJM}{\ITE{\equal{#1}{+}}%%
      {Central European Journal of Mathematics}{Cent. Eur. J. Math.}}%%
   \ITEE{#2}{CMHelv}{\ITE{\equal{#1}{+}}%%
      {Commentarii Mathematici Helvetici}{Comment. Math. Helv.}}%%
   \ITEE{#2}{CollM}{\ITE{\equal{#1}{+}}%%
      {Colloquium Mathematicum}{Coll. Math.}}%%
   \ITEE{#2}{ComposM}{\ITE{\equal{#1}{+}}%%
      {Compositio Mathematica}{Compos. Math.}}%%
   \ITEE{#2}{CzMJ}{\ITE{\equal{#1}{+}}%%
      {Czechoslovak Mathematical Journal}{Czech. Math. J.}}%%
   \ITEE{#2}{DissM}{\ITE{\equal{#1}{+}}%%
      {Dissertationes Mathematicae}{Dissert. Math.}}%%
   \ITEE{#2}{DANSSSR}{\ITE{\equal{#1}{+}}%%
      {Doklady Akademii Nauk SSSR}{Dokl. Akad. Nauk SSSR}}%%
   \ITEE{#2}{DukeMJ}{\ITE{\equal{#1}{+}}%%
      {Duke Mathematical Journal}{Duke Math. J.}}%%
   \ITEE{#2}{ELA}{\ITE{\equal{#1}{+}}%%
      {The Electronic Journal of Linear Algebra}{Electron. J. Linear Algebra}}%%
   \ITEE{#2}{ExtrM}{\ITE{\equal{#1}{+}}%%
      {Extracta Mathematicae}{Extracta Math.}}%%
   \ITEE{#2}{FM}{\ITE{\equal{#1}{+}}%%
      {Fundamenta Mathematicae}{Fund. Math.}}%%
   \ITEE{#2}{FAA}{\ITE{\equal{#1}{+}}%%
      {Functional Analysis and its Applications}{Funct. Anal. Appl.}}%%
   \ITEE{#2}{FunkAnalPril}{\ITE{\equal{#1}{+}}%%
      {Funktsional'ny\u{\i} Analiz i Ego Prilozheniya}{Funkts. Anal. Prilozh.}}%%
   \ITEE{#2}{GTopA}{\ITE{\equal{#1}{+}}%%
      {General Topology and its Applications}{General Topol. Appl.}}%%
   \ITEE{#2}{HJM}{\ITE{\equal{#1}{+}}%%
      {Houston Journal of Mathematics}{Houston J. Math.}}%%
   \ITEE{#2}{IllinoisJM}{\ITE{\equal{#1}{+}}%%
      {Illinois Journal of Mathematics}{Illinois J. Math.}}%%
   \ITEE{#2}{IndagMP}{\ITE{\equal{#1}{+}}%%
      {Indagationes Mathematicae (Proceedings)}{Indagationes Math. Proc.}}%%
   \ITEE{#2}{IndianaUMJ}{\ITE{\equal{#1}{+}}%%
      {Indiana University Mathematical Journal}{Indiana Univ. Math. J.}}%%
   \ITEE{#2}{InHauEtSPM}{\ITE{\equal{#1}{+}}%%
      {Inst. Hautes \'{E}tudes Sci. Publ. Math.}{Inst. Hautes \'{E}tudes Sci. Publ. Math.}}%%
   \ITEE{#2}{IEOT}{\ITE{\equal{#1}{+}}%%
      {Integral Equations and Operator Theory}{Integr. Equ. Oper. Theory}}%%
   \ITEE{#2}{IsraelJM}{\ITE{\equal{#1}{+}}%%
      {Israel Journal of Mathematics}{Israel J. Math.}}%%
   \ITEE{#2}{JAusMSA}{\ITE{\equal{#1}{+}}%%
      {Journal of the Australian Mathematical Society. Series A}{J. Aust. Math. Soc. Ser. A}}%%
   \ITEE{#2}{JCA}{\ITE{\equal{#1}{+}}%%
      {Journal of Convex Analysis}{J. Convex Anal.}}%%
   \ITEE{#2}{JChinUST}{\ITE{\equal{#1}{+}}%%
      {J. China Univ. Sci. Tech.}{J. China Univ. Sci. Tech.}}%%
   \ITEE{#2}{JFA}{\ITE{\equal{#1}{+}}%%
      {Journal of Functional Analysis}{J. Funct. Anal.}}%%
   \ITEE{#2}{JKoreanMS}{\ITE{\equal{#1}{+}}%%
      {Journal of the Korean Mathematical Society}{J. Korean Math. Soc.}}%%
   \ITEE{#2}{JMAnApp}{\ITE{\equal{#1}{+}}%%
      {J. Math. Anal. Appl.}{J. Math. Anal. Appl.}}%%
   \ITEE{#2}{JOT}{\ITE{\equal{#1}{+}}%%
      {Journal of Operator Theory}{J. Operator Theory}}%%
   \ITEE{#2}{KodaiMSemRep}{\ITE{\equal{#1}{+}}%%
      {Kodai Math. Sem. Rep.}{Kodai Math. Sem. Rep.}}%%
   \ITEE{#2}{LAA}{\ITE{\equal{#1}{+}}%%
      {Linear Algebra and its Applications}{Linear Algebra Appl.}}%%
   \ITEE{#2}{LMLA}{\ITE{\equal{#1}{+}}%%
      {Linear and Multilinear Algebra}{Linear Multilinear Algebra}}%%
   \ITEE{#2}{LNM}{\ITE{\equal{#1}{+}}%%
      {Lecture Notes in Mathematics}{Lecture Notes Math.}}%%
   \ITEE{#2}{MathJap}{\ITE{\equal{#1}{+}}%%
      {Math. Japon.}{Math. Japon.}}%%
   \ITEE{#2}{MLQ}{\ITE{\equal{#1}{+}}%%
      {Mathematical Logic Quarterly}{Math. Log. Q.}}%%
   \ITEE{#2}{MProcCambPhS}{\ITE{\equal{#1}{+}}%%
      {Mathematical Proceedings of the Cambridge Philosophical Society}{Math. Proc. Cambridge Phil. Soc.}}%%
   \ITEE{#2}{MMag}{\ITE{\equal{#1}{+}}%%
      {Mathematics Magazine}{Math. Mag.}}%%
   \ITEE{#2}{MSb}{\ITE{\equal{#1}{+}}%%
      {Matematicheski\u{\i} Sbornik}{Mat. Sb.}}%%
   \ITEE{#2}{MStud}{\ITE{\equal{#1}{+}}%%
      {Matematychni Studi\"{\i}}{Mat. Stud.}}%%
   \ITEE{#2}{MScand}{\ITE{\equal{#1}{+}}%%
      {Mathematica Scandinavica}{Math. Scand.}}%%
   \ITEE{#2}{MAnn}{\ITE{\equal{#1}{+}}%%
      {Mathematische Annalen}{Math. Ann.}}%%
   \ITEE{#2}{MAMS}{\ITE{\equal{#1}{+}}%%
      {Memoirs of the American Mathematical Society}{Mem. Amer. Math. Soc.}}%%
   \ITEE{#2}{MichMJ}{\ITE{\equal{#1}{+}}%%
      {Michigan Mathematical Journal}{Mich. Math. J.}}%%
   \ITEE{#2}{MonatM}{\ITE{\equal{#1}{+}}%%
      {Monatshefte f\"{u}r Mathematik}{Mh. Math.}}%%
   \ITEE{#2}{NonlinA}{\ITE{\equal{#1}{+}}%%
      {Nonlinear Analysis: Theory, Methods \& Applications}{Nonlinear Anal.}}%%
   \ITEE{#2}{NAMS}{\ITE{\equal{#1}{+}}%%
      {Notices of the American Mathematical Society}{Notices Amer. Math. Soc.}}%%
   \ITEE{#2}{OpusM}{\ITE{\equal{#1}{+}}%%
      {Opuscula Mathematica}{Opuscula Math.}}%%
   \ITEE{#2}{PacJM}{\ITE{\equal{#1}{+}}%%
      {Pacific Journal of Mathematics}{Pacific J. Math.}}%%
   \ITEE{#2}{PeriodMHung}{\ITE{\equal{#1}{+}}%%
      {Periodica Mathematica Hungarica}{Period. Math. Hungarica}}%%
   \ITEE{#2}{PAMS}{\ITE{\equal{#1}{+}}%%
      {Proceedings of the American Mathematical Society}{Proc. Amer. Math. Soc.}}%%
   \ITEE{#2}{ProcCambPhS}{\ITE{\equal{#1}{+}}%%
      {Proceedings of the Cambridge Philosophical Society}{Proc. Cambridge Phil. Soc.}}%%
   \ITEE{#2}{ProcImpAcadTokyo}{\ITE{\equal{#1}{+}}%%
      {Proc. Imp. Acad. Tokyo}{Proc. Imp. Acad. Tokyo}}%%
   \ITEE{#2}{ProcKonink}{\ITE{\equal{#1}{+}}%%
      {Proceedings of the Koninklijke Nederlandse Akademie van Wetenschappen}{Nederl. Akad. Wetensch. Proc. Ser. A}}%%
   \ITEE{#2}{PLondMS}{\ITE{\equal{#1}{+}}%%
      {Proceedings of the London Mathematical Society}{Proc. London Math. Soc.}}%%
   \ITEE{#2}{PNatlUSA}{\ITE{\equal{#1}{+}}%%
      {Proceedings of the National Academy of Sciences of the United States of America}{Proc. Natl. Acad. Sci. USA}}%%
   \ITEE{#2}{PublRIMSKyoto}{\ITE{\equal{#1}{+}}%%
      {Publ. Res. Inst. Math. Sci. Kyoto Univ.}{Publ. Res. Inst. Math. Sci.}}%%
   \ITEE{#2}{PWN}{\ITE{\equal{#1}{+}}%%
      {PWN -- Polish Scientific Publishers, Warszawa}{PWN -- Polish Scientific Publishers, Warszawa}}%%
   \ITEE{#2}{RCMP}{\ITE{\equal{#1}{+}}%%
      {Rendiconti del Circolo Matematico di Palermo}{Rend. Circ. Mat. Palermo}}%%
   \ITEE{#2}{RussMS}{\ITE{\equal{#1}{+}}%%
      {Russian Mathematical Surveys}{Russian Math. Surveys}}%%
   \ITEE{#2}{SbM}{\ITE{\equal{#1}{+}}%%
      {Sbornik: Mathematics}{Sb. Math.}}%%
   \ITEE{#2}{SciRepTokyoA}{\ITE{\equal{#1}{+}}%%
      {Science Reports of Tokyo Kyoiku Daigaku, Section A}{Sci. Rep. Tokyo Kyoiku Daigaku Sect. A}}%%
   \ITEE{#2}{SeminProbStras}{\ITE{\equal{#1}{+}}%%
      {S\'{e}minaire de probabilit\'{e}s de Strasbourg}{S\'{e}min. Prob. Strasbourg}}%%
   \ITEE{#2}{SIAMJMAA}{\ITE{\equal{#1}{+}}%%
      {SIAM Journal on Matrix Analysis and Applications}{SIAM J. Matrix Anal. Appl.}}%%
   \ITEE{#2}{SibirMZ}{\ITE{\equal{#1}{+}}%%
      {Sibirski\v{\i} Mat. \v{Z}hurnal}{Sibirsk. Mat. \v{Z}.}}%%
   \ITEE{#2}{SM}{\ITE{\equal{#1}{+}}%%
      {Studia Mathematica}{Studia Math.}}%%
   \ITEE{#2}{TAMS}{\ITE{\equal{#1}{+}}%%
      {Transactions of the American Mathematical Society}{Trans. Amer. Math. Soc.}}%%
   \ITEE{#2}{TohokuMJ}{\ITE{\equal{#1}{+}}%%
      {T\^{o}hoku Mathematical Journal}{T\^{o}hoku Math. J.}}%%
   \ITEE{#2}{TomskUnivRev}{\ITE{\equal{#1}{+}}%%
      {Tomsk Universitet Review}{Tomsk. Univ. Rev.}}%%
   \ITEE{#2}{TopA}{\ITE{\equal{#1}{+}}%%
      {Topology and its Applications}{Topology Appl.}}%%
   \ITEE{#2}{TopMethNA}{\ITE{\equal{#1}{+}}%%
      {Topological Methods in Nonlinear Analysis}{Topol. Methods Nonlinear Anal.}}%%
   \ITEE{#2}{TsukubaJM}{\ITE{\equal{#1}{+}}%%
      {Tsukuba Journal of Mathematics}{Tsukuba J. Math.}}%%
   \ITEE{#2}{UspekhiMN}{\ITE{\equal{#1}{+}}%%
      {Uspekhi Matem. Nauk}{Uspekhi Mat. Nauk}}%%
   }
\newcommand{\paplist}[3][]{%%
   \ITEE{#3}{NIAkhiezer,IMGlazman1993}{%%
      \BIb{#2}{N.I. Akhiezer and I.M. Glazman}%%
         {Theory of Linear Operators in Hilbert Space}%%
         {Dover Publications, Inc., New York}{1993}{#1}}%%
   \ITEE{#3}{RDAnderson1967}{%%
      \BIB{#2}{R.D. Anderson}%%
         {On topological infinite deficiency}%%
         {\jRN{MichMJ}}{14}{1967}{365--383}{#1}}%%
   \ITEE{#3}{RDAnderson,JMcCharen1970}{%%
      \BIB{#2}{R.D. Anderson and J. McCharen}%%
         {On extending homeomorphisms to Fr\'{e}chet manifolds}%%
         {\jRN{PAMS}}{25}{1970}{283--289}{#1}}%%
   \ITEE{#3}{RDAnderson,DWCurtis,JVanMill1982}{%%
      \BIB{#2}{R.D. Anderson, D.W. Curtis, J. van Mill}%%
         {A fake topological Hilbert space}%%
         {\jRN{TAMS}}{272}{1982}{311--321}{#1}}%%
   \ITEE{#3}{RArens,JEells1956}{%%
      \BIB{#2}{R. Arens and J. Eells}%%
         {On embedding uniform and topological spaces}%%
         {\jRN{PacJM}}{6}{1956}{397--403}{#1}}%%
   \ITEE{#3}{NAronszajn,PPanitchpakdi1956}{%%
      \BIB{#2}{N. Aronszajn and P. Panitchpakdi}%%
         {Extension of uniformly continuous transformations and hyperconvex metric spaces}%%
         {\jRN{PacJM}}{6}{1956}{405--439}{#1}}%%
   \ITEE{#3}{KJBabenko1948}{%%
      \BIB{#2}{K.J. Babenko}%%
         {On conjugate functions}%%
         {\jRN{DANSSSR}}{62}{1948}{157--160}{#1}}%%
   \ITEE{#3}{TBanakh1995}{%%
      \BIB{#2}{T.O. Banakh}%%
         {Topology of spaces of probability measures, I}%%
         {\jRN{MStud}}{5}{1995}{65--87 (Russian)}{#1}}%%
   \ITEE{#3}{TBanakh1995a}{%%
      \BIB{#2}{T.O. Banakh}%%
         {Topology of spaces of probability measures, II}%%
         {\jRN{MStud}}{5}{1995}{88--106 (Russian)}{#1}}%%
   \ITEE{#3}{TBanakh1998}{%%
      \BIB{#2}{T. Banakh}%%
         {Characterization of spaces admitting a homotopy dense embedding into a Hilbert manifold}%%
         {\jRN{TopA}}{86}{1998}{123--131}{#1}}%%
   \ITEE{#3}{TBanakh,TNRadul1997}{%%
      \BIB{#2}{T.O. Banakh and T.N. Radul}%%
         {Topology of spaces of probability measures}%%
         {\jRN{SbM}}{188}{1997}{973--995}{#1}}%%
   \ITEE{#3}{TBanakh,TRadul,MZarichnyi1996}{%%
      \BIb{#2}{T. Banakh, T. Radul, M. Zarichnyi}%%
         {Absorbing sets in infinite-dimensional manifolds}%%
         {VNTL Publishers, Lviv}{1996}{#1}}%%
   \ITEE{#3}{TBanakh,IZarichnyy2008}{%%
      \BIB{#2}{T. Banakh and I. Zarichnyy}%%
         {Topological groups and convex sets homeomorphic to non-separable Hilbert spaces}%%
         {\jRN{CEurJM}}{6}{2008}{77--86}{#1}}%%
   \ITEE{#3}{HBecker,ASKechris1996}{%%
      \BIb{#2}{H. Becker and A.S. Kechris}%%
         {The Descriptive Set Theory of Polish Group Actions \textup{(London Math. Soc. Lecture Note Series, vol. 232)}}%%
         {University Press, Cambridge}{1996}{#1}}%%
   \ITEE{#3}{GBeer1993}{%%
      \BIb{#2}{G. Beer}%%
         {Topologies on Closed and Closed Convex Sets \textup{(Mathematics and Its Applications)}}%%
         {Kluwer Academic Publishers, Dordrecht}{1993}{#1}}%%
   \ITEE{#3}{NEBenamara,NNikolski1999}{%%
      \BIB{#2}{N.E. Benamara and N. Nikolski}%%
         {Resolvent tests for similarity to a normal operator}%%
         {\jRN{PLondMS}}{78}{1999}{585--626}{#1}}%%
   \ITEE{#3}{YBenyamini,JLindenstrauss2000}{%%
      \BIb{#2}{Y. Benyamini and J. Lindenstrauss}%%
         {Geometric nonlinear functional analysis I}%%
         {AMS Colloquium Publications 48}{2000}{#1}}%%
   \ITEE{#3}{SKBerberian1974}{%%
      \BIb{#2}{S.K. Berberian}%%
         {Lectures in Functional Analysis and Operator Theory}%%
         {Graduate Texts in Mathematics 15, Springer-Verlag, New York}{1974}{#1}}%%
   \ITEE{#3}{SNBernstein1954}{%%
      \BIb{#2}{S.N. Bernstein}%%
         {Collected Works II}%%
         {Akad. Nauk SSSR, Moscow}{1954 (Russian)}{#1}}%%
   \ITEE{#3}{CzBessaga,APelczynski1972}{%%
      \BIB{#2}{Cz. Bessaga and A. Pe\l{}czy\'{n}ski}%%
         {On spaces of measurable functions}%%
         {\jRN{SM}}{44}{1972}{597--615}{#1}}%%
   \ITEE{#3}{CzBessaga,APelczynski1975}{%%
      \BIb{#2}{Cz. Bessaga and A. Pe\l{}czy\'{n}ski}%%
         {Selected topics in infinite-dimensional topology}%%
         {\jRN{PWN}}{1975}{#1}}%%
   \ITEE{#3}{MBestvina,JMogilski1986}{%%
      \BIB{#2}{M. Bestvina and J. Mogilski}%%
         {Characterizing certain incomplete infinite-dimensional absolute retracts}%%
         {\jRN{MichMJ}}{33}{1986}{291--313}{#1}}%%
   \ITEE{#3}{MBestvina,PBowers,JMogilsky,JWalsh1986}{%%
      \BIB{#2}{M. Bestvina, P. Bowers, J. Mogilsky, J. Walsh}%%
         {Characterization of Hilbert space manifolds revisited}%%
         {\jRN{TopA}}{24}{1986}{53--69}{#1}}%%
   \ITEE{#3}{RBhatia1997}{%%
      \BIb{#2}{R. Bhatia}%%
         {Matrix Analysis}%%
         {Springer, New York}{1997}{#1}}%%
   \ITEE{#3}{GBirkhoff1936}{%%
      \BIB{#2}{G. Birkhoff}%%
         {A note on topological groups}%%
         {\jRN{ComposM}}{3}{1936}{427--430}{#1}}%%
   \ITEE{#3}{MSBirman,MZSolomjak1987}{%%
      \BIb{#2}{M.S. Birman and M.Z. Solomjak}%%
         {Spectral Theory of Self-Adjoint Operators in Hilbert Space}%%
         {D. Reidel Publishing Co., Dordrecht}{1987}{#1}}%%
   \ITEE{#3}{EBishop1961}{%%
      \BIB{#2}{E. Bishop}%%
         {A generalization of the Stone-Weierstrass theorem}%%
         {\jRN{PacJM}}{11}{1961}{777--783}{#1}}%%
   \ITEE{#3}{BBlackadar2006}{%%
      \BIb{#2}{B. Blackadar}{Operator Algebras. Theory of $\CCc^*$-algebras and von Neumann algebras %%
         \textup{(Encyclopaedia of Mathematical Sciences, vol. 122: Operator Algebras and Non-Commutative Geometry III)}}%%
         {Springer-Verlag, Berlin-Heidelberg}{2006}{#1}}%%
   \ITEE{#3}{JBlass,WHolsztynski1972}{%%
      \BIB{#2}{J. Blass and W. Holszty\'{n}ski}%%
         {Cubical polyhedra and homotopy III}%%
         {\jRN{AttiAccLincRendNat}}{53}{1972}{275--279}{#1}}%%
   \ITEE{#3}{FFBonsall,NJDuncan1973}{%%
      \BIb{#2}{F.F. Bonsall and N.J. Duncan}%%
         {Complete Normed Algebras}%%
         {Springer Verlag, Berlin}{1973}{#1}}%%
   \ITEE{#3}{NBourbaki2002}{%%
      \BIb{#2}{N. Bourbaki}%%
         {Lie Groups and Lie Algebras, Chapters 4--6}%%
         {Springer, New York}{2002}{#1}}%%
   \ITEE{#3}{PLBowers1989}{%%
      \BIB{#2}{P.L. Bowers}%%
         {Limitation topologies on function spaces}%%
         {\jRN{TAMS}}{314}{1989}{421--431}{#1}}%%
   \ITEE{#3}{ABrown1953}{%%
      \BIB{#2}{A. Brown}%%
         {On a class of operators}%%
         {\jRN{PAMS}}{4}{1953}{723--728}{#1}}%%
   \ITEE{#3}{ABrown,CKFong,DWHadwin1978}{%%
      \BIB{#2}{A. Brown, C.-K. Fong, D.W. Hadwin}%%
         {Parts of operators on Hilbert space}%%
         {\jRN{IllinoisJM}}{22}{1978}{306--314}{#1}}%%
   \ITEE{#3}{AMBruckner,JBBruckner,BSThomson1997}{%%
      \BIb{#2}{A.M. Bruckner, J.B. Bruckner, B.S. Thomson}%%
         {Real Analysis}%%
         {Prentice-Hall, New Jersey}{1997}{#1}}%%
   \ITEE{#3}{PJCameron,AMVershik2006}{%%
      \BIB{#2}{P.J. Cameron and A.M. Vershik}%%
         {Some isometry groups of Urysohn space}%%
         {\jRN{AnnPALog}}{143}{2006}{70--78}{#1}}%%
   \ITEE{#3}{CCastaing1966}{%%
      \BIB{#2}{C. Castaing}%%
         {Quelques probl\`{e}mes de mesurabilit\'{e} li\'{e}es \`{a} la th\'{e}orie de la commande}%%
         {\jRN{CRParis}}{262}{1966}{409--411}{#1}}%%
   \ITEE{#3}{JAVanCasteren1980}{%%
      \BIB{#2}{J.A. van Casteren}%%
         {A problem of Sz.-Nagy}%%
         {\jRN{ActaSM}}{42}{1980}{189--194}{#1}}%%
   \ITEE{#3}{JAVanCasteren1983}{%%
      \BIB{#2}{J.A. van Casteren}%%
         {Operators similar to unitary or selfadjoint ones}%%
         {\jRN{PacJM}}{104}{1983}{241--255}{#1}}%%
   \ITEE{#3}{XCatepillan,MPtak,WSzymanski1994}{%%
      \BIB{#2}{X. Catepill\'{a}n, M. Ptak, W. Szyma\'{n}ski}%%
         {Multiple canonical decompositions of families of operators and a model of quasinormal families}%%
         {\jRN{PAMS}}{121}{1994}{1165--1172}{#1}}%%
   \ITEE{#3}{RCauty1994}{%%
      \BIB{#2}{R. Cauty}%%
         {Un espace m\'{e}trique lin\'{e}aire qui n'est pas un r\'{e}tracte absolu}%%
         {\jRN{FM}}{146}{1994}{85--99, (French)}{#1}}%%
   \ITEE{#3}{TAChapman1971}{%%
      \BIB{#2}{T.A. Chapman}%%
         {Deficiency in infinite-dimensional manifolds}%%
         {\jRN{GTopA}}{1}{1971}{263--272}{#1}}%%
   \ITEE{#3}{TAChapman1976}{%%
      \BIb{#2}{T.A. Chapman}%%
         {Lectures on Hilbert cube manifolds}%%
         {C.B.M.S. Regional Conference Series in Math. No 28, Amer. Math. Soc.}{1976}{#1}}%%
   \ITEE{#3}{RBChuaqui1977}{%%
      \BIB{#2}{R.B. Chuaqui}%%
         {Measures invariant under a group of transformations}%%
         {\jRN{PacJM}}{68}{1977}{313--329}{#1}}%%
   \ITEE{#3}{JBConway1985}{%%
      \BIb{#2}{J.B. Conway}%%
         {A Course in Functional Analysis}%%
         {Springer-Verlag, New York}{1985}{#1}}%%
   \ITEE{#3}{JBConway2000}{%%
      \BIb{#2}{J.B. Conway}%%
         {A Course in Operator Theory}%%
         {(Graduate Studies in Mathematics, vol. 21) Amer. Math. Soc., Providence}{2000}{#1}}%%
   \ITEE{#3}{GCorach,AMaestripieri,MMbekhta2009}{%%
      \BIB{#2}{G. Corach, A. Maestripieri, M. Mbekhta}%%
         {Metric and homogeneous structure of closed range operators}%%
         {\jRN{JOT}}{61}{2009}{171--190}{#1}}%%
   \ITEE{#3}{MJCowen,RGDouglas1978}{%%
      \BIB{#2}{M.J. Cowen and R.G. Douglas}%%
         {Complex geometry and operator theory}%%
         {\jRN{ActaM}}{141}{1978}{187--261}{#1}}%%
   \ITEE{#3}{DWCurtis1985}{%%
      \BIB{#2}{D.W. Curtis}%%
         {Boundary sets in the Hilbert cube}%%
         {\jRN{TopA}}{20}{1985}{201--221}{#1}}%%
   \ITEE{#3}{MMDay1958}{%%
      \BIb{#2}{M.M. Day}%%
         {Normed Linear Spaces}%%
         {Springer Verlag, Berlin}{1958}{#1}}%%
   \ITEE{#3}{CDellacherie1967}{%%
      \BIB{#2}{C. Dellacherie}%%
         {Un compl\'{e}ment au th\'{e}or\`{e}me de Weierstrass-Stone}%%
         {\jRN{SeminProbStras}}{1}{1967}{52--53}{#1}}%%
   \ITEE{#3}{JJDijkstra1987}{%%
      \BIB{#2}{J.J. Dijkstra}%%
         {Strong negligibility of $\sigma$-compacta does not characterize Hilbert space}%%
         {\jRN{PacJM}}{127}{1987}{19--30}{#1}}%%
   \ITEE{#3}{JJDijkstra1990}{%%
      \BIB{#2}{J.J. Dijkstra}%%
         {Characterizing Hilbert space topology in terms of strong negligibility}%%
         {\jRN{ComposM}}{75}{1990}{299--306}{#1}}%%
   \ITEE{#3}{TDobrowolski,WMarciszewski2002}{%%
      \BIB{#2}{T. Dobrowolski and W. Marciszewski}%%
         {Failure of the Factor Theorem for Borel pre-Hilbert spaces}%%
         {\jRN{FM}}{175}{2002}{53--68}{#1}}%%
   \ITEE{#3}{TDobrowolski,JMogilski1990}{%%
      \BiB{#2}{T. Dobrowolski and J. Mogilski}%%
         {Problems on Topological Classification of Incomplete Metric Spaces}{Chapter 25 in:}%%
         {Open Problems in Topology}{J. van Mill and G.M. Reed (eds.), North-Holland Amsterdam}{1990}{411--429}{#1}}%%
   \ITEE{#3}{TDobrowolski,HTorunczyk1981}{%%
      \BIB{#2}{T. Dobrowolski and H. Toru\'{n}czyk}%%
         {Separable complete ANR's admitting a group structure are Hilbert manifolds}%%
         {\jRN{TopA}}{12}{1981}{229--235}{#1}}%%
   \ITEE{#3}{RGDouglas1966}{%%
      \BIB{#2}{R.G. Douglas}%%
         {On majorization, factorization and range inclusion of operators in Hilbert space}%%
         {\jRN{PAMS}}{17}{1966}{413--416}{#1}}%%
   \ITEE{#3}{CHDowker1947}{%%
      \BIB{#2}{C.H. Dowker}%%
         {Mapping theorems for non-compact spaces}%%
         {\jRN{AmJM}}{69}{1947}{200--242}{#1}}%%
   \ITEE{#3}{CHDowker1952}{%%
      \BIB{#2}{C.H. Dowker}%%
         {Topology of metric complexes}%%
         {\jRN{AmJM}}{74}{1952}{555--577}{#1}}%%
   \ITEE{#3}{JDugundji1951}{%%
      \BIB{#2}{J. Dugundji}%%
         {An extension of Tietze's theorem}%%
         {\jRN{PacJM}}{1}{1951}{353--367}{#1}}%%
   \ITEE{#3}{JDugundji1958}{%%
      \BIB{#2}{J. Dugundji}%%
         {Absolute neighborhood retracts and local connectedness for arbitrary metric spaces}%%
         {\jRN{ComposM}}{13}{1958}{229--246}{#1}}%%
   \ITEE{#3}{JDugundji1965}{%%
      \BIB{#2}{J. Dugundji}%%
         {Locally equiconnected spaces and absolute neighborhood retracts}%%
         {\jRN{FM}}{57}{1965}{187--193}{#1}}%%
   \ITEE{#3}{NDunford,JTSchwartz1958}{%%
      \BIb{#2}{N. Dunford and J.T. Schwartz}%%
         {Linear Operators, part I}%%
         {Interscience Publishers, New York}{1958}{#1}}%%
   \ITEE{#3}{NDunford,JTSchwartz1963}{%%
      \BIb{#2}{N. Dunford and J.T. Schwartz}%%
         {Linear Operators, part II}%%
         {Interscience Publishers, New York}{1963}{#1}}%%
   \ITEE{#3}{NDunford,JTSchwartz1971}{%%
      \BIb{#2}{N. Dunford and J.T. Schwartz}%%
         {Linear Operators, part III}%%
         {Wiley-Interscience, New York}{1971}{#1}}%%
   \ITEE{#3}{MLEaton,MDPerlman1977}{%%
      \BIB{#2}{M.L. Eaton and M.D. Perlman}%%
         {Reflection groups, generalized Schur functions and the geometry of majorization}%%
         {\jRN{AnnProb}}{5}{1977}{829--860}{#1}}%%
   \ITEE{#3}{EGEffros1965}{%%
      \BIB{#2}{E.G. Effros}%%
         {The Borel space of von Neumann algebras on a separable Hilbert space}%%
         {\jRN{PacJM}}{15}{1965}{1153--1164}{#1}}%%
   \ITEE{#3}{EGEffros1966}{%%
      \BIB{#2}{E.G. Effros}%%
         {Global structure in von Neumann algebras}%%
         {\jRN{TAMS}}{121}{1966}{434--454}{#1}}%%
   \ITEE{#3}{REspinola,MAKhamsi2001}{%%
      \BiB{#2}{R. Espinola and M.A. Khamsi}%%
         {Introduction to hyperconvex spaces}{Chapter XIII in:}{Handbook of Metric Fixed Point Theory}%%
         {W.A. Kirk and B. Sims (editors), Kluwer Academic Publishers}{2001}{391--435}{#1}}%%
   \ITEE{#3}{PAFillmore,JPWilliams1971}{%%
      \BIB{#2}{P.A. Fillmore and J.P. Williams}%%
         {On operator ranges}%%
         {\jRN{AdvM}}{7}{1971}{254--281}{#1}}%%
   \ITEE{#3}{JEells,NHKuiper1969}{%%
      \BIB{#2}{J. Eells and N.H. Kuiper}%%
         {Homotopy negligible subsets in infinite-dimensional manifolds}%%
         {\jRN{ComposM}}{21}{1969}{151--161}{#1}}%%
   \ITEE{#3}{REngelking1977}{%%
      \BIb{#2}{R. Engelking}%%
         {General Topology}%%
         {\jRN{PWN}}{1977}{#1}}%%
   \ITEE{#3}{REngelking1978}{%%
      \BIb{#2}{R. Engelking}%%
         {Dimension Theory}%%
         {\jRN{PWN}}{1978}{#1}}%%
   \ITEE{#3}{REngelking1989}{%%
      \BIb{#2}{R. Engelking}%%
         {General Topology. Revised and completed edition \textup{(Sigma series in pure mathematics, vol. 6)}}%%
         {Heldermann Verlag, Berlin}{1989}{#1}}%%
   \ITEE{#3}{PErdos,RDMauldin1976}{%%
      \BIB{#2}{P. Erd\"{o}s and R.D. Mauldin}%%
         {The nonexistence of certain invariant measures}%%
         {\jRN{PAMS}}{59}{1976}{321--322}{#1}}%%
   \ITEE{#3}{JErnest1976}{%%
      \BIB{#2}{J. Ernest}%%
         {Charting the operator terrain}%%
         {\jRN{MAMS}}{171}{1976}{207 pp}{#1}}%%
   \ITEE{#3}{RHFox1943}{%%
      \BIB{#2}{R.H. Fox}%%
         {On fiber spaces, II}%%
         {\jRN{BAMS}}{49}{1943}{733--735}{#1}}%%
   \ITEE{#3}{NAFriedman1970}{%%
      \BIb{#2}{N.A. Friedman}%%
         {Introduction to ergodic theory}%%
         {Van Nostrand Reinhold Company}{1970}{#1}}%%
   \ITEE{#3}{MFujii,MKajiwara,YKato,FKubo1976}{%%
      \BIB{#2}{M. Fujii, M. Kajiwara, Y. Kato, F. Kubo}%%
         {Decompositions of operators in Hilbert spaces}%%
         {\jRN{MathJap}}{21}{1976}{117--120}{#1}}%%
   \ITEE{#3}{SGao,ASKechris2003}{%%
      \BIB{#2}{S. Gao and A.S. Kechris}%%
         {On the classification of Polish metric spaces up to isometry}%%
         {\jRN{MAMS}}{161}{2003}{viii+78}{#1}}%%
   \ITEE{#3}{MIGarrido,FMontalvo1991}{%%
      \BIB{#2}{M.I. Garrido and F. Montalvo}%%
         {On some generalizations of the Kakutani-Stone and Stone-Weierstrass theorems}%%
         {\jRN{ExtrM}}{6}{1991}{156--159}{#1}}%%
   \ITEE{#3}{LGe,JShen2002}{%%
      \BIB{#2}{L. Ge and J. Shen}%%
         {Generator problem for certain property T factors}%%
         {\jRN{PNAS}}{99}{2002}{565--567}{#1}}%%
   \ITEE{#3}{IMGelfand,MANaimark1943}{%%
      \BIB{#2}{I.M. Gelfand and M.A. Naimark}%%
         {On the embedding of normed rings into the ring of operators in Hilbert space}%%
         {\jRN{MSb}}{12}{1943}{197--213}{#1}}%%
   \ITEE{#3}{FGesztesy,MMalamud,MMitrea,SNaboko2009}{%%
      \BIB{#2}{F. Gesztesy, M. Malamud, M. Mitrea, S. Naboko}%%
         {Generalized polar decompositions for closed operators in Hilbert spaces and some applications}%%
         {\jRN{IEOT}}{64}{2009}{83--113}{#1}}%%
   \ITEE{#3}{LGillman,MJerison1960}{%%
      \BIb{#2}{L. Gillman and M. Jerison}%%
         {Rings of continuous functions}%%
         {New York}{1960}{#1}}%%
   \ITEE{#3}{JGlimm1960}{%%
      \BIB{#2}{J. Glimm}%%
         {A Stone-Weierstrass theorem for $\CCc^*$-algebras}%%
         {\jRN{AnnM}}{72}{1960}{216--244}{#1}}%%
   \ITEE{#3}{GGodefroy,NJKalton2003}{%%
      \BIB{#2}{G. Godefroy and N.J. Kalton}%%
         {Lipschitz-free Banach spaces}%%
         {\jRN{SM}}{159}{2003}{121--141}{#1}}%%
   \ITEE{#3}{ICGohberg,MGKrein1967}{%%
      \BIB{#2}{I.C. Gohberg and M.G. Krein}%%
         {On a description of contraction operators similar to unitary ones}%%
         {\jRN{FunkAnalPril}}{1}{1967}{38--60}{#1}}%%
   \ITEE{#3}{ELGriffinJr1953}{%%
      \BIB{#2}{E.L. Griffin Jr.}%%
         {Some contributions to the theory of rings of operators}%%
         {\jRN{TAMS}}{75}{1953}{471--504}{#1}}%%
   \ITEE{#3}{ELGriffinJr1955}{%%
      \BIB{#2}{E.L. Griffin Jr.}%%
         {Some contributions to the theory of rings of operators II}%%
         {\jRN{TAMS}}{79}{1955}{389--400}{#1}}%%
   \ITEE{#3}{MGromov1981}{%%
      \BIB{#2}{M. Gromov}%%
         {Groups of polynomial growth and expanding maps}%%
         {\jRN{InHauEtSPM}}{53}{1981}{53--73}{#1}}%%
   \ITEE{#3}{MGromov1999}{%%
      \BIb{#2}{M. Gromov}%%
         {Metric Structures for Riemannian and Non-Riemannian Spaces}%%
         {Progress in Math. \textbf{152}, Birkh\"{a}user}{1999}{#1}}%%
   \ITEE{#3}{JDeGroot1956}{%%
      \BIB{#2}{J. de Groot}%%
         {Non-archimedean metrics in topology}%%
         {\jRN{PAMS}}{7}{1956}{948--953}{#1}}%%
   \ITEE{#3}{LCGrove,CTBenson1985}{%%
      \BIb{#2}{L.C. Grove and C.T. Benson}%%
         {Finite Reflection Group}%%
         {2nd ed., Springer-Verlag}{1985}{#1}}%%
   \ITEE{#3}{VIGurarii1966}{%%
      \BIB{#2}{V.I. Gurari\v{\i}}{Spaces of universal placement, isotropic spaces and a problem of Mazur %%
         on rotations of Banach spaces \textup{(Russian)}}%%
         {\jRN{SibirMZ}}{7}{1966}{1002--1013}{#1}}%%
   \ITEE{#3}{DWHadwin1976}{%%
      \BIB{#2}{D.W. Hadwin}%%
         {An operator-valued spectrum}%%
         {\jRN{NAMS}}{23}{1976}{A-163}{#1}}%%
   \ITEE{#3}{DWHadwin1977}{%%
      \BIB{#2}{D.W. Hadwin}%%
         {An operator-valued spectrum}%%
         {\jRN{IndianaUMJ}}{26}{1977}{329--340}{#1}}%%
   \ITEE{#3}{HHahn1932}{%%
      \BIb{#2}{H. Hahn}%%
         {Reelle Funktionen I}%%
         {Leipzig}{1932}{#1}}%%
   \ITEE{#3}{PRHalmos1950}{%%
      \BIb{#2}{P.R. Halmos}%%
         {Measure theory}%%
         {Van Nostrand, New York}{1950}{#1}}%%
   \ITEE{#3}{PRHalmos1951}{%%
      \BIb{#2}{P.R. Halmos}%%
         {Introduction to Hilbert Space and the Theory of Spectral Multiplicity}%%
         {Chelsea Publishing Company, New York}{1951}{#1}}%%
   \ITEE{#3}{PRHalmos1956}{%%
      \BIb{#2}{P.R. Halmos}%%
         {Lectures on Ergodic Theory}%%
         {Publ. Math. Soc. Japan, Tokyo}{1956}{#1}}%%
   \ITEE{#3}{PRHalmos1982}{%%
      \BIb{#2}{P.R. Halmos}%%
         {A Hilbert Space Problem Book}%%
         {Springer-Verlag New York Inc.}{1982}{#1}}%%
  \ITEE{#3}{PRHalmos,JEMcLaughlin1963}{%%
      \BIB{#2}{P.R. Halmos and J.E. McLaughlin}%%
         {Partial isometries}%%
         {\jRN{PacJM}}{13}{1963}{585--596}{#1}}%%
   \ITEE{#3}{RWHansell1972}{%%
      \BIB{#2}{R.W. Hansell}%%
         {On the nonseparable theory of Borel and Souslin sets}%%
         {\jRN{BAMS}}{78}{1972}{236--241}{#1}}%%
   \ITEE{#3}{FHausdorff1930}{%%
      \BIB{#2}{F. Hausdorff}%%
         {Erweiterung einer Hom\"{o}omorphie}%%
         {\jRN{FM}}{16}{1930}{353--360}{#1}}%%
   \ITEE{#3}{FHausdorff1934}{%%
      \BIB{#2}{F. Hausdorff}%%
         {\"{U}ber innere Abbildungen}%%
         {\jRN{FM}}{23}{1934}{279--291}{#1}}%%
   \ITEE{#3}{FHausdorff1938}{%%
      \BIB{#2}{F. Hausdorff}%%
         {Erweiterung einer stetigen Abbildung}%%
         {\jRN{FM}}{30}{1938}{40--47}{#1}}%%
   \ITEE{#3}{DWHenderson1971}{%%
      \BIB{#2}{D.W. Henderson}%%
         {Corrections and extensions of two papers about infinite-dimensional manifolds}%%
         {\jRN{GTopA}}{1}{1971}{321--327}{#1}}%%
   \ITEE{#3}{DWHenderson1975}{%%
      \BIB{#2}{D.W. Henderson}%%
         {$Z$-sets in ANR's}%%
         {\jRN{TAMS}}{213}{1975}{205--216}{#1}}%%
   \ITEE{#3}{DWHenderson,RMSchori1970}{%%
      \BIB{#2}{D.W. Henderson and R.M. Schori}%%
         {Topological classification of infinite-dimensional manifolds by homotopy type}%%
         {\jRN{BAMS}}{76}{1970}{121--124}{#1}}%%
   \ITEE{#3}{DWHenderson,JEWest1970}{%%
      \BIB{#2}{D.W. Henderson and J.E. West}%%
         {Triangulated infinite-dimensional manifolds}%%
         {\jRN{BAMS}}{76}{1970}{655--660}{#1}}%%
   \ITEE{#3}{BHoffmann1979}{%%
      \BIB{#2}{B. Hoffmann}%%
         {A compact contractible topological group is trivial}%%
         {\jRN{ArchM}}{32}{1979}{585--587}{#1}}%%
   \ITEE{#3}{DHofmann2002}{%%
      \BIB{#2}{D. Hofmann}%%
         {On a generalization of the Stone-Weierstrass theorem}%%
         {\jRN{ACS}}{10}{2002}{569--592}{#1}}%%
   \ITEE{#3}{GHognas,AMukherjea1995}{%%
      \BIb{#2}{G. H\"ogn\"as and A. Mukherjea}%%
         {Probability Measures on Semigroups. Convolution Products, Random Walks, and Random Matrices}%%
         {Plenum Press, New York}{1995}{#1}}%%
   \ITEE{#3}{MRHolmes1992}{%%
      \BIB{#2}{M.R. Holmes}%%
         {The universal separable metric space of Urysohn and isometric embeddings thereof in Banach spaces}%%
         {\jRN{FM}}{140}{1992}{199--223}{#1}}%%
   \ITEE{#3}{MRHolmes2008}{%%
      \BIB{#2}{M.R. Holmes}%%
         {The Urysohn space embeds in Banach spaces in just one way}%%
         {\jRN{TopA}}{155}{2008}{1479--1482}{#1}}%%
   \ITEE{#3}{RRHolmes,TYTam1999}{%%
      \BIB{#2}{R.R. Holmes and T.Y. Tam}%%
         {Distance to the convex hull of an orbit under the action of a compact group}%%
         {\jRN{JAusMSA}}{66}{1999}{331--357}{#1}}%%
   \ITEE{#3}{RHorn,RMathias1990}{%%
      \BIB{#2}{R. Horn and R. Mathias}%%
         {Cauchy-Schwartz inequalities associated with positive semidefinite matrices}%%
         {\jRN{LAA}}{142}{1990}{63--82}{#1}}%%
   \ITEE{#3}{GEHuhunaisvili1955}{%%
      \BIB{#2}{G.E. Huhunai\v{s}vili}%%
         {On a property of Urysohn's universal metric space}%%
         {\jRN{DANSSSR}}{101}{1955}{607--610 (Russian)}{#1}}%%
   \ITEE{#3}{JEHumphreys1990}{%%
      \BIb{#2}{J.E. Humphreys}%%
         {Reflection Groups and Coxeter Groups}%%
         {Cambridge University Press}{1990}{#1}}%%
   \ITEE{#3}{JRIsbell1964}{%%
      \BIB{#2}{J.R. Isbell}%%
         {Six theorems about injective metric spaces}%%
         {\jRN{CMHelv}}{39}{1964}{65--76}{#1}}%%
   \ITEE{#3}{SIzumino,YKato1985}{%%
      \BIB{#2}{S. Izumino and Y. Kato}%%
         {The closure of invertible operators on Hilbert space}%%
         {\jRN{ActaSM}}{49}{1985}{321--327}{#1}}%%
   \ITEE{#3}{CJiang2004}{%%
      \BIB{#2}{C. Jiang}%%
         {Similarity classification of Cowen-Douglas operators}%%
         {\jRN{CanadJM}}{56}{2004}{742--775}{#1}}%%
   \ITEE{#3}{WBJohnson,JLindenstrauss2001}{%%
      \BiB{#2}{W.B. Johnson and J. Lindenstrauss}{Basic Concepts in the Geometry of Banach Spaces}%%
         {Chapter 1 in:}{Handbook of the Geometry of Banach Spaces, Vol. 1}%%
         {W.B. Johnson and J. Lindenstrauss (editors), Elsevier Science B.V., Amsterdam}{2001}{1--84}{#1}}%%
   \ITEE{#3}{IBJung,JStochel2008}{%%
      \BIB{#2}{I.B. Jung and J. Stochel}%%
         {Subnormal operators whose adjoints have rich point spectrum}%%
         {\jRN{JFA}}{255}{2008}{1797--1816}{#1}}%%
   \ITEE{#3}{RVKadison,JRRingrose1983}{%%
      \BIb{#2}{R.V. Kadison and J.R. Ringrose}%%
         {Fundamentals of the Theory of Operator Algebras. Volume I: Elementary Theory}%%
         {Academic Press, Inc., New York-London}{1983}{#1}}%%
   \ITEE{#3}{RVKadison,JRRingrose1986}{%%
      \BIb{#2}{R.V. Kadison and J.R. Ringrose}%%
         {Fundamentals of the Theory of Operator Algebras. Volume II: Advanced Theory}%%
         {Academic Press, Inc., Orlando-London}{1986}{#1}}%%
   \ITEE{#3}{SKakutani1936}{%%
      \BIB{#2}{S. Kakutani}%%
         {\"{U}ber die Metrisation der topologischen Gruppen}%%
         {\jRN{ProcImpAcadTokyo}}{12}{1936}{82--84}{#1}}%%
   \ITEE{#3}{SKakutani1938}{%%
      \BIB{#2}{S. Kakutani}%%
         {Two fixed-point theorems concerning bicompact convex sets}%%
         {\jRN{ProcImpAcadTokyo}}{14}{1938}{242--245}{#1}}%%
   \ITEE{#3}{SKakutani1941}{%%
      \BIB{#2}{S. Kakutani}%%
         {Concrete representation of abstract L-spaces}%%
         {\jRN{AnnM}}{42}{1941}{523--537}{#1}}%%
   \ITEE{#3}{SKakutani1941a}{%%
      \BIB{#2}{S. Kakutani}%%
         {Concrete representation of abstract M-spaces}%%
         {\jRN{AnnM}}{42}{1941}{994--1024}{#1}}%%
   \ITEE{#3}{NKalton2007}{%%
      \BIB{#2}{N. Kalton}%%
         {Extending Lipschitz maps into $\CCc(K)$-spaces}%%
         {\jRN{IsraelJM}}{162}{2007}{275--315}{#1}}%%
   \ITEE{#3}{RKane2001}{%%
      \BIb{#2}{R. Kane}%%
         {Reflection Groups and Invariant Theory}%%
         {Canadian Mathematical Society, Springer}{2001}{#1}}%%
   \ITEE{#3}{VKannan,SRRaju1980}{%%
      \BIB{#2}{V. Kannan and S.R. Raju}%%
         {The nonexistence of invariant universal measures on semigroups}%%
         {\jRN{PAMS}}{78}{1980}{482--484}{#1}}%%
   \ITEE{#3}{IKaplansky1951}{%%
      \BIB{#2}{I. Kaplansky}%%
         {A theorem on rings of operators}%%
         {\jRN{PacJM}}{1}{1951}{227--232}{#1}}%%
   \ITEE{#3}{MKatetov1988}{%%
      \BiB{#2}{M. Kat\v{e}tov}{On universal metric spaces}{in: Frolik (ed.),}%%
         {General Topology and its Relations to Modern Analysis and Algebra VI. Proceedings of the Sixth Prague %%
         Topological Symposium 1986}{Heldermann Verlag Berlin}{1988}{323--330}{#1}}%%
   \ITEE{#3}{YKatznelson1960}{%%
      \BIB{#2}{Y. Katznelson}%%
         {Sur les alg\'{e}bres dont les \'{e}l\'{e}ments non n\'{e}gatifs admettent des racines carr\'{e}es}%%
         {\jRN{AnnSciEcNormSupT}}{77}{1960}{167--174}{#1}}%%
   \ITEE{#3}{OHKeller1931}{%%
      \BIB{#2}{O.H. Keller}%%
         {Die Homoiomorphie der kompakten konvexen Mengen in Hilbertschen Raum}%%
         {\jRN{MAnn}}{105}{1931}{748--758}{#1}}%%
   \ITEE{#3}{MAKhamsi,WAKirk,CMartinez2000}{%%
      \BIB{#2}{M.A. Khamsi, W.A. Kirk, C. Martinez}%%
         {Fixed point and selection theorems in hyperconvex spaces}%%
         {\jRN{PAMS}}{128}{2000}{3275--3283}{#1}}%%
   \ITEE{#3}{ABKhararazishvili1998}{%%
      \BIb{#2}{A.B. Khararazishvili}%%
         {Transformation groups and invariant measures. Set-theoretic aspects}%%
         {World Scientific Publishing Co., Inc., River Edge, NJ}{1998}{#1}}%%
   \ITEE{#3}{YKijima1987}{%%
      \BIB{#2}{Y. Kijima}%%
         {Fixed points of nonexpansive self-maps of a compact metric space}%%
         {\jRN{JMAnApp}}{123}{1987}{114--116}{#1}}%%
  \ITEE{#3}{JSKim,ChRKim,SGLee1980}{%%
      \BIB{#2}{J.S. Kim, Ch.R. Kim, S.G. Lee}%%
         {Reducing operator valued spectra of a Hilbert space operator}%%
         {\jRN{JKoreanMS}}{17}{1980}{123--129}{#1}}%%
   \ITEE{#3}{JKindler1995}{%%
      \BIB{#2}{J. Kindler}%%
         {Minimax theorems with applications to convex metric spaces}%%
         {\jRN{CollM}}{68}{1995}{179--186}{#1}}%%
   \ITEE{#3}{WAKirk1998}{%%
      \BIB{#2}{W.A. Kirk}%%
         {Hyperconvexity of $\RRR$-trees}%%
         {\jRN{FM}}{156}{1998}{67--72}{#1}}%%
   \ITEE{#3}{VLKleeJr1952}{%%
      \BIB{#2}{V.L. Klee Jr.}%%
         {Invariant metrics in groups (solution of a problem of Banach)}%%
         {\jRN{PAMS}}{3}{1952}{484--487}{#1}}%%
   \ITEE{#3}{HJKowalsky1957}{%%
      \BIB{#2}{H.J. Kowalsky}%%
         {Einbettung metrischer R\"{a}ume}%%
         {\jRN{ArchM}}{8}{1957}{336--339}{#1}}%%
   \ITEE{#3}{WKubis,MRubin2010}{%%
      \BIB{#2}{W. Kubi\'{s} and M. Rubin}%%
         {Extension and reconstruction theorems for the Urysohn universal metric space}%%
         {\jRN{CzMJ}}{60}{2010}{1--29}{#1}}%%
   \ITEE{#3}{KKuratowski1966}{%%
      \BIb{#2}{K. Kuratowski}%%
         {Topology. \textup{Vol. I}}%%
         {\jRN{PWN}}{1966}{#1}}%%
   \ITEE{#3}{KKuratowski,BKnaster1927}{%%
      \BIB{#2}{K. Kuratowski and B. Knaster}%%
         {A connected and connected im kleinen point set which contains no perfect subset}%%
         {\jRN{BAMS}}{33}{1927}{106--109}{#1}}%%
   \ITEE{#3}{KKuratowski,AMostowski1976}{%%
      \BIb{#2}{K. Kuratowski and A. Mostowski}%%
         {Set Theory with an Introduction to Descriptive Set Theory}%%
         {\jRN{PWN}}{1976}{#1}}%%
   \ITEE{#3}{GLewicki1992}{%%
      \BIB{#2}{G. Lewicki}%%
         {Bernstein's ``lethargy'' theorem in metrizable topological linear spaces}%%
         {\jRN{MonatM}}{113}{1992}{213--226}{#1}}%%
   \ITEE{#3}{ASLewis1996}{%%
      \BIB{#2}{A.S. Lewis}%%
         {Group invariance and convex matrix analysis}%%
         {\jRN{SIAMJMAA}}{17}{1996}{927--949}{#1}}%%
   \ITEE{#3}{C-KLi,N-KTsing1991}{%%
      \BIB{#2}{C.-K. Li and N.-K. Tsing}%%
         {$G$-invariant norms and $G(c)$-radii}%%
         {\jRN{LAA}}{150}{1991}{179--194}{#1}}%%
   \ITEE{#3}{AJLazar,JLindenstrauss1971}{%%
      \BIB{#2}{A.J. Lazar and J. Lindenstrauss}%%
         {Banach spaces whose duals are $L_1$ spaces and their representing matrices}%%
         {\jRN{ActaM}}{126}{1971}{165--193}{#1}}%%
   \ITEE{#3}{EHLieb,MLoss1997}{%%
      \BIb{#2}{E.H. Lieb and M. Loss}%%
         {Analysis \textup{(Graduate Studies in Mathematics, vol. 14)}}%%
         {Amer. Math. Soc., Providence, RI}{1997}{#1}}%%
   \ITEE{#3}{ALindenbaum1926}{%%
      \BIB{#2}{A. Lindenbaum}%%
         {Contributions \`{a} l'\'{e}tude de l'espace m\'{e}trique I}%%
         {\jRN{FM}}{8}{1926}{209--222}{#1}}%%
   \ITEE{#3}{DLindenstrauss,LTzafriri1971}{%%
      \BIB{#2}{D. Lindenstrauss and L. Tzafriri}%%
         {On the complemented subspaces problem}%%
         {\jRN{IsraelJM}}{9}{1971}{263--269}{#1}}%%
   \ITEE{#3}{RILoebl1986}{%%
      \BIB{#2}{R.I. Loebl}%%
         {A note on containment of operators}%%
         {\jRN{BAustrMS}}{33}{1986}{279--291}{#1}}%%
   \ITEE{#3}{LHLoomis1945}{%%
      \BIB{#2}{L.H. Loomis}%%
         {Abstract congruence and the uniqueness of Haar measure}%%
         {\jRN{AnnM}}{46}{1945}{348--355}{#1}}%%
   \ITEE{#3}{LHLoomis1949}{%%
      \BIB{#2}{L.H. Loomis}%%
         {Haar measure in uniform structures}%%
         {\jRN{DukeMJ}}{16}{1949}{193--208}{#1}}%%
   \ITEE{#3}{ERLorch1939}{%%
      \BIB{#2}{E.R. Lorch}%%
         {Bicontinuous linear transformation in certain vector spaces}%%
         {\jRN{BAMS}}{45}{1939}{564--569}{#1}}%%
   \ITEE{#3}{ATLundell,SWeingram1969}{%%
      \BIb{#2}{A.T. Lundell and S. Weingram}%%
         {The topology of CW-complexes}%%
         {Litton Educ. Publ.}{1969}{#1}}%%
   \ITEE{#3}{WLusky1976}{%%
      \BIB{#2}{W. Lusky}%%
         {The Gurarij spaces are unique}%%
         {\jRN{ArchM}}{27}{1976}{627--635}{#1}}%%
   \ITEE{#3}{WLusky1977}{%%
      \BIB{#2}{W. Lusky}%%
         {On separable Lindenstrauss spaces}%%
         {\jRN{JFA}}{26}{1977}{103--120}{#1}}%%
   \ITEE{#3}{DMaharam1942}{%%
      \BIB{#2}{D. Maharam}%%
         {On homogeneous measure algebras}%%
         {\jRN{PNatlUSA}}{28}{1942}{108--111}{#1}}%%
   \ITEE{#3}{MMalicki,SSolecki2009}{%%
      \BIB{#2}{M. Malicki and S. Solecki}%%
         {Isometry groups of separable metric spaces}%%
         {\jRN{MProcCambPhS}}{146}{2009}{67--81}{#1}}%%
   \ITEE{#3}{PMankiewicz1972}{%%
      \BIB{#2}{P. Mankiewicz}%%
         {On extension of isometries in normed linear spaces}%%
         {\jRN{BAPolSSSM}}{20}{1972}{367--371}{#1}}%%
   \ITEE{#3}{JMartinezMaurica,MTPellon1987}{%%
      \BIB{#2}{J. Martinez-Maurica and M.T. Pell\'{o}n}%%
         {Non-archimedean Chebyshev centers}%%
         {\jRN{IndagMP}}{90}{1987}{417--421}{#1}}%%
   \ITEE{#3}{KMaurin1980}{%%
      \BIb{#2}{K. Maurin}%%
         {Analysis, Part II}%%
         {D. Reidel, Dordrecht-Boston-London}{1980}{#1}}%%
   \ITEE{#3}{SMazur,SUlam1932}{%%
      \BIB{#2}{S. Mazur and S. Ulam}%%
         {Sur les transformationes isom\'{e}triques d'espaces vectoriels norm\'{e}s}%%
         {\jRN{CRASParis}}{194}{1932}{946--948}{#1}}%%
   \ITEE{#3}{SMazurkiewicz1920}{%%
      \BIB{#2}{S. Mazurkiewicz}%%
         {Sur les lignes de Jordan}%%
         {\jRN{FM}}{1}{1920}{166--209}{#1}}%%
   \ITEE{#3}{SMazurkiewicz,WSierpinski1920}{%%
      \BIB{#2}{S. Mazurkiewicz and W. Sierpi\'{n}ski}%%
         {Contributions a la topologie des ensembles denombrables}%%
         {\jRN{FM}}{1}{1920}{17--27}{#1}}%%
   \ITEE{#3}{MMbekhta1992}{%%
      \BIB{#2}{M. Mbekhta}%%
         {Sur la structure des composantes connexes semi-Fredholm de $B(H)$}%%
         {\jRN{PAMS}}{116}{1992}{521--524}{#1}}%%
   \ITEE{#3}{JEMcCarthy1996}{%%
      \BIB{#2}{J.E. McCarthy}%%
         {Boundary values and Cowen-Douglas curvature}%%
         {\jRN{JFA}}{137}{1996}{1--18}{#1}}%%
   \ITEE{#3}{JMelleray2007}{%%
      \BIB{#2}{J. Melleray}%%
         {Computing the complexity of the relation of isometry between separable Banach spaces}%%
         {\jRN{MLQ}}{53}{2007}{128--131}{#1}}%%
   \ITEE{#3}{JMelleray2007a}{%%
      \BIB{#2}{J. Melleray}%%
         {On the geometry of Urysohn's universal metric space}%%
         {\jRN{TopA}}{154}{2007}{384--403}{#1}}%%
   \ITEE{#3}{JMelleray2008}{%%
      \BIB{#2}{J. Melleray}%%
         {Some geometric and dynamical properties of the Urysohn space}%%
         {\jRN{TopA}}{155}{2008}{1531--1560}{#1}}%%
   \ITEE{#3}{JMelleray,FVPetrov,AMVershik2008}{%%
      \BIB{#2}{J. Melleray, F.V. Petrov, A.M. Vershik}%%
         {Linearly rigid metric spaces and the embedding problem}%%
         {\jRN{FM}}{199}{2008}{177--194}{#1}}%%
   \ITEE{#3}{EMichael1953}{%%
      \BIB{#2}{E. Michael}%%
         {Some extension theorems for continuous functions}%%
         {\jRN{PacJM}}{3}{1953}{789--806}{#1}}%%
   \ITEE{#3}{EMichael1954}{%%
      \BIB{#2}{E. Michael}%%
         {Local properties of topological spaces}%%
         {\jRN{DukeMJ}}{21}{1954}{163--171}{#1}}%%
   \ITEE{#3}{EMichael1956}{%%
      \BIB{#2}{E. Michael}%%
         {Selected selection theorems}%%
         {\jRN{AmMMon}}{58}{1956}{233--238}{#1}}%%
   \ITEE{#3}{EMichael1956a}{%%
      \BIB{#2}{E. Michael}%%
         {Continuous selections. I}%%
         {\jRN{AnnM}}{63}{1956}{361--382}{#1}}%%
   \ITEE{#3}{EMichael1956b}{%%
      \BIB{#2}{E. Michael}%%
         {Continuous selections. II}%%
         {\jRN{AnnM}}{64}{1956}{562--580}{#1}}%%
   \ITEE{#3}{EMichael1959}{%%
      \BIB{#2}{E. Michael}%%
         {A theorem on semi-continuous set-valued functions}%%
         {\jRN{DukeMJ}}{26}{1959}{647--652}{#1}}%%
   \ITEE{#3}{JVanMill1986}{%%
      \BIB{#2}{J. van Mill}%%
         {Another counterexample in ANR theory}%%
         {\jRN{PAMS}}{97}{1986}{136--138}{#1}}%%
   \ITEE{#3}{JVanMill2001}{%%
      \BIb{#2}{J. van Mill}%%
         {The Infinite-Dimensional Topology of Function Spaces %%
         \textup{(North-Holland Mathematical Library, vol. 64)}}%%
         {Elsevier, Amsterdam}{2001}{#1}}%%
   \ITEE{#3}{WMlak1991}{%%
      \BIb{#2}{W. Mlak}%%
         {Hilbert Spaces and Operator Theory}%%
         {PWN --- Polish Scientific Publishers and Kluwer Academic Publishers, Warszawa-Dordrecht}{1991}{#1}}%%
   \ITEE{#3}{JMogilski1979}{%%
      \BIB{#2}{J. Mogilski}%%
         {$CE$-decomposition of $l_2$-manifolds}%%
         {\jRN{BAPolSSSM}}{27}{1979}{309--314}{#1}}%%
   \ITEE{#3}{RLMoore1916}{%%
      \BIB{#2}{R.L. Moore}%%
         {On the foundations of plane analysis situs}%%
         {\jRN{TAMS}}{17}{1916}{131--164}{#1}}%%
   \ITEE{#3}{KMorita1955}{%%
      \BIB{#2}{K. Morita}%%
         {A condition for the metrizability of topological spaces and for $n$-dimensionality}%%
         {\jRN{SciRepTokyoA}}{5}{1955}{33--36}{#1}}%%
   \ITEE{#3}{AMukherjea,NATserpes1976}{%%
      \BIb{#2}{A. Mukherjea and N.A. Tserpes}%%
         {Measures on topological semigroups}%%
         {Springer Lecture Notes in Math. Vol. 547, Berlin}{1976}{#1}}%%
   \ITEE{#3}{JMycielski1974}{%%
      \BIB{#2}{J. Mycielski}%%
         {Remarks on invariant measures in metric spaces}%%
         {\jRN{CollM}}{32}{1974}{105--112}{#1}}%%
   \ITEE{#3}{SNNaboko1984}{%%
      \BIB{#2}{S.N. Naboko}%%
         {Conditions for similarity to unitary and selfadjoint operators}%%
         {\jRN{FunkAnalPril}}{18}{1984}{16--27}{#1}}%%
   \ITEE{#3}{LNachbin1965}{%%
      \BIb{#2}{L. Nachbin}%%
         {The Haar Integral}%%
         {D. Van Nostrand Company, Inc., Princeton-New Jersey-Toronto-New York-London}{1965}{#1}}%%
   \ITEE{#3}{TDNarang,SKGarg1991}{%%
      \BIB{#2}{T.D. Narang and S.K. Garg}%%
         {On the uniqueness of best approximation in non-archimedian spaces}%%
         {\jRN{PeriodMHung}}{22}{1991}{121--124}{#1}}%%
   \ITEE{#3}{JVonNeumann1930}{%%
      \BIB{#2}{J. von Neumann}%%
         {Zur Algebra der Funktionaloperationen und Theorie der normalen Operatoren}%%
         {\jRN{MAnn}}{102}{1930}{370--427}{#1}}%%
   \ITEE{#3}{JVonNeumann1934}{%%
      \BIB{#2}{J. von Neumann}%%
         {Zum Haarschen Mass in topologischen Gruppen}%%
         {\jRN{ComposM}}{1}{1934}{106--114}{#1}}%%
   \ITEE{#3}{JVonNeumann1937}{%%
      \BiB{#2}{J. von Neumann}%%
         {Some matrix-inequalities and metrization of matrix-space}{\jRN{TomskUnivRev}{} \textbf{1} (1937), 286--300; %%
         in }{Collected Works}{Pergamon, New York}{1962}{Vol. 4, 205--219}{#1}}%%
   \ITEE{#3}{JVonNeumann1949}{%%
      \BIB{#2}{J. von Neumann}%%
         {On Rings of Operators. Reduction Theory}%%
         {\jRN{AnnM}}{50}{1949}{401--485}{#1}}%%
   \ITEE{#3}{ONielson1973}{%%
      \BIB{#2}{O. Nielson}%%
         {Borel sets of von Neumann algebras}%%
         {\jRN{AmJM}}{95}{1973}{145--164}{#1}}%%
   \ITEE{#3}{pn2002}{\bibITEM{#2}{#1} \mypaplist{pn1}}%%
   \ITEE{#3}{pn2006a}{\bibITEM{#2}{#1} \mypaplist{pn2}}%%
   \ITEE{#3}{pn2006b}{\bibITEM{#2}{#1} \mypaplist{pn3}}%%
   \ITEE{#3}{pn2007}{\bibITEM{#2}{#1} \mypaplist{pn4}}%%
   \ITEE{#3}{pn2008a}{\bibITEM{#2}{#1} \mypaplist{pn5}}%%
   \ITEE{#3}{pn2008b}{\bibITEM{#2}{#1} \mypaplist{pn6}}%%
   \ITEE{#3}{pn2009a}{\bibITEM{#2}{#1} \mypaplist{pn7}}%%
   \ITEE{#3}{pn2009b}{\bibITEM{#2}{#1} \mypaplist{pn8}}%%
   \ITEE{#3}{pn2009c}{\bibITEM{#2}{#1} \mypaplist{pn9}}%%
   \ITEE{#3}{pn2010a}{\bibITEM{#2}{#1} \mypaplist{pn12}}%%
   \ITEE{#3}{pn2010b}{\bibITEM{#2}{#1} \mypaplist{pn13}}%%
   \ITEE{#3}{pn2011a}{\bibITEM{#2}{#1} \mypaplist{pn10}}%% G-represent
   \ITEE{#3}{pn2011b}{\bibITEM{#2}{#1} \mypaplist{pn15}}%% U-topol
   \ITEE{#3}{pn2011c}{\bibITEM{#2}{#1} \mypaplist{pn16}}%% note-inv
   \ITEE{#3}{pn2011d}{\bibITEM{#2}{#1} \mypaplist{pn17}}%% S-W
   \ITEE{#3}{pn2009x}{%% U-F2
      \bibITEM{#2}{#1} \mypaplist{pn11}}%% U-f
   \ITEE{#3}{pn2010x}{%%
      \bibITEM{#2}{#1} \mypaplist{pn14}}%% NTPVG
   \ITEE{#3}{pnXXXXb}{%% U-F2, U-F, SMF
      \bibITEM{#2}{#1} \mypaplist{pnX2}}%% functor
   \ITEE{#3}{pnXXXXc}{%% SPECTRUM
      \bibITEM{#2}{#1} \mypaplist{pnX3}}%% orbits
   \ITEE{#3}{pnXXXXd}{%% uvAg
      \bibITEM{#2}{#1} \mypaplist{pnX13}}%% note_ANR
   \ITEE{#3}{MNiezgoda1998}{%%
      \BIB{#2}{M. Niezgoda}%%
         {Group majorization and Schur type inequalities}%%
         {\jRN{LAA}}{268}{1998}{9--30}{#1}}%%
   \ITEE{#3}{MNiezgoda1998a}{%%
      \BIB{#2}{M. Niezgoda}%%
         {An analytical characterization of effective and of irreducible groups inducing cone orderings}%%
         {\jRN{LAA}}{269}{1998}{105--114}{#1}}%%
   \ITEE{#3}{MNiezgoda,TYTam2001}{%%
      \BIB{#2}{M. Niezgoda and T.Y. Tam}%%
         {On norm property of $G(c)$-radii and Eaton triples}%%
         {\jRN{LAA}}{336}{2001}{119--130}{#1}}%%
   \ITEE{#3}{APazy1983}{%%
      \BIb{#2}{A. Pazy}{Semigroups of Linear Operators %%
         and Applications to Partial Differential Equations \textup{(Applied Mathematical Sciences, vol. 44)}}%%
         {Springer-Verlag, New York}{1983}{#1}}%%
   \ITEE{#3}{APelc1982}{%%
      \BIB{#2}{A. Pelc}%%
         {Semiregular invariant measures on abelian groups}%%
         {\jRN{PAMS}}{86}{1982}{423--426}{#1}}%%
   \ITEE{#3}{RPenrose1955}{%%
      \BIB{#2}{R. Penrose}%%
         {A generalized inverse for matrices}%%
         {\jRN{ProcCambPhS}}{51}{1955}{406--413}{#1}}%%
   \ITEE{#3}{VPestov2006}{%%
      \BIb{#2}{V. Pestov}%%
         {Dynamics of infinite-dimensional groups. The Ramsey-Dvoretzky-Milman phenomenon}%%
         {University Lecture Series \textbf{40}, AMS, Providence, RI}{2006}{#1}}%%
   \ITEE{#3}{VPestov2007}{%%
      \BiB{#2}{V. Pestov}%%
         {Forty-plus annotated questions about large topological groups}%%
         {in:}{Open Problems in Topology II}{Elliot Pearl (editor), Elsevier B.V., Amsterdam}{2007}{439--450}{#1}}%%
   \ITEE{#3}{PVPetersen1993}{%%
      \BiB{#2}{P.V. Petersen}%%
         {Gromov-Hausdorff convergence of metric spaces}{in book:}{Differential Geometry: Riemannian Geometry %%
         (Los Angeles, CA, 1990)}{Amer. Math. Soc., Providence, RI}{1993}{489--504}{#1}}%%
   \ITEE{#3}{DRamachandran,MMisiurewicz1982}{%%
      \BIB{#2}{D. Ramachandran and M. Misiurewicz}%%
         {Hopf's theorem on invariant measures for a group of transformations}%%
         {\jRN{SM}}{74}{1982}{183--189}{#1}}%%
   \ITEE{#3}{JMRosenblatt1974}{%%
      \BIB{#2}{J.M. Rosenblatt}%%
         {Equivalent invariant measures}%%
         {\jRN{IsraelJM}}{17}{1974}{261--270}{#1}}%%
   \ITEE{#3}{HLRoyden1963}{%%
      \BIb{#2}{H.L. Royden}%%
         {Real Analysis}%%
         {The Macmillan Co., New York}{1963}{#1}}%%
   \ITEE{#3}{WRudin1962}{%%
      \BIb{#2}{W. Rudin}%%
         {Fourier Analysis on Groups \textup{(Interscience Tracts in Pure and Applied Mathematics, Number 12)}}%%
         {Interscience Publishers, New York}{1962}{#1}}%%
   \ITEE{#3}{WRudin1991}{%%
      \BIb{#2}{W. Rudin}%%
         {Functional Analysis}%%
         {McGraw-Hill Science}{1991}{#1}}%%
   \ITEE{#3}{TSaito1972}{%%
      \BiB{#2}{T. Sait\^{o}}{Generations of von Neumann algebras}%%
         {Lecture Notes in Math. vol. 247}{\textup{(}Lecture on Operator Algebras\textup{)}}%%
         {Springer, Berlin-Heidelberg-New York}{1972}{435--531}{#1}}%%
   \ITEE{#3}{KSakai,MYaguchi2003}{%%
      \BIB{#2}{K. Sakai and M. Yaguchi}%%
         {Characterizing manifolds modeled on certain dense subspaces of non-separable Hilbert spaces}%%
         {\jRN{TsukubaJM}}{27}{2003}{143--159}{#1}}%%
   \ITEE{#3}{SSakai1971}{%%
      \BIb{#2}{S. Sakai}%%
         {$\CCc^*$-Algebras and $\WWw^*$-Algebras}%%
         {Springer-Verlag, Berlin-Heidelberg-New York}{1971}{#1}}%%
   \ITEE{#3}{RSchori1971}{%%
      \BIB{#2}{R. Schori}%%
         {Topological stability for infinite-dimensional manifolds}%%
         {\jRN{ComposM}}{23}{1971}{87--100}{#1}}%%
   \ITEE{#3}{JTSchwartz1967}{%%
      \BIb{#2}{J.T. Schwartz}%%
         {$\WWw^*$-algebras}%%
         {Gordon and Breach, Science Publishers Inc., New York-London-Paris}{1967}{#1}}%%
   \ITEE{#3}{ZSemadeni1971}{%%
      \BIb{#2}{Z. Semadeni}%%
         {Banach Spaces of Continuous Functions (Vol. I)}%%
         {\jRN{PWN}}{1971}{#1}}%%
   \ITEE{#3}{JPSerre1951}{%%
      \BIB{#2}{J.-P. Serre}%%
         {Homologie singuli\`{e}re des espaces fibr\'{e}s}%%
         {\jRN{AnnM}}{54}{1951}{425--505}{#1}}%%
   \ITEE{#3}{DSherman2007}{%%
      \BIB{#2}{D. Sherman}%%
         {On the dimension theory of von Neumann algebras}%%
         {\jRN{MScand}}{101}{2007}{123--147}{#1}}%%
   \ITEE{#3}{WSierpinski1928}{%%
      \BIB{#2}{W. Sierpi\'{n}ski}%%
         {Sur les projections des ensembles compl\'{e}mentaires aux ensembles \textup{(A)}}%%
         {\jRN{FM}}{11}{1928}{117--122}{#1}}%%
   \ITEE{#3}{MSlocinski1980}{%%
      \BIB{#2}{M. S\l{}oci\'{n}ski}%%
         {On the Wold-type decomposition of a pair of commuting isometries}%%
         {\jRN{APM}}{37}{1980}{255--262}{#1}}%%
   \ITEE{#3}{RCSteinlage1975}{%%
      \BIB{#2}{R.C. Steinlage}%%
         {On Haar measure in locally compact $T_2$ spaces}%%
         {\jRN{AmJM}}{97}{1975}{291--307}{#1}}%%
   \ITEE{#3}{JStochel,FHSzafraniec1989}{%%
      \BIB{#2}{J. Stochel and F.H. Szafraniec}%%
         {On normal extensions of unbounded operators. III. Spectral properties}%%
         {\jRN{PublRIMSKyoto}}{25}{1989}{105--139}{#1}}%%
   \ITEE{#3}{JStochel,FHSzafraniec1989a}{%%
      \BIB{#2}{J. Stochel and F.H. Szafraniec}%%
         {The normal part of an unbounded operator}%%
         {\jRN{ProcKonink}}{92}{1989}{495--503}{#1}}%%
   \ITEE{#3}{AHStone1962}{%%
      \BIB{#2}{A.H. Stone}%%
         {Absolute $\FFf_{\sigma}$-spaces}%%
         {\jRN{PAMS}}{13}{1962}{495--499}{#1}}%%
   \ITEE{#3}{AHStone1962a}{%%
      \BIB{#2}{A.H. Stone}%%
         {Non-separable Borel sets}%%
         {\jRN{DissM}}{28}{1962}{41 pages}{#1}}%%
   \ITEE{#3}{AHStone1972}{%%
      \BIB{#2}{A.H. Stone}%%
         {Non-separable Borel sets II}%%
         {\jRN{GTopA}}{2}{1972}{249--270}{#1}}%%
   \ITEE{#3}{MHStone1937}{%%
      \BIB{#2}{M.H. Stone}%%
         {Application of the theory of Boolean rings to general topology}%%
         {\jRN{TAMS}}{41}{1937}{375--481}{#1}}%%
   \ITEE{#3}{MHStone1948}{%%
      \BIB{#2}{M.H. Stone}%%
         {The generalized Weierstrass approximation theorem}%%
         {\jRN{MMag}}{21}{1948}{167--184}{#1}}%%
   \ITEE{#3}{BSz-Nagy1947}{%%
      \BIB{#2}{B. Sz.-Nagy}%%
         {On uniformly bounded linear transformations in Hilbert space}%%
         {\jRN{ActaSM}}{11}{1947}{152--157}{#1}}%%
   \ITEE{#3}{WTakahashi1970}{%%
      \BIB{#2}{W. Takahashi}%%
         {A convexity in metric space and nonexpansive mappings, I}%%
         {\jRN{KodaiMSemRep}}{22}{1970}{142--149}{#1}}%%
   \ITEE{#3}{MTakesaki2002}{%%
      \BIb{#2}{M. Takesaki}%%
         {Theory of Operator Algebras I \textup{(Encyclopaedia of Mathematical Sciences, Volume 124)}}%%
         {Springer-Verlag, Berlin-Heidelberg-New York}{2002}{#1}}%%
   \ITEE{#3}{MTakesaki2003}{%%
      \BIb{#2}{M. Takesaki}%%
         {Theory of Operator Algebras II \textup{(Encyclopaedia of Mathematical Sciences, Volume 125)}}%%
         {Springer-Verlag, Berlin-Heidelberg-New York}{2003}{#1}}%%
   \ITEE{#3}{MTakesaki2003a}{%%
      \BIb{#2}{M. Takesaki}%%
         {Theory of Operator Algebras III \textup{(Encyclopaedia of Mathematical Sciences, Volume 127)}}%%
         {Springer-Verlag, Berlin-Heidelberg-New York}{2003}{#1}}%%
   \ITEE{#3}{TYTam1999}{%%
      \BIB{#2}{T.Y. Tam}%%
         {An extension of a result of Lewis}%%
         {\jRN{ELA}}{5}{1999}{1--10}{#1}}%%
   \ITEE{#3}{TYTam2000}{%%
      \BIB{#2}{T.Y. Tam}%%
         {Group majorization, Eaton triples and numerical range}%%
         {\jRN{LMLA}}{47}{2000}{11--28}{#1}}%%
   \ITEE{#3}{TYTam2002}{%%
      \BIB{#2}{T.Y. Tam}%%
         {Generalized Schur-concave functions and Eaton triples}%%
         {\jRN{LMLA}}{50}{2002}{113--120}{#1}}%%
   \ITEE{#3}{TYTam,WCHill2001}{%%
      \BIB{#2}{T.Y. Tam and W.C. Hill}%%
         {On $G$-invariant norms}%%
         {\jRN{LAA}}{331}{2001}{101--112}{#1}}%%
   \ITEE{#3}{AFTiman,IAVestfrid1983}{%%
      \BIB{#2}{A.F. Timan and I.A. Vestfrid}%%
         {Any separable ultrametric space can be isometrically imbedded in $l_2$}%%
         {\jRN{FAA}}{17}{1983}{70--71}{#1}}%%
   \ITEE{#3}{JTomiyama1958}{%%
      \BIB{#2}{J. Tomiyama}%%
         {Generalized dimension function for $\WWw^*$-algebras of infinite type}%%
         {\jRN{TohokuMJ} (2)}{10}{1958}{121--129}{#1}}%%
   \ITEE{#3}{HTorunczyk1970}{%%
      \BIB{#2}{H. Toru\'{n}czyk}%%
         {Remarks on Anderson's paper ``On topological infinite deficiency''}%%
         {\jRN{FM}}{66}{1970}{393--401}{#1}}%%
   \ITEE{#3}{HTorunczyk1970a}{%%
      \BIb{#2}{H. Toru\'{n}czyk}%%
         {$G$-$K$-absorbing and skeletonized sets in metric spaces}%%
         {Ph.D. thesis, Inst. Math. Polish Acad. Sci., Warszawa}{1970}{#1}}%%
   \ITEE{#3}{HTorunczyk1972}{%%
      \BIB{#2}{H. Toru\'{n}czyk}%%
         {A short proof of Hausdorff's theorem on extending metrics}%%
         {\jRN{FM}}{77}{1972}{191--193}{#1}}%%
   \ITEE{#3}{HTorunczyk1974}{%%
      \BIB{#2}{H. Toru\'{n}czyk}%%
         {Absolute retracts as factors of normed linear spaces}%%
         {\jRN{FM}}{86}{1974}{53--67}{#1}}%%
   \ITEE{#3}{HTorunczyk1975}{%%
      \BIB{#2}{H. Toru\'{n}czyk}%%
         {On Cartesian factors and the topological classification of linear metric spaces}%%
         {\jRN{FM}}{88}{1975}{71--86}{#1}}%%
   \ITEE{#3}{HTorunczyk1978}{%%
      \BIB{#2}{H. Toru\'{n}czyk}%%
         {Concerning locally homotopy negligible sets and characterization of $l_2$-manifolds}%%
         {\jRN{FM}}{101}{1978}{93--110}{#1}}%%
   \ITEE{#3}{HTorunczyk1980}{%%
      \BiB{#2}{H. Toru\'{n}czyk}{Characterization of infinite-dimensional manifolds}{in:}%%
         {Proceedings of the International Conference on Geometric Topology (Warsaw, 1978)}%%
         {\jRN{PWN}}{1980}{431--437}{#1}}%%
   \ITEE{#3}{HTorunczyk1981}{%%
      \BIB{#2}{H. Toru\'{n}czyk}%%
         {Characterizing Hilbert space topology}%%
         {\jRN{FM}}{111}{1981}{247--262}{#1}}%%
   \ITEE{#3}{HTorunczyk1985}{%%
      \BIB{#2}{H. Toru\'{n}czyk}%%
         {A correction of two papers concerning Hilbert manifolds}%%
         {\jRN{FM}}{125}{1985}{89--93}{#1}}%%
   \ITEE{#3}{KTsuda1985}{%%
      \BIB{#2}{K. Tsuda}%%
         {A note on closed embeddings of finite dimensional metric spaces}%%
         {\jRN{BLondMS}}{17}{1985}{273--278}{#1}}%%
   \ITEE{#3}{PSUrysohn1925}{%%
      \BIB{#2}{P.S. Urysohn}%%
         {Sur un espace m\'{e}trique universel}%%
         {\jRN{CRASParis}}{180}{1925}{803--806}{#1}}%%
   \ITEE{#3}{PSUrysohn1927}{%%
      \BIB{#2}{P.S. Urysohn}%%
         {Sur un espace m\'{e}trique universel}%%
         {\jRN{BullSM}}{51}{1927}{43--64, 74--96}{#1}}%%
   \ITEE{#3}{VVUspenskij1986}{%%
      \BIB{#2}{V.V. Uspenskij}%%
         {A universal topological group with a countable basis}%%
         {\jRN{FAA}}{20}{1986}{86--87}{#1}}%%
   \ITEE{#3}{VVUspenskij1990}{%%
      \BIB{#2}{V.V. Uspenskij}%%
         {On the group of isometries of the Urysohn universal metric space}%%
         {\jRN{CMUC}}{31}{1990}{181--182}{#1}}%%
   \ITEE{#3}{VVUspenskij2004}{%%
      \BIB{#2}{V.V. Uspenskij}%%
         {The Urysohn universal metric space is homeomorphic to a Hilbert space}%%
         {\jRN{TopA}}{139}{2004}{145--149}{#1}}%%
   \ITEE{#3}{VVUspenskij2008}{%%
      \BIB{#2}{V.V. Uspenskij}%%
         {On subgroups of minimal topological groups}%%
         {\jRN{TopA}}{155}{2008}{1580--1606}{#1}}%%
   \ITEE{#3}{VSVaradarajan1963}{%%
      \BIB{#2}{V.S. Varadarajan}%%
         {Groups of automorphisms of Borel spaces}%%
         {\jRN{TAMS}}{109}{1963}{191--220}{#1}}%%
   \ITEE{#3}{AMVershik1998}{%%
      \BIB{#2}{A.M. Vershik}%%
         {The universal Urysohn space, Gromov's metric triples, and random metrics on the series of natural numbers}%%
         {\jRN{UspekhiMN}}{53}{1998}{57--64}{#1} English translation: \jRN{RussMS}{} \textbf{53} (1998), 921--928. %%
         Correction: \jRN{UspekhiMN}{} \textbf{56} (2001), p. 207. English translation: \jRN{RussMS}{} \textbf{56} %%
         (2001), p. 1015.}%%
   \ITEE{#3}{AMVershik2002}{%%
      \BIb{#2}{A.M. Vershik}%%
         {Random metric spaces and the universal Urysohn space}%%
         {Fundamental Mathematics Today. 10th anniversary of the Independent Moscow University. MCCME Publ.}{2002}{#1}}%%
   \ITEE{#3}{NWeaver1999}{%%
      \BIb{#2}{N. Weaver}%%
         {Lipschitz Algebras}%%
         {World Scientific}{1999}{#1}}%%
   \ITEE{#3}{JWeidmann1980}{%%
      \BIb{#2}{J. Weidmann}%%
         {Linear Operators in Hilbert Spaces}%%
         {(Graduate Texts in Mathematics, vol. 68) Springer-Verlag New York Inc.}{1980}{#1}}%%
   \ITEE{#3}{JEWest1969}{%%
      \BIB{#2}{J.E. West}%%
         {Approximating homotopies by isotopies in Fr\'{e}chet manifolds}%%
         {\jRN{BAMS}}{75}{1969}{1254--1257}{#1}}%%
   \ITEE{#3}{JEWest1969a}{%%
      \BIB{#2}{J.E. West}%%
         {Fixed-point sets of transformation groups on infinite-product spaces}%%
         {\jRN{PAMS}}{21}{1969}{575--582}{#1}}%%
   \ITEE{#3}{JEWest1970}{%%
      \BIB{#2}{J.E. West}%%
         {The ambient homeomorphy of infinite-dimensional Hilbert spaces}%%
         {\jRN{PacJM}}{34}{1970}{257--267}{#1}}%%
   \ITEE{#3}{JHCWhitehead1949}{%%
      \BIB{#2}{J.H.C. Whitehead}%%
         {Combinatorial homotopy I}%%
         {\jRN{BAMS}}{55}{1949}{213--245}{#1}}%%
   \ITEE{#3}{GTWhyburn1942}{%%
      \BIb{#2}{G. T. Whyburn}%%
         {Analytic Topology}%%
         {Amer. Math. Soc. Colloquium Publications (vol. XXVIII), New York}{1942}{#1}}%%
   \ITEE{#3}{WWogen1969}{%%
      \BIB{#2}{W. Wogen}%%
         {On generators for von Neumann algebras}%%
         {\jRN{BAMS}}{75}{1969}{95--99}{#1}}%%
   \ITEE{#3}{RYTWong1967}{%%
      \BIB{#2}{R.Y.T. Wong}%%
         {On homeomorphisms of certain infinite dimensional spaces}%%
         {\jRN{TAMS}}{128}{1967}{148--154}{#1}}%%
   \ITEE{#3}{LYang,JZhang1987}{%%
      \BIB{#2}{L. Yang and J. Zhang}%%
         {Average distance constants of some compact convex space}%%
         {\jRN{JChinUST}}{17}{1987}{17--23}{#1}}%%
   \ITEE{#3}{PZakrzewski1993}{%%
      \BIB{#2}{P. Zakrzewski}%%
         {The existence of invariant $\sigma$-finite measures for a group of transformations}%%
         {\jRN{IsraelJM}}{83}{1993}{275--287}{#1}}%%
   \ITEE{#3}{PZakrzewski2002}{%%
      \BIb{#2}{P. Zakrzewski}%%
         {Measures on Algebraic-Topological Structures, Handbook of Measure Thoery}%%
         {E. Pap, ed., Elsevier, Amsterdam}{2002, 1091--1130}{#1}}%%
   \ITEE{#3}{KZhu2000}{%%
      \BIB{#2}{K. Zhu}%%
         {Operators in Cowen-Douglas classes}%%
         {\jRN{IllinoisJM}}{44}{2000}{767--783}{#1}}%%
   }
\newcommand{\mypaplist}[2][]{%%
   \ITEE{#2}{pn1}{%%
      \myBIB{Separate and joint similarity to families of normal operators}%%
         {\jRN[#1]{SM}}{149}{2002}{39--62}}%%
   \ITEE{#2}{pn2}{%%
      \myBIB{Locally arcwise connected metrizable spaces with the fixed point property are complete-metrizable}%%
         {\jRN[#1]{TopA}}{153}{2006}{1639--1642}}%%
   \ITEE{#2}{pn3}{%%
      \myBIB{Invariant measures for equicontinuous semigroups of continuous transformations of a compact Hausdorff space}%%
         {\jRN[#1]{TopA}}{153}{2006}{3373--3382}}%%
   \ITEE{#2}{pn4}{%%
      \myBIB{Approximation of the Hausdorff distance by the distance of continuous surjections}%%
         {\jRN[#1]{TopA}}{154}{2007}{655--664}}%%
   \ITEE{#2}{pn5}{%%
      \myBIB{Generalized Haar integral}%%
         {\jRN[#1]{TopA}}{155}{2008}{1323--1328}}%%
   \ITEE{#2}{pn6}{%%
      \myBIB{Integration and Lipschitz functions}%%
         {\jRN[#1]{RCMP}}{57}{2008}{391--399}}%%
   \ITEE{#2}{pn7}{%%
      \myBIB{Canonical Banach function spaces generated by Urysohn universal spaces. Measures as Lipschitz maps}%%
         {\jRN[#1]{SM}}{192}{2009}{97--110}}%%
   \ITEE{#2}{pn8}{%%
      \myBIB{Urysohn universal spaces as metric groups of exponent $2$}%%
         {\jRN[#1]{FM}}{204}{2009}{1--6}}%%
   \ITEE{#2}{pn9}{%%
      \myBIB{Central subsets of Urysohn universal spaces}%%
         {\jRN[#1]{CMUC}}{50}{2009}{445--461}}%%
   \ITEE{#2}{pn10}{%%
      \myBIB[P. Niemiec and T.Y. Tam]{A representation of $G$-in\-variant norms for Eaton triple}%%
         {\jRN[#1]{JCA}}{18}{2011}{59--65}}%%
   \ITEE{#2}{pn11}{%% U-F2
      \myBIB{Functor of extension of contractions on Urysohn universal spaces}%%
         {\jRN[#1]{ACS}}{}{2009}{\texttt{DOI: 10.1007/s10485-009-9218-z}}}%%
   \ITEE{#2}{pn12}{%%
      \myBIB{Ultra-$\mM$-separability}%%
         {\jRN[#1]{TopA}}{157}{2010}{669--673}}%%
   \ITEE{#2}{pn13}{%%
      \myBIB{Functor of extension of $\Lambda$-isometric maps between central subsets %%
         of the unbounded Urysohn universal space}{\jRN[#1]{CMUC}}{51}{2010}{541--549}}%%
   \ITEE{#2}{pn14}{%%
      \myBIB{Normed topological pseudovector groups}{\jRN[#1]{ACS}}{}{2010}%%
         {\ITE{\equal{#1}{}}{\texttt{DOI: 10.1007/s10485\-010-9239-7}}{\texttt{DOI: 10.1007/s10485-010-9239-7}}}}%%
   \ITEE{#2}{pn15}{%%
      \myBIB{Topological structure of Urysohn universal spaces}%%
         {\jRN[#1]{TopA}}{158}{2011}{352--359}}%%
   \ITEE{#2}{pn16}{%%
      \myBIB{A note on invariant measures}%%
         {\jRN[#1]{OpusM}}{31}{2011}{425--431}}%%
   \ITEE{#2}{pn17}{%%
      \myBIB{Strengthened Stone-Weierstrass type theorem}%%
         {\jRN[#1]{OpusM}}{31}{2011}{645--650}}%%
   \ITEE{#2}{pnX2}{%% U-F2, U-F, SMF
      \myBAPP{Functor of continuation in Hilbert cube and Hilbert space}%%
         {to appear in \jRN[#1]{FM}}}%%
   \ITEE{#2}{pnX3}{%% SPECTRUM
      \myBAPP{Norm closures of orbits of bounded operators}%%
         {to appear.}}%%
   \ITEE{#2}{pnX6}{%%
      \myBAPP{Extending maps by injective $\sigma$-$Z$-maps in Hilbert manifolds}%%
         {to appear in \jRN[#1]{BullPol}}}%%
   \ITEE{#2}{pnX7}{%%
      \myBAPP{Spaces of measurable functions}%%
         {submitted to \jRN[#1]{CollectM}}}%%
   \ITEE{#2}{pnX8}{%%
      \myBAPP{Normal systems over ANR's, rigid embeddings and nonseparable absorbing sets}%%
         {submitted to \jRN[#1]{ActaMSinES}}}%%
   \ITEE{#2}{pnX9}{%%
      \myBAPP{Borel structure of the spectrum of a closed operator}%%
         {submitted to \jRN[#1]{SM}}}%%
   \ITEE{#2}{pnX10}{%%
      \myBAPP{Central points and measures and dense subsets of compact metric spaces}%%
         {submitted to \jRN[#1]{TopMethNA}}}%%
   \ITEE{#2}{pnX11}{%%
      \myBAPP{Generalized absolute values and polar decompositions of a bounded operator}%%
         {submitted to \jRN[#1]{IEOT}.}}%%
   \ITEE{#2}{pnX12}{%%
      \myBAPP{Ultrametrics, extending of Lipschitz maps and nonexpansive selections}%%
         {accepted for publication in \jRN[#1]{HJM}}}%%
   \ITEE{#2}{pnX13}{%%
      \myBAPP{A note on ANR's}%%
         {submitted to \jRN[#1]{TopA}}}%%
   \ITEE{#2}{pnX14}{%%
      \myBAPP{Problem with almost everywhere equality}%%
         {submitted to \jRN[#1]{ArchM}}}%%
   \ITEE{#2}{pnX15}{%%
      \myBAPP{Universal valued Abelian groups}%%
         {submitted to \jRN[#1]{LNM}}}%%
   \ITEE{#2}{pnX16}{%%
      \myBAPP{Unitary equivalence and decompositions of finite systems of closed densely defined operators %%
         in Hilbert spaces}{submitted to \jRN[#1]{DissM}}}%%
   }
\newcommand{\IC}{\operatorname{IC}}\newcommand{\oRB}{\OOo\,}\newcommand{\cloRB}{\overline{\OOo}\,}
\newcommand{\Uind}{\operatorname{Uind}}
\begin{document}

\title[Norm closures of orbits of bounded operators]{Norm closures of orbits\\of bounded operators}
\myData
\begin{abstract}
To every bounded linear operator $A$ between Hilbert spaces $\HHh$ and $\KKk$ three cardinals $\iota_r(A)$,
$\iota_i(A)$ and $\iota_f(A)$ and a binary number $\iota_b(A)$ are assigned in terms of which the descriptions
of the norm closures of the orbits $\{G A L^{-1}\dd\ L \in \GGg_1,\ G \in \GGg_2\}$ are given for $\GGg_1$ and $\GGg_2$
(chosen independently) being the trivial group, the unitary group or the group of all invertible operators on $\HHh$
and $\KKk$, respectively.\\
\textit{2000 MSC: Primary 47A53, 47A55.}\\
Key words: group action, closure of orbit, index of operator, Fredholm operators, semi-Fredholm operators,
closed range operators, equivalence of operators, operator ranges.
\end{abstract}
\maketitle

%%\thisPaper{pn??}
%%W pliku ponizej wystepuja procenty %% - tekst ukryty, bo niepotrzebny; zostawiony na wypadek korekty

\SECT{Introduction}

For a Hilbert space $\HHh$, denote by $\GGg(\HHh)$ and $\UUu(\HHh)$ the group of all isomorphisms
(i.e. linear homeomorphisms) of $\HHh$ and the unitary group of $\HHh$, respectively. Additionally, let $I_{\HHh}$
be the identity operator on $\HHh$.
For study of the geometry of the Banach space $\BBb(\HHh,\KKk)$ of all bounded linear operators from the Hilbert space
$\HHh$ into a Hilbert space $\KKk$ the natural action $(\GGg(\HHh) \times \GGg(\KKk)) \times \BBb(\HHh,\KKk) \ni ((L,G),X)
\mapsto G X L^{-1} \in \BBb(\HHh,\KKk)$ of the group $\GGg(\HHh) \times \GGg(\KKk)$ plays an important role.
Especially the literature concerning the orbits (and their closures) under this action of closed range operators is still
growing up (see e.g. \cite{corach} and references there). This includes the theory of Fredholm and semi-Fredholm operators,
for which the index `$\ind$' is naturally defined and well behaves. However, most of results on closed range operators
is settled in a separable (infinite-dimensional) Hilbert space. Also hardly ever operators with nonclosed ranges
are considered when speaking about indices or orbits under the group action. The aim of the paper is to fill this lack
and give a full answer (see \THM{orbit}) to the following problem:
\begin{quote}
\textit{Given an operator $A \in \BBb(\HHh,\KKk)$, describe the closure of the orbit of $A$ under the natural action
of $\GGg(\HHh) \times \GGg(\KKk)$.}
\end{quote}
We shall also solve analogous problems for group orbits with respect to the following subgroups of $\GGg(\HHh) \times
\GGg(\KKk)$ (under the same action): $\GGg(\HHh) \times \{I_{\KKk}\}$ and $\{I_{\HHh}\} \times \GGg(\KKk)$ (\THM{G}),
$\UUu(\HHh) \times \UUu(\KKk)$ (\PRO{UU}), $\UUu(\HHh) \times \{I_{\HHh}\}$ and $\{I_{\HHh}\} \times \UUu(\KKk)$ (\THM{U};
note that the orbit of $A$ with respect to the groups $\GGg(\HHh) \times \UUu(\KKk)$, $\UUu(\HHh) \times \GGg(\KKk)$
and $\GGg(\HHh) \times \GGg(\KKk)$ coincide, which follows e.g. from Theorem 3.1 of \cite{ranges}---see \PRO{orbit}).
The main results on these are settled in any Hilbert spaces (without restriction on dimensions) and deal with any bounded
operator.\par
In comparison to the characterization of the members of the orbit of an operator under the action of $\GGg(\HHh) \times
\GGg(\KKk)$ which highly depends on the geometry of the range of the operator (see Theorem 3.4 of \cite{ranges}
or \PRO{orbit} in Section 3 below), the description of the closure of this orbit is given only in terms of four indices
(that is, cardinal numbers), which seems to be surprising. Among many applications one may find: generalizations
of results of Izumino and Kato \cite{i-k} on the closure of $\GGg(\HHh)$ and of Mbekhta \cite{mbekhta} on the boundaries
of sets of semi-Fredholm operators of arbitrarily fixed index, an extension of the notion of the index `$\ind$'
and characterization of all closed two-sided ideals of $\BBb(\HHh)$ for nonseparable $\HHh$.\par
\textbf{Notation.} In this paper $\HHh$ and $\KKk$ denote (complex) Hilbert spaces. $\BBb(\HHh,\KKk)$ is the Banach space
of all bounded operators from $\HHh$ into $\KKk$; $\GGg(\HHh,\KKk)$ and $\UUu(\HHh,\KKk)$ are, respectively,
the set of all isomorphisms and unitary operators from $\HHh$ onto $\KKk$. When $\KKk = \HHh$, we write $\BBb(\HHh)$,
$\GGg(\HHh)$ and $\UUu(\HHh)$ instead of $\BBb(\HHh,\HHh)$, $\GGg(\HHh,\HHh)$ and $\UUu(\HHh,\HHh)$. Additionally,
$\BBb_+(\HHh)$ stands for the set of all nonnegative (bounded) operators on $\HHh$. Whenever $V$ is a closed subspace
of $\HHh$, $\HHh \ominus V$ and $P_V \in \BBb(\HHh)$ denote the orthogonal complement of $V$ in $\HHh$ and the orthogonal
projection onto $V$. $I_{\HHh}$ is the identity operator on $\HHh$ and $\dim \HHh$ is the dimension of $\HHh$ as a Hilbert
space (i.e. $\dim \HHh$ is the power of any orthonormal basis of $\HHh$). For $A \in \BBb(\HHh,\KKk)$, $\NnN(A)$, $\RrR(A)$
and $\overline{\RrR}(A)$ denote the kernel, the range and the closure of the range of $A$. The \textit{polar decomposition
of $A$} has the form $A = Q |A|$ where $|A| := \sqrt{A^*A}$ and $Q$ is a partial isometry such that $\NnN(Q) = \NnN(A)$.
Whenever we speak about convergence, closures, open sets, etc., in $\BBb(\HHh,\KKk)$, all they are understood in the norm
topology. By $\Bb(\RRR_+)$ we denote the $\sigma$-algebra of all Borel subsets of $\RRR_+ := [0,+\infty)$.

\SECT{Operator ranges}

In this part we recall the characterization of operator ranges and we define two auxiliary indices of such spaces
which will find an application in the sequel.\par
Whenever $\EEe$ is a pre-Hilbert space, $\bar{\EEe}$ stands for its completion. A pre-Hilbert space $\EEe$ is said to be
an \textit{operator range} iff there is a Hilbert space $\HHh$ and a bounded operator $T\dd \HHh \to \bar{\EEe}$ such
that $\RrR(T) = \EEe$.\par
Whenever $\HHh_1, \HHh_2,\ldots$ is a sequence of mutually orthogonal closed subspaces of a Hilbert space $\HHh$,
let us denote by $\SsS(\HHh_1,\HHh_2,\ldots)$ the linear subspace of $\HHh$ consisting of all vectors $x \in \HHh$
of the form $x = \sum_{n=1}^{\infty} x_n$ where $x_n \in \HHh_n$ and $\sum_{n=1}^{\infty} 4^n \|x_n\|^2 < +\infty$.\par
A fundamental result on operator ranges is the following

\begin{thm}{range}
For a pre-Hilbert space $\EEe$ \tfcae
\begin{enumerate}[\upshape(a)]
\item $\EEe$ is an operator range,
\item there is a sequence $\HHh_1,\HHh_2,\ldots$ of mutually orthogonal closed subspaces of $\bar{\EEe}$
   such that $\EEe = \SsS(\HHh_1,\HHh_2,\ldots)$.
\end{enumerate}
\end{thm}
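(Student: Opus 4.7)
The plan is to prove the two implications separately, using a direct construction for (b)$\Rightarrow$(a) and the spectral theorem for (a)$\Rightarrow$(b). The role of the weights $4^n$ in the definition of $\SsS(\HHh_1,\HHh_2,\ldots)$ will match the geometric scaling $2^{-n}$ that naturally appears on both sides.

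For (b)$\Rightarrow$(a), I would take $\HHh$ to be the closed linear span of $\HHh_1,\HHh_2,\ldots$ inside $\bar{\EEe}$ and define $T\dd\HHh\to\bar{\EEe}$ on a vector of the form $y=\sum_{n=1}^{\infty}y_n$ (with $y_n\in\HHh_n$, $\sum\|y_n\|^2<\infty$) by $Ty=\sum_{n=1}^{\infty}2^{-n}y_n$. Then $\|T\|\leqsl 1/2$, and substituting $x_n=2^{-n}y_n$ immediately identifies $\RrR(T)$ with the set of all $\sum x_n$, $x_n\in\HHh_n$, $\sum 4^n\|x_n\|^2<\infty$, that is, $\RrR(T)=\EEe$.

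For (a)$\Rightarrow$(b) I would proceed in three steps. First, I would reduce to a nonnegative operator on $\bar{\EEe}$: since $\RrR(T)=\RrR(|T^*|)$ (a consequence of Douglas' range-inclusion theorem applied to $TT^*=|T^*|\cdot|T^*|$), I may replace $T$ by $S:=|T^*|\in\BBb_+(\bar{\EEe})$, and after a scalar normalization I may even assume $\|S\|\leqsl 1/2$. Second, I note that $\overline{\RrR(S)}\supseteq\EEe$ is dense in $\bar{\EEe}$, and self-adjointness forces $\NnN(S)=\bar{\EEe}\ominus\overline{\RrR(S)}=\{0\}$, so the spectral measure $E$ of $S$ has no mass at $0$. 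Third, I set
\[
 \HHh_n\ :=\ E\bigl((2^{-n-1},2^{-n}]\bigr)\bar{\EEe}\qquad(n\geqsl 1),
\]
obtaining a sequence of mutually orthogonal closed subspaces of $\bar{\EEe}$, invariant under $S$, whose closed linear span is all of $\bar{\EEe}$, and on each of which $S$ satisfies $2^{-n-1}\|x_n\|\leqsl\|Sx_n\|\leqsl 2^{-n}\|x_n\|$.

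Given this, the equality $\RrR(S)=\SsS(\HHh_1,\HHh_2,\ldots)$ is a short two-way verification. If $y=Sx$ and $x=\sum x_n$ with $x_n\in\HHh_n$ and $\sum\|x_n\|^2<\infty$, then $y_n:=Sx_n\in\HHh_n$ and $4^n\|y_n\|^2\leqsl\|x_n\|^2$, which is summable; conversely, given $y=\sum y_n$ with $\sum 4^n\|y_n\|^2<\infty$, the bound from below on $S|_{\HHh_n}$ makes $x_n:=(S|_{\HHh_n})^{-1}y_n$ well-defined with $\|x_n\|^2\leqsl 4\cdot 4^n\|y_n\|^2$, so $x=\sum x_n\in\bar{\EEe}$ converges and $Sx=y$. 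The only real technical point is the reduction to a nonnegative self-adjoint operator on $\bar{\EEe}$; everything afterwards is a routine estimate driven by the geometric partition of the spectrum.
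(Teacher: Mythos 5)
The paper does not prove this theorem but cites Fillmore and Williams \cite{ranges}, and your argument is correct and is in substance the proof from that source: reduce to the nonnegative operator $|T^*|\in\BBb_+(\bar\EEe)$ via $\RrR(T)=\RrR(|T^*|)$, kill the point mass of the spectral measure at $0$ by the density of $\RrR(|T^*|)$, and partition the spectrum into the dyadic intervals $(2^{-n-1},2^{-n}]$, with the converse direction handled by the obvious weighted shift. Both directions are verified correctly, so there is nothing to flag.
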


For proof, see e.g. \cite{ranges}.

\begin{dfn}{index}
Let $\EEe$ be a pre-Hilbert space. The cardinal
$$
\IC(\EEe) = \min \{\dim (\bar{\EEe} \ominus V)\dd\ V \textup{ is a complete subspace of $\EEe$}\}
$$
is called the \textit{index of incompleteness of $\EEe$}. The \textit{binary index of $\EEe$}, in symbol $\bB(\EEe)$,
is defined as follows: $\bB(\EEe) = 1$ iff $\EEe$ contains a (necessarily complete) subspace isomorphic to $\bar{\EEe}$;
otherwise $\bB(\EEe) = 0$.
\end{dfn}

The following result shows how useful are just defined indices. Its proof is left as an exercise (the points (d) and (e)
of it follow from \THM{range}).

\begin{pro}{indices}
Let $\EEe$ be a pre-Hilbert space.
\begin{enumerate}[\upshape(a)]
\item $\EEe$ is complete iff $\IC(\EEe) = 0$. If $\EEe$ is incomplete, $\IC(\EEe)$ is infinite.
\item If $\IC(\EEe) < \dim \bar{\EEe}$, then $\bB(\EEe) = 1$.
\item If $\bB(\EEe) = 1$, there is a complete subspace $V$ of $\EEe$ such that $\dim V = \dim \bar{\EEe}$
   and $\dim (\bar{\EEe} \ominus V) = \IC(\EEe)$.
\item If $\EEe$ is a range space, then for each $\beta < \dim \bar{\EEe}$ there is a complete subspace of $\EEe$
   of dimension $\beta$.
\item If $\EEe$ is a range space and $\bB(\EEe) = 0$, then $\dim \bar{\EEe}$ is an (infinite) limit cardinal of countable
   cofinality.
\end{enumerate}
\end{pro}

It is clear that if $\EEe$ and $\EEe'$ are two linearly isometric pre-Hilbert spaces, then $\IC(\EEe) = \IC(\EEe')$
and $\bB(\EEe) = \bB(\EEe')$. This property combined with the well known fact that the ranges of a bounded operator
and its adjoint operator are linearly isometric yields

\begin{pro}{adjoint}
If $A$ is a bounded operator between two Hilbert spaces, then $\IC(\RrR(A)) = \IC(\RrR(A^*))$
and $\bB(\RrR(A)) = \bB(\RrR(A^*))$.
\end{pro}

With use of Theorem 3.3 of \cite{ranges} on linearly isometric operator ranges we now give formulas for both the indices
$\IC(\EEe)$ and $\bB(\EEe)$ in case $\EEe$ is an operator range.

\begin{pro}{IC,b}
Let $\EEe$ be an infinite-dimensional operator range and $\HHh_1,\HHh_2,\ldots$ be a sequence as in the point \textup{(b)}
of \textup{\THM{range}}; that is, $\EEe = \SsS(\HHh_1,\HHh_2,\ldots)$.
\begin{enumerate}[\upshape(a)]
\item There is $N \geqsl 1$ such that $IC(\EEe) = \sum_{n=N}^{\infty} \dim \HHh_n$. More precisely,
   $\IC(\EEe) = \min \{\sum_{n=m}^{\infty} \dim \HHh_n\dd\ m \geqsl 1\}$.
\item $\bB(\EEe) = 1$ iff $\dim \HHh_j = \dim \bar{\EEe}$ for some $j \geqsl 1$.
\end{enumerate}
\end{pro}
\begin{proof}
Let $V$ be a complete subspace of $\EEe$. Note that the assertions of both the points (a) and (b) follow
from the following property:
\begin{quote}
($\star$)\quad \begin{tabular}{l}there is $U \in \UUu(\bar{\EEe}\,)$ and $N \geqsl 1$ such that\\
$U(\EEe) = \EEe$ and $U(V) \subset \bigoplus_{n=1}^N \HHh_n$,\end{tabular}
\end{quote}
which we now prove. Let $\EEe' = \EEe \cap (\bar{\EEe} \ominus V)$. Observe that $\EEe'$ is an operator range
(one does not need Corollary~2 of Theorem~2.2 of \cite{ranges} to see this) and therefore
$\EEe' = \SsS(\HHh_1',\HHh_2',\ldots)$ for suitable spaces $\HHh_1',\HHh_2',\ldots$ But then
$\EEe = \SsS(V,\HHh_1',\HHh_2',\ldots) = \SsS(\HHh_1,\HHh_2,\ldots)$ and it follows from the proof of Theorem~3.3
of \cite{ranges} that there are $U$ and $N$ satisfying ($\star$).
\end{proof}

\SECT{Two-sided actions}

\begin{dfn}{orb}
Let $\HHh$ and $\KKk$ be Hilbert spaces and $\GGg_1$ and $\GGg_2$ be subgroups of $\GGg(\HHh)$ and $\GGg(\KKk)$
respectively. For $A \in \BBb(\HHh,\KKk)$ let $\oRB_{\GGg_1}^{\GGg_2}(A)$ be the orbit of $A$ with respect to the left
action of $\GGg_1 \times \GGg_2$ on $\BBb(\HHh,\KKk)$ given by $(\GGg_1 \times \GGg_2) \times \BBb(\HHh,\KKk) \ni
((G_1,G_2),X) \mapsto G_2 X G_1^{-1} \in \BBb(\HHh,\KKk)$; that is, $\oRB_{\GGg_1}^{\GGg_2}(A) = \{G_2 A G_1^{-1}\dd\
G_j \in \GGg_j\}$. The closure in $\BBb(\HHh,\KKk)$ of this orbit is denoted by $\cloRB_{\GGg_1}^{\GGg_2}(A)$.\par
When $\GGg_j$ coincides with the whole group of invertible operators or with the unitary group, then $\GGg_j$
in the notation $\oRB_{\GGg_1}^{\GGg_2}$ will be replaced by the letter $G$ or $U$, respectively. When $\GGg_j$ is
the trivial group (consisting only of the identity operator), $\GGg_j$ is omitted in the latter notation.
\end{dfn}

\noindent\textbf{Notation.} For $A \in \BBb(\HHh,\KKk)$ let $\Upsilon(A)$ constist of all closed linear subspaces $V$
of $\HHh$ for which there is a positive constant $c$ such that $\|Ax\| \geqsl c \|x\|$ for any $x \in V$. Observe that
$A(V)$ is closed in $\KKk$ for every $V \in \Upsilon(A)$. Additionally, to simplify further arguments, for each
$V \in \Upsilon(A)$ we use the following notation: $\xi_A(V) = (\dim (\HHh \ominus V),\dim V,\dim(\KKk \ominus A(V))$.
Put $\Lambda(A) = \{\xi_A(V)\dd\ V \in \Upsilon(A)\}$. Notice that $\Upsilon(A) = \Upsilon(|A|)$.\par

For completeness of the lecture, we begin with recalling the characterization of members of suitable orbits.

\begin{pro}{orbit}
Let $A \in \BBb(\HHh,\KKk)$. Let $\NNn_A$ be the set of all $B \in \BBb(\HHh,\KKk)$ such that
   $\dim \NnN(B) = \dim \NnN(A)$ and $\dim \NnN(B^*) = \dim \NnN(A^*)$.
\begin{enumerate}[\upshape(a)]
\item $\oRB_G^G(A)$ consists of all $B \in \NNn_A$ such that the ranges of $A$ and $B$ are isomorphic.
\item $\oRB_U^U(A)$ is the set of all $B \in \NNn_A$ such that $AA^*$ and $BB^*$ are unitarily equivalent.
\item $\oRB_G(A)$ consists of all $B \in \NNn_A$ such that $\RrR(B) = \RrR(A)$.
\item $\oRB_U(A)$ is the set of all $B \in \NNn_A$ such that $BB^* = AA^*$.
\item $\oRB_U^G(A) = \oRB_G^U(A) = \oRB_G^G(A)$, $\oRB^G(A) = \{B^*\dd\ B \in \oRB_G(A^*)\}$
   and $\oRB^U(A) = \{B^*\dd\ B \in \oRB_U(A^*)\}$.
\end{enumerate}
\end{pro}
\begin{proof}
The point (c) is Corollary 1 of Theorem 2.1 of \cite{ranges}; (a) and the first assertion of (e) is proved also
in \cite{ranges}, see Theorem 3.4 and its proof. To show the sufficiency of the conditions of (d), observe that if $Q_A$
and $Q_B$ are the partial isometries appearing in the polar decompositions of $A$ and $B \in \NNn_A$, respectively,
then $Q_A^*Q_B\bigr|_{\overline{\RrR}(B^*)}$ is extendable to a unitary $U \in \UUu(\HHh)$ which automatically satisfies
$B = AU$. Finally, notice that (b) basicly follows from (d) and the remainder of (e) is immediate.
\end{proof}

In \LEM{main}--(c) we shall show, independently of the foregoing result, that $\cloRB_G^G(A) = \cloRB_G^U(A)
= \cloRB_U^G(A)$ for each $A$.\par
The proof of the following easy result is omitted.

\begin{lem}{dim}
If $A \in \BBb(\HHh,\KKk)$, $W$ is a closed subspace of $\HHh$ such that $W \cap \NnN(A) = \{0\}$,
then $\dim (W \ominus V) = \dim (\overline{A(W)} \ominus A(V))$ for any space $V \in \Upsilon(A)$ contained in $W$.
\end{lem}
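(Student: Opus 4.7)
The strategy is to build a bounded linear operator between $W \ominus V$ and $\overline{A(W)} \ominus A(V)$ which is injective and has dense range, and then appeal to the standard fact that the existence of such an operator forces the two Hilbert spaces to have equal Hilbert dimension.

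First, since $V \in \Upsilon(A)$ the subspace $A(V)$ is closed in $\KKk$ (as noted in the paragraph introducing $\Upsilon(A)$), hence closed in $\overline{A(W)}$, so the orthogonal projection $P$ of $\overline{A(W)}$ onto $\overline{A(W)} \ominus A(V)$ is well defined. I would introduce
\[S \dd W \ominus V \longrightarrow \overline{A(W)} \ominus A(V), \qquad S(u) := P(Au).\]
Injectivity is immediate: $S(u) = 0$ means $Au \in A(V)$, so $Au = Av$ for some $v \in V$; then $u - v \in W \cap \NnN(A) = \{0\}$, and since $u \in W \ominus V$ and $v \in V$ are orthogonal, both vanish. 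For density of the range, decompose an arbitrary $w \in W$ as $w = v + u$ with $v \in V$, $u \in W \ominus V$; then $P(Aw) = P(Av) + P(Au) = S(u)$, so $S(W \ominus V) = P(A(W))$. Because $A(W)$ is dense in $\overline{A(W)}$ and $P$ is continuous and surjective onto $\overline{A(W)} \ominus A(V)$, this image is dense in the target.

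To conclude I would invoke the following standard fact: a bounded linear operator $T \dd \HHh_1 \to \HHh_2$ between Hilbert spaces which is injective and has dense range satisfies $\dim \HHh_1 = \dim \HHh_2$. In infinite dimensions this follows from the coincidence of Hilbert dimension with density character (so density of $T(\HHh_1)$ gives $\dim \HHh_2 \leqsl \dim \HHh_1$), combined with the symmetric fact that $T^*$ is likewise injective with dense range (yielding the reverse inequality); in finite dimensions it is trivial by rank-nullity. Applied to $S$, this delivers $\dim (W \ominus V) = \dim (\overline{A(W)} \ominus A(V))$.

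No serious obstacle is anticipated. The only point requiring any care is the last general fact, but it is thoroughly standard; all other steps are direct verifications based on the hypothesis $W \cap \NnN(A) = \{0\}$ and the definition of $\Upsilon(A)$.
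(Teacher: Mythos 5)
The paper explicitly omits the proof of this lemma, so there is no authorial argument to compare against; you are therefore free to choose any valid route, and the one you give is correct and natural. Defining $S\colon W\ominus V\to\overline{A(W)}\ominus A(V)$ by $S u = P(Au)$, where $P$ is the orthogonal projection onto $\overline{A(W)}\ominus A(V)$, you correctly verify injectivity using $W\cap\NnN(A)=\{0\}$ together with the orthogonality of $W\ominus V$ and $V$, and you correctly obtain density of the range from $S(W\ominus V)=P(A(W))$ together with density of $A(W)$ in $\overline{A(W)}$. The final appeal to the fact that a bounded injective operator with dense range between Hilbert spaces preserves Hilbert dimension is legitimate: in infinite dimensions it follows from the identification of Hilbert dimension with density character applied to both $S$ and $S^*$ (the latter being injective with dense range because $\NnN(S^*)=\RrR(S)^\perp=\{0\}$ and $\overline{\RrR(S^*)}=\NnN(S)^\perp$), and the finite-dimensional cases reduce to rank--nullity since a dense finite-dimensional subspace is the whole space. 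No gaps.
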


The next two results are our main tools.

\begin{pro}{inside}
For every $A \in \BBb(\HHh,\KKk)$ and each $\epsi > 0$ there is $V \in \Upsilon(A)$ such that
$|A|(V) = V$ and $\|A - A P_V\| \leqsl \epsi$.
\end{pro}
\begin{proof}
Let $E\dd \Bb(\RRR_+) \to \BBb(\HHh)$ be the spectral measure of $|A|$. It suffices to put $P = E([\epsi,+\infty))$
and $V = \RrR(P)$.
\end{proof}

A part of the point (a) of the following result is certainly known in perturbation theory. However, its short proof
is used to establish the remainder of (a) and therefore below we give full details.

\begin{lem}{main}
Let $A \in \BBb(\HHh,\KKk)$.
\begin{enumerate}[\upshape(a)]
\item If $A_1,A_2,\ldots \in \BBb(\HHh,\KKk)$ converge to $A$, then
   $$
   \Upsilon(A) \subset \bigcup_{n=1}^{\infty} \bigcap_{k=1}^{\infty} \Upsilon(A_k) \quad \textup{and} \quad
   \Lambda(A) \subset \bigcup_{n=1}^{\infty} \bigcap_{k=1}^{\infty} \Lambda(A_k).
   $$
   What is more, for each $V \in \Upsilon(A)$ there is $N \geqsl 1$ and a sequence $(Z_n)_{n=N}^{\infty} \in \UUu(\KKk)$
   such that $V \in \Upsilon(A_n)$, $Z_n(A(V)) = A_n(V)$ ($n \geqsl N$) and $Z_n P_{A(V)} \to P_{A(V)}\ (n \to \infty)$.
\item $\Lambda(G A L^{-1}) = \Lambda(A)$ for each $G \in \GGg(\KKk)$ and $L \in \GGg(\HHh)$.
%%\item $\Lambda(A) = \{\xi_A(V)\dd\ V \in \Upsilon(A),\ V \subset \overline{\RrR}(A^*)\}$
%%   and $$\Lambda(A^*) = \{(\alpha,\beta,\alpha')\dd\ (\alpha',\beta,\alpha) \in \Lambda(A)\}.$$
\item $\cloRB_G^G(A) = \cloRB_G^U(A) = \cloRB_U^G(A)$ and $\cloRB_G^G(A)$ coincides with the set of all
   $C \in \BBb(\HHh,\KKk)$ such that $\Lambda(C) \subset \Lambda(A)$.
\end{enumerate}
\end{lem}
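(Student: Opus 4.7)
The plan for (a) is a direct norm estimate: if $V\in\Upsilon(A)$ with lower bound $c>0$, then for $k$ large with $\|A_k-A\|<c/2$ one gets $\|A_k x\|\geqsl(c/2)\|x\|$ on $V$, so $V\in\Upsilon(A_k)$. To produce the $Z_n$'s and obtain the $\Lambda$-inclusion, I would first show $\|P_{A(V)}-P_{A_n(V)}\|\to 0$ by bounding $\|(I-P_{A_n(V)})P_{A(V)}\|$ and $\|(I-P_{A(V)})P_{A_n(V)}\|$ in terms of $\|A_n-A\|$ and the lower bound $c$; for a unit $w=Av\in A(V)$ we have $\|v\|\leqsl c^{-1}$ and $A_nv\in A_n(V)$ is within $c^{-1}\|A_n-A\|$ of $w$, and symmetrically. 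Then I invoke the classical fact that two orthogonal projections $P,Q$ with $\|P-Q\|<1$ are intertwined by a unitary $Z$ with $\|Z-I\|\to 0$ as $\|P-Q\|\to 0$ (e.g. via the polar decomposition of $QP+(I-Q)(I-P)$). This yields $Z_n\in\UUu(\KKk)$ with $Z_n(A(V))=A_n(V)$ and $Z_n\to I$; in particular $\dim(\KKk\ominus A_n(V))=\dim(\KKk\ominus A(V))$ for $n$ large, so $\xi_{A_n}(V)=\xi_A(V)$ eventually.

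Part (b) is algebraic: $V\mapsto L(V)$ is a bijection between $\Upsilon(A)$ and $\Upsilon(GAL^{-1})$ (the lower bound is adjusted by the factor $\|G^{-1}\|^{-1}\|L\|^{-1}$), and the Hilbert-space automorphisms $L,G$ preserve dimensions of closed subspaces and of their orthogonal complements (via the isomorphisms induced on the quotient Hilbert spaces). Hence $\xi_{GAL^{-1}}(L(V))=\xi_A(V)$ and $\Lambda(GAL^{-1})=\Lambda(A)$.

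For (c), the inclusions $\cloRB_U^G(A),\cloRB_G^U(A)\subset\cloRB_G^G(A)$ are tautological, and $\cloRB_G^G(A)\subset\{C:\Lambda(C)\subset\Lambda(A)\}$ follows from (a) and (b): if $G_nAL_n^{-1}\to C$, then $\Lambda(G_nAL_n^{-1})=\Lambda(A)$ by (b), so by (a) $\Lambda(C)\subset\Lambda(A)$. The substantive step is $\{C:\Lambda(C)\subset\Lambda(A)\}\subset\cloRB_U^G(A)\cap\cloRB_G^U(A)$. Given such a $C$ and $\epsi>0$, apply \PRO{inside} to $C$ to obtain $V\in\Upsilon(C)$ with $|C|(V)=V$ and $\|C-CP_V\|\leqsl\epsi$. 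Since $\xi_C(V)\in\Lambda(A)$, I would select $W\in\Upsilon(A)$ realizing $\xi_A(W)=\xi_C(V)$ together with $\|A-AP_W\|\leqsl\epsi$. The dimension equalities then furnish unitaries $U_1\colon\HHh\ominus V\to\HHh\ominus W$, $U_0\colon\KKk\ominus A(W)\to\KKk\ominus C(V)$, and $\tilde U\colon A(W)\to C(V)$. Define $L^{-1}|_V=(A|_W)^{-1}\tilde U^{-1}C|_V\colon V\to W$ (bounded invertible, as $A|_W$ is an isomorphism by $W\in\Upsilon(A)$) and $L^{-1}|_{\HHh\ominus V}=U_1$, and set $U=\tilde U\oplus U_0\in\UUu(\KKk)$. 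A direct check gives $UAL^{-1}|_V=C|_V$ exactly, while $\|(UAL^{-1}-CP_V)|_{\HHh\ominus V}\|\leqsl\|A|_{\HHh\ominus W}\|=\|A-AP_W\|\leqsl\epsi$, so $\|C-UAL^{-1}\|\leqsl 2\epsi$. The inclusion into $\cloRB_G^U(A)$ follows by the symmetric construction (or by taking adjoints, using \PRO{adjoint}).

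The principal obstacle will be realizing a $W$ satisfying \emph{both} $\xi_A(W)=\xi_C(V)$ and $\|A-AP_W\|\leqsl\epsi$ simultaneously: \PRO{inside} applied to $A$ supplies a $W_\epsi$ with the norm condition (namely $\RrR(E_{|A|}([\epsi,\|A\|]))$), whose $\xi$-invariant is rigidly determined by the spectrum of $|A|$; the hypothesis $\Lambda(C)\subset\Lambda(A)$ supplies some $W_0$ with the correct $\xi$ but no norm control. I expect to resolve this by taking $W$ to be a general $|A|$-invariant subspace $\RrR(E_{|A|}(F))$ for a Borel $F\subset[\epsi,\|A\|]$ and, when the available spectral dimensions do not cover the desired triple, by augmenting with small orthogonal pieces chosen through the $\SsS$-decomposition of operator ranges (\THM{range}) together with \PRO{IC,b} to fit arbitrary admissible triples $\xi\in\Lambda(A)$ inside $\RrR(E_{|A|}([\epsi,\|A\|]))$ while preserving the norm estimate.
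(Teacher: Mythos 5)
Your treatment of (a) and (b) is correct. In (a) you in fact prove something slightly stronger than the paper: you get $\|P_{A(V)}-P_{A_n(V)}\|\to 0$ (hence $Z_n\to I$ in norm), whereas the paper contents itself with $Z_n P_{A(V)}\to P_{A(V)}$. The two-sided estimate for $\|(I-P_{A_n(V)})P_{A(V)}\|$ and its symmetric counterpart is legitimate (the uniform bound $\|v\|\leqsl c^{-1}$ for unit $Av$ is exactly what makes the distance estimate uniform), and the classical two-projections lemma then produces the $Z_n$; this is a cleaner route than the paper's, which takes $P=P_{A(V)}$, observes $PA_n\bigr|_V\to A\bigr|_V\in\GGg(V,A(V))$, and extracts a unitary from the polar decomposition of $T_n=P_n\bigr|_{A(V)}\in\GGg(A(V),A_n(V))$.

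The gap is in (c), and it is exactly the one you flag at the end. You insist on a $W\in\Upsilon(A)$ satisfying \emph{both} $\xi_A(W)=\xi_C(V)$ and $\|A-AP_W\|\leqsl\epsi$, and your only error term is $\|A\bigr|_{\HHh\ominus W}\|$; but there is no reason such a $W$ should exist, and your proposed fix --- fitting an arbitrary admissible triple inside $\RrR\bigl(E_{|A|}([\epsi,\|A\|])\bigr)$ by augmenting with ``small orthogonal pieces'' --- is both vague and, as stated, unsound: if the essential spectrum of $|A|$ is bounded below then that spectral range is a single fixed subspace with one fixed $\xi$-triple, and any pieces you adjoin orthogonally must still satisfy the lower bound defining $\Upsilon(A)$ while also changing $\|A-AP_W\|$, so you cannot control both. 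The paper sidesteps the whole problem: rather than requiring $\|A-AP_W\|$ small, it makes the restriction of $L^{-1}$ to $\HHh\ominus V$ shrink. Concretely, keep your definition $L^{-1}\bigr|_V=(A\bigr|_W)^{-1}\tilde U^{-1}C\bigr|_V$ and $U=\tilde U\oplus U_0$, but set $L^{-1}\bigr|_{\HHh\ominus V}=\tfrac1n U_1$ (equivalently, the paper's $G_n=nU_0$ on $\HHh\ominus W$). Then $UAL^{-1}\bigr|_V=C\bigr|_V$ still holds exactly, while
\begin{equation*}
\bigl\|UAL^{-1}\bigr|_{\HHh\ominus V}\bigr\|\leqsl\tfrac1n\|A\|\longrightarrow 0,
\end{equation*}
so $\|C-UAL^{-1}\|\leqsl\|C-CP_V\|+\tfrac1n\|A\|\leqsl\epsi$ for $n$ large. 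Thus any $W\in\Upsilon(A)$ with $\xi_A(W)=\xi_C(V)$ --- which exists purely from $\Lambda(C)\subset\Lambda(A)$ --- works; no norm control on $W$ is needed, and the spectral-decomposition detour you contemplate can be dropped entirely.
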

\begin{proof}
(a): Suppose $V \in \Upsilon(A)$. Let $P\dd \KKk \to A(V)$ be the orthogonal projection. Then we have
$P A_n\bigr|_V \to A\bigr|_V \in \GGg(V,A(V))$ and thus $P A_n\bigr|_V \in \GGg(V,A(V))$ as well for $n \geqsl N$.
This implies that $V \in \Upsilon(A_n)$,
\begin{equation}\label{eqn:aux3}
A_n(V) \in \Upsilon(P) \quad \textup{and} \quad P(A_n(V)) = A(V)
\end{equation}
and $\NnN(P) + A_n(V)$ is closed. So, $(\alpha,\beta,\gamma_n) := \xi_{A_n}(V) \in \Lambda(A_n)$ for $n \geqsl N$.
Now let $F_n$ denote the orthogonal complement (in $\KKk$) of $\NnN(P) + A_n(V)$. We see that
$\gamma_n = \dim(\KKk \ominus A_n(V)) = \dim (\NnN(P) \oplus F_n) = \dim (\NnN(P) \oplus P(F_n))
= \dim (\KKk \ominus P(A_n(V))) = \dim(\KKk \ominus A(V))$ which shows that
\begin{equation}\label{eqn:aux4}
\dim(\KKk \ominus A_n(V)) = \dim(\KKk \ominus A(V))\ (n \geqsl N)
\end{equation}
and $\xi_A(V) \in \Lambda(A_n)$. From now on, $n \geqsl N$. Let $P_n$ be the orthogonal projection of $\KKk$ onto $A_n(V)$.
Since $P_n A_n P_V = A_n P_V \to A P_V\ (n\to\infty)$ and simultaneously $\lim_{n\to\infty} P_n(A_n - A)P_V = 0$, we get
$\lim_{n\to\infty} P_n A P_V = A P_V$. Since $A\bigr|_V \in \GGg(V,A(V))$, the latter relation is equivalent to
\begin{equation}\label{eqn:aux6}
P_n P_{A(V)} \to P_{A(V)}\ (n \to \infty).
\end{equation}
Put $T_n = P_n\bigr|_{A(V)} \in \BBb(A(V),A_n(V))$. A straightforward calculation shows that
$T_n^* = P_{A(V)}\bigr|_{A_n(V)}$ (we compute $T_n^*$ as the adjoint of a member of $\BBb(A(V),A_n(V))$).
By \eqref{eqn:aux3}, $T_n^* \in \GGg(A_n(V),A(V))$ and hence $T_n \in \GGg(A(V),A_n(V))$. Let $T_n = Q_n |T_n|$
be the polar decomposition of $T_n$; that is, $Q_n \in \UUu(A(V),A_n(V))$ and $|T_n| \in \BBb_+(A(V))$.
Now by \eqref{eqn:aux4}, there is a unitary operator $Z_n$ on $\KKk$ which extends $Q_n$. It remains to prove that
\begin{equation}\label{eqn:z}
\lim_{n\to\infty} Z_n P_{A(V)} = P_{A(V)}.
\end{equation}
It follows from \eqref{eqn:aux6} that $\lim_{n\to\infty} P_{A(V)} P_n P_{A(V)} = P_{A(V)}$. We conclude from this that
$\lim_{n\to\infty} T_n^* T_n = I_{A(V)}$ and thus $\lim_{n\to\infty} |T_n| = I_{A(V)}$ and $|T_n|^{-1} \to I_{A(V)}\
(n\to\infty)$ as well. So, $Z_n P_{A(V)} = Q_n P_{A(V)} = T_n |T_n|^{-1} P_{A(V)} = (P_n P_{A(V)}) |T_n|^{-1} P_{A(V)}
\to P_{A(V)}\ (n\to\infty)$, which finishes the proof of (a).\par
The point (b) is immediate.
%%To see the first claim of (c), for each $V_0 \in \Upsilon(A)$ put
%%$V = (V_0 + \NnN(A)) \cap \overline{\RrR}(A^*)\ (= A^{-1}(A(V_0)) \cap \overline{\RrR}(A^*))$.
%%Note that $V \in \Upsilon(A)$ and use \LEM{dim} to show that $\xi_A(V) = \xi_A(V_0)$. To prove the second claim
%%of (c), it suffices to show that $(\alpha',\beta,\alpha) \in \Lambda(A^*)$ for any $(\alpha,\beta,\alpha')
%%\in \Lambda(A)$. By the previous part of the proof, there is a subspace $V \in \Upsilon(A)$ of $\overline{\RrR}(A^*)$
%%such that $\xi_A(V) = (\alpha,\beta,\alpha')$. Let $A = Q |A|$ be the polar decomposition of $A$. Put $W = Q(V)$.
%%Note that $W \in \Upsilon(A^*)$ and use again \LEM{dim} to check that $\xi_{A^*}(W) = (\alpha',\beta,\alpha)$.\par
To prove (c), suppose $\Lambda(C) \subset \Lambda(A)$. For each $\epsi > 0$ we shall find $U \in \UUu(\KKk)$
and $G \in \GGg(\HHh)$ such that
\begin{equation}\label{eqn:aux0}
\|C - U A G^{-1}\| \leqsl \epsi.
\end{equation}
By \PRO{inside}, there is $V \in \Upsilon(C)$ such that
\begin{equation}\label{eqn:aux1}
\|C - C P_V\| \leqsl \frac12 \epsi.
\end{equation}
But then $\xi_C(V) \in \Lambda(C) \subset \Lambda(A)$, so we may find $W \in \Upsilon(A)$ such that $\xi_A(W) = \xi_C(V)$.
We conclude from this that there are unitary operators $U_0$ on $\HHh$ and $U$ on $\KKk$ with $U_0(W) = V$
and $U(A(W)) = C(V)$. For $n \geqsl 1$ define $G_n \in \GGg(\HHh)$ by $G_n = (C\bigr|_V)^{-1} U A$ on $W$
and $G_n = n U_0$ on $\HHh \ominus W$. Observe that $U A G_n^{-1} = C$ on $V$ and $\|U A G_n^{-1}\bigr|_{\HHh \ominus V}\|
\leqsl \frac1n \|A\|$. These properties combined with \eqref{eqn:aux1} yield \eqref{eqn:aux0} with $G = G_N$
for some $N \geqsl 1$. This shows that $C \in \cloRB_G^U(A)$ provided $\Lambda(C) \subset \Lambda(A)$. But the inclusion
$\cloRB_G^U(A) \subset \cloRB_G^G(A)$ is immediate and the implication `$C \in \cloRB_G^G(A) \implies \Lambda(C) \subset
\Lambda(A)$' follows from (a) and (b). So, $\cloRB_G^U(A) = \cloRB_G^G(A)$. Finally, if $C \in \cloRB_G^G(A)$, then
$C^* \in \cloRB_G^G(A^*)$ and therefore $C^* \in \cloRB_G^U(A^*)$ which yields $C \in \cloRB_U^G(A)$.
\end{proof}

Now for an operator $A \in \BBb(\HHh,\KKk)$ we define the following indices:
\begin{enumerate}[({I}1)]
\item \textit{range} index of $A$: $\iota_r(A) = \dim \overline{\RrR}(A)$ (cardinal),
\item \textit{initial} index of $A$: $\iota_i(A) = \dim \NnN(A) + \IC(\RrR(A))$ (cardinal),
\item \textit{final} index of $A$: $\iota_f(A) = \dim (\KKk \ominus \overline{\RrR}(A)) + \IC(\RrR(A))$ (cardinal),
\item \textit{binary} index of $A$: $\iota_b(A) = \bB(\RrR(A))$ ($0$ or $1$).
\end{enumerate}
For example, note that:
\begin{enumerate}[\upshape(P1)]
\item if $A$ is a closed range operator, then $\iota_i(A)$ and $\iota_f(A)$ are the well known indices: nullity and defect
   (respectively), compare e.g. \cite{corach},
\item $\iota_i(A^*) = \iota_f(A) = \iota_i(|A^*|) = \iota_f(|A^*|)$, $\iota_f(A^*) = \iota_i(A) = \iota_i(|A|)
   = \iota_f(|A|)$, $\iota_r(A^*) = \iota_r(A) = \iota_r(|A|) = \iota_r(|A^*|)$ and $\iota_b(A^*) = \iota_b(A)
   = \iota_b(|A|) = \iota_b(|A^*|)$ (see \PRO{adjoint}),
\item $\iota_i(A) + \iota_r(A) = \dim \HHh$, $\iota_f(A) + \iota_r(A) = \dim \KKk$ (to see this, consider separately
   the cases when $\iota_r(A)$ is finite-dimensional or not),
\item if $\KKk$ and $\HHh$ are separable and $\RrR(A)$ is nonclosed, $\iota_i(A) = \iota_f(A) = \aleph_0$.
\end{enumerate}
Using only these four indices we shall characterize all operators belonging to $\cloRB_G^G(A)$ (see \THM{orbit}).
To do this, we need

\begin{lem}{lambda}
For $A \in \BBb(\HHh,\KKk)$, $\Lambda(A)$ consists precisely of all the triples of the form
$(\iota_i(A)+\nu,\mu,\iota_f(A)+\nu)$ where $\mu$ and $\nu$ are cardinal numbers satisfying the conditions:
\begin{enumerate}[\upshape(a)]
\item $\mu + \nu = \iota_r(A)$,
\item if $\iota_b(A) = 0$, then $\mu < \iota_r(A)$.
\end{enumerate}
\end{lem}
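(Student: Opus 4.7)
The plan is to prove both inclusions, first showing that each $\xi_A(V) \in \Lambda(A)$ has the claimed form, and then realising any admissible triple by a suitable $V \in \Upsilon(A)$.

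For the ``$\subset$'' direction, take $V \in \Upsilon(A)$ and set $\mu = \dim V$, $W = A(V)$, $\alpha = \dim(\overline{\RrR}(A) \ominus W)$. Since $A|_V$ is bounded below, $A|_V \dd V \to W$ is an isomorphism, so $W$ is a complete subspace of $\RrR(A)$ of dimension $\mu$, whence $\alpha \geqsl \IC(\RrR(A))$ (forcing $\IC(\RrR(A)) + \alpha = \alpha$ by \PRO{indices}(a)), $\mu + \alpha = \iota_r(A)$, and $\dim(\KKk \ominus W) = \dim(\KKk \ominus \overline{\RrR}(A)) + \alpha = \iota_f(A) + \alpha$. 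The delicate point is $\dim(\HHh \ominus V)$, because $V$ need not be orthogonal to $\NnN(A)$. Let $P$ denote the orthogonal projection of $\HHh$ onto $\overline{\RrR}(A^*)$ and set $V' := P(V)$. Using $APv = Av$ together with $\|Av\| \geqsl c\|v\|$ one checks that $V'$ is closed in $\HHh$, that $V' \in \Upsilon(A)$ with $A(V') = W$, and that $V$ is the graph of the bounded linear map $\sigma \dd V' \to \NnN(A)$ defined by $\sigma(Pv) = v - Pv$ inside $V' \oplus \NnN(A)$. The graph description yields $\dim((V' \oplus \NnN(A)) \ominus V) = \dim \NnN(A)$, which combined with the orthogonal splitting $\HHh = V' \oplus \NnN(A) \oplus (\overline{\RrR}(A^*) \ominus V')$ gives $\dim(\HHh \ominus V) = \dim \NnN(A) + \dim(\overline{\RrR}(A^*) \ominus V')$. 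Applying \LEM{dim} with $\overline{\RrR}(A^*)$ in place of its $W$ and $V'$ in place of its $V$ identifies the right-hand term with $\alpha$, so $\dim(\HHh \ominus V) = \dim \NnN(A) + \alpha = \iota_i(A) + \alpha$. Setting $\nu := \alpha$ gives $\xi_A(V) = (\iota_i(A) + \nu, \mu, \iota_f(A) + \nu)$ with (a); and (b) follows because $\mu = \iota_r(A)$ would render $W$ a complete subspace of $\RrR(A)$ of dimension $\dim \overline{\RrR}(A)$, forcing $\iota_b(A) = 1$.

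For ``$\supset$'', given admissible $(\mu, \nu)$, the plan is to build a complete $W \subset \RrR(A)$ with $\dim W = \mu$ and $\dim(\overline{\RrR}(A) \ominus W) = \IC(\RrR(A)) + \nu$, then take $V := \{x \in \overline{\RrR}(A^*) \dd Ax \in W\}$. If $\iota_b(A) = 1$, \PRO{indices}(c) furnishes a complete $V_0 \subset \RrR(A)$ with $\dim V_0 = \iota_r(A)$ and $\dim(\overline{\RrR}(A) \ominus V_0) = \IC(\RrR(A))$; splitting an orthonormal basis of $V_0$ (feasible since $\mu + \nu = \iota_r(A) = \dim V_0$) yields a closed $W \subset V_0$ with $\dim W = \mu$ and $\dim(V_0 \ominus W) = \nu$, and then $\dim(\overline{\RrR}(A) \ominus W) = \IC(\RrR(A)) + \nu$ automatically. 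If $\iota_b(A) = 0$, condition (b) together with \PRO{indices}(e) forces $\nu = \iota_r(A)$, whereupon \PRO{indices}(d) supplies a complete $W \subset \RrR(A)$ of dimension $\mu < \iota_r(A)$; then $\dim(\overline{\RrR}(A) \ominus W) = \iota_r(A) = \IC(\RrR(A)) + \nu$. In both cases $V$ is closed (as $A$ is continuous and $W$ is closed), $A|_V \dd V \to W$ is a continuous linear bijection, hence an isomorphism by the open mapping theorem, so $V \in \Upsilon(A)$ with $A(V) = W$ and $\dim V = \mu$. The first part then delivers $\xi_A(V) = (\iota_i(A) + \nu, \mu, \iota_f(A) + \nu)$.

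The principal obstacle is the computation of $\dim(\HHh \ominus V)$: since $V$ may sit slantwise across $\NnN(A)$ and $\overline{\RrR}(A^*)$, naive cardinal bookkeeping founders on ill-defined infinite-cardinal subtraction, and the graph-of-$\sigma$ representation inside $V' \oplus \NnN(A)$ is the cleanest workaround.
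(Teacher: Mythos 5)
Your proof is correct and follows essentially the same approach as the paper's: Lemma \ref{lem:dim} is used in both directions to relate the codimension of $A(V)$ in $\overline{\RrR}(A)$ to the relevant cardinal, and your ``$\supset$'' construction (a complete $W \subset \RrR(A)$ with $\dim W = \mu$ and $\dim(\overline{\RrR}(A)\ominus W) = \IC(\RrR(A))+\nu$, then $V = A^{-1}(W)\cap\overline{\RrR}(A^*)$) is the same as the paper's, apart from splitting the case analysis by $\iota_b(A)$ rather than by whether $\mu<\iota_r(A)$. The one noteworthy variation is in the ``$\subset$'' direction: the paper sets $W = \HHh\ominus(\NnN(A)+V)$, applies Lemma~\ref{lem:dim} to $V+W$, and then passes silently to $\dim(\HHh\ominus V)=\iota_i(A)+\dim W$, the tacit step being $\dim\bigl((\NnN(A)+V)\ominus V\bigr)=\dim\NnN(A)$; you make that step explicit by projecting $V$ onto $\overline{\RrR}(A^*)$ and exhibiting $V$ as the graph of a bounded map $\sigma\dd V'\to\NnN(A)$, which is a careful way to handle the non-orthogonal direct sum and fills a detail the paper leaves to the reader.
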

\begin{proof}
Suppose $V \in \Upsilon(A)$ and let $(\alpha,\beta,\alpha') = \xi_A(V)$ and $\mu = \dim V$. Let $W$ be the orthogonal
complement (in $\HHh$) of the (closed) subspace $\NnN(A) + V$ and put $\nu = \dim W$. Then $A$ restricted to $V + W$
is a (continuous) monomorphism onto $\RrR(A)$ and $A(V)$ is closed in $\RrR(A)$. By \LEM{dim},
$\nu = \dim (\overline{\RrR}(A) \ominus A(V))$, which gives $\mu + \nu = \iota_r(A)$ and $\nu \geqsl \IC(\RrR(A))$.
Now \PRO{indices}--(a) yields that $\nu = \IC(\RrR(A)) + \nu$ and thus $\alpha = \iota_i(A) + \nu$
and $\alpha' = \iota_f(A) + \nu$. The condition (c) follows from the definition of $\iota_b(A)$ and the relation
$\mu = \dim A(V)$.\par
Now assume that $\mu$ and $\nu$ satisfy (a)--(c). If $\mu < \iota_r(A)$, then $\nu$ is uniquely determined by (a),
$\nu = \nu + \IC(\RrR(A))$ (because $\nu = \iota_r(A)$ if $\iota_r(A)$ is infinite and otherwise $\IC(\RrR(A)) = 0$)
and there is a complete subspace $E$ of $\RrR(A)$ with
\begin{equation}\label{eqn:mu}
\dim E = \mu
\end{equation}
(cf. \PRO{indices}--(d)). Then automatically
\begin{equation}\label{eqn:nu}
\nu + \IC(\RrR(A)) = \dim (\overline{\RrR}(A) \ominus E).
\end{equation}
If $\mu = \iota_r(A)$, then $\iota_b(A) = 1$ and (by \PRO{indices}--(c)) there is a complete subspace $W$ of $\RrR(A)$
such that $\dim W = \mu$ and $\dim (\overline{\RrR}(A) \ominus W) = \IC(\RrR(A))$. Thanks to (a) we may find a closed
subspace $F$ of $W$ such that $\dim (W \ominus F) = \mu$ and $\nu = \dim F$. Now putting $E = W \ominus F
\subset \RrR(A)$, we see that relations \eqref{eqn:mu} and \eqref{eqn:nu} are fulfilled.\par
To end the proof, let $V = A^{-1}(E) \cap \overline{\RrR}(A^*)$. Note that $V \in \Upsilon(A)$ and $A(V) = E$.
By \eqref{eqn:nu} and \LEM{dim}, $\nu + \IC(\RrR(A)) = \dim (\overline{\RrR}(A^*) \ominus V)$
and thus $(\iota_i(A) + \nu,\mu,\iota_f(A) + \nu) = \xi_A(V) \in \Lambda(A)$.
\end{proof}

The next result is simply deduced from the previous one.

\begin{cor}{lambda}
Let $A \in \BBb(\HHh,\KKk)$.
\begin{enumerate}[\upshape(i)]
\item If $\iota_r(A)$ is finite, then
   $$
   \Lambda(A) = \{(\dim \HHh - j,j,\dim \KKk - j)\dd\ j = 0,\ldots,\iota_r(A)\}
   $$
   where $\mM - j = \mM$ when $\mM$ is infinite and $j$ is finite.
\item If $\iota_b(A) = 0$, then
   $$
   \Lambda(A) = \{(\dim \HHh,\beta,\dim \KKk)\dd\ 0 \leqsl \beta < \iota_r(A)\}.
   $$
\item If $\iota_r(A)$ is infinite and $\iota_b(A) = 1$, then
   \begin{multline*}
   \Lambda(A) = \{(\dim \HHh,\beta,\dim \KKk)\dd\ 0 \leqsl \beta < \iota_r(A)\}\\
   \cup \{(\iota_i(A) + \alpha,\iota_r(A),\iota_f(A)+ \alpha)\dd\ 0 \leqsl \alpha \leqsl \iota_r(A)\}.
   \end{multline*}
\end{enumerate}
\end{cor}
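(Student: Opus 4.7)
The plan is to specialize Lemma~\ref{lem:lambda}, which already realises $\Lambda(A)$ as the set of triples $(\iota_i(A)+\nu,\mu,\iota_f(A)+\nu)$ subject to (a) $\mu+\nu=\iota_r(A)$ and (b) (when $\iota_b(A)=0$) $\mu<\iota_r(A)$. All the work will lie in the cardinal arithmetic needed to see how this parametrisation collapses in each of the three listed regimes. Throughout, property (P3), namely $\iota_i(A)+\iota_r(A)=\dim\HHh$ and $\iota_f(A)+\iota_r(A)=\dim\KKk$, will be used to translate $\iota_i(A)+\nu$ and $\iota_f(A)+\nu$ into the forms displayed in (i)--(iii).

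For (i), the finiteness of $\iota_r(A)$ turns $\mu+\nu=\iota_r(A)$ into an ordinary decomposition $\mu=j$, $\nu=\iota_r(A)-j$ with $j\in\{0,\ldots,\iota_r(A)\}$. I would first observe that Proposition~\ref{pro:indices}(e) forces $\iota_b(A)=1$ whenever $\iota_r(A)$ is finite (because $\bB=0$ requires $\dim\bar{\RrR}(A)$ to be an infinite limit cardinal of countable cofinality), so condition (b) is vacuous. Then (P3) gives $\iota_i(A)+\nu=\iota_i(A)+\iota_r(A)-j=\dim\HHh-j$ under the stated convention, and likewise on the $\KKk$ side, yielding the family in (i).

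For (ii), I would use Proposition~\ref{pro:indices}(e) the other way around: $\iota_b(A)=0$ forces $\iota_r(A)$ to be infinite. Condition (b) then reads $\mu<\iota_r(A)$, and by the absorption property of infinite cardinal sums, the relation $\mu+\nu=\iota_r(A)$ immediately forces $\nu=\iota_r(A)$. Hence $\iota_i(A)+\nu=\dim\HHh$ and $\iota_f(A)+\nu=\dim\KKk$, with $\mu$ as the only free parameter, giving exactly the asserted family.

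For (iii), condition (b) is vacuous and I would split on whether $\mu<\iota_r(A)$ or $\mu=\iota_r(A)$. The first possibility reproduces the argument of (ii) (again forcing $\nu=\iota_r(A)$) and produces the triples $(\dim\HHh,\mu,\dim\KKk)$ with $0\leqsl\mu<\iota_r(A)$. The second allows $\nu$ to range freely over $\{0,\ldots,\iota_r(A)\}$ since the infinite sum $\iota_r(A)+\nu$ absorbs $\nu$; writing $\alpha:=\nu$ this gives the second family. The only real subtlety is the bookkeeping of cardinal arithmetic for infinite cardinals together with the subtraction convention in (i); this is the mildest conceivable obstacle and I expect no genuine difficulty.
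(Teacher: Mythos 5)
Your proposal is correct and matches what the paper intends: the corollary is "simply deduced" from Lemma~\ref{lem:lambda}, and the deduction is exactly the cardinal arithmetic you carry out (absorption forcing $\nu=\iota_r(A)$ when $\mu<\iota_r(A)$ and $\iota_r(A)$ is infinite, the extra freedom $\nu\leqsl\iota_r(A)$ when $\mu=\iota_r(A)$ and $\iota_b(A)=1$, and the finite case by direct subtraction), together with the observations via Proposition~\ref{pro:indices} that $\iota_r(A)$ finite forces $\iota_b(A)=1$ and $\iota_b(A)=0$ forces $\iota_r(A)$ infinite. Nothing is missing.
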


Now the points (a) and (c) of \LEM{main} combined with \COR{lambda} yield

\begin{thm}{open}
Every operator $A \in \BBb(\HHh,\KKk)$ with $\iota_b(A) = 1$ has a neighbourhood $\XXx$ such that $A \in \cloRB_G^G(T)$
for all $T \in \XXx$.
\end{thm}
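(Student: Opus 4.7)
The plan is to exploit \LEM{main}(c), which identifies $\cloRB_G^G(T)$ with $\{C\in\BBb(\HHh,\KKk)\dd\ \Lambda(C)\subset\Lambda(T)\}$; hence it suffices to produce an open neighborhood $\XXx$ of $A$ such that $\Lambda(A)\subset\Lambda(T)$ for every $T\in\XXx$. The entire argument will hinge on a single ``master'' subspace $W_0\in\Upsilon(A)$ that simultaneously realises the maximal triple of $\Lambda(A)$ and is robust under small perturbations of $A$.

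To construct $W_0$, I invoke $\iota_b(A)=\bB(\RrR(A))=1$ together with \PRO{indices}(c), which supplies a complete subspace $E\subset\RrR(A)$ with $\dim E=\iota_r(A)$ and $\dim(\overline{\RrR}(A)\ominus E)=\IC(\RrR(A))$. Following the last paragraph of the proof of \LEM{lambda}, I set $W_0:=A^{-1}(E)\cap\overline{\RrR}(A^*)$; then $W_0\in\Upsilon(A)$, $A(W_0)=E$, and $\xi_A(W_0)=(\iota_i(A),\iota_r(A),\iota_f(A))$. Applying \LEM{main}(a) to this single $V=W_0$, reformulated in neighborhood language by a routine sequential contradiction argument, I extract an open $\XXx\ni A$ such that every $T\in\XXx$ satisfies $W_0\in\Upsilon(T)$, $T(W_0)$ is closed in $\KKk$, and $\xi_T(W_0)=\xi_A(W_0)$.

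I then propagate this stability to arbitrary closed subspaces $V\subset W_0$: for such a $V$ and any $T\in\XXx$, the restriction $T|_V$ inherits the lower bound of $T|_{W_0}$, so $V\in\Upsilon(T)$. The first two coordinates of $\xi_T(V)$ are tautologically independent of $T$; for the third, I decompose orthogonally
$$
\KKk\ominus T(V)=\bigl(T(W_0)\ominus T(V)\bigr)\oplus\bigl(\KKk\ominus T(W_0)\bigr)
$$
and apply \LEM{dim} with $W=W_0$ to identify $\dim(T(W_0)\ominus T(V))=\dim(W_0\ominus V)=\dim(A(W_0)\ominus A(V))$. Combined with $\dim(\KKk\ominus T(W_0))=\dim(\KKk\ominus A(W_0))$, this forces $\xi_T(V)=\xi_A(V)$.

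It remains to verify via \COR{lambda} that every triple in $\Lambda(A)$ arises as $\xi_A(V)$ for some closed $V\subset W_0$. If $\iota_r(A)$ is finite, any $j$-dimensional subspace of $W_0$ realises the triple $(\dim\HHh-j,j,\dim\KKk-j)$. If $\iota_r(A)$ is infinite (so $\dim W_0=\iota_r(A)$), then for $\beta<\iota_r(A)$ any $\beta$-dimensional $V\subset W_0$ realises $(\dim\HHh,\beta,\dim\KKk)$, because $W_0\ominus V$ has dimension $\iota_r(A)$ and absorbs both $\iota_i(A)$ and $\iota_f(A)$; and for $\alpha\leqsl\iota_r(A)$, any splitting $W_0=V\oplus F$ with $\dim V=\iota_r(A)$ and $\dim F=\alpha$ (which exists thanks to the cardinal identity $\iota_r(A)+\alpha=\iota_r(A)$) realises $(\iota_i(A)+\alpha,\iota_r(A),\iota_f(A)+\alpha)$. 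Consequently $\Lambda(A)\subset\Lambda(T)$ for every $T\in\XXx$, and \LEM{main}(c) closes the argument. The delicate step is the promotion in the second paragraph of \LEM{main}(a) from its sequential form to a uniform neighborhood statement for the specific $W_0$; once this is settled, the cardinal bookkeeping in the final paragraph is routine.
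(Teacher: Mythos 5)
Your proof is correct, and it takes a genuinely more constructive route than the paper's. The paper's argument is a short sequential contradiction: it notes that $(\iota_i(A),\iota_r(A),\iota_f(A))\in\Lambda(A)$ (using $\iota_b(A)=1$), applies \LEM{main}(a) to carry this single triple into $\Lambda(A_n)$ for large $n$, and then appeals to the explicit formulas of \COR{lambda} to conclude the \emph{entire} inclusion $\Lambda(A)\subset\Lambda(A_n)$ from that one membership --- an abstract monotonicity argument about the order structure of $\Lambda$, which it leaves to the reader to unpack. You instead realize the maximal triple geometrically via the subspace $W_0=A^{-1}(E)\cap\overline{\RrR}(A^*)$, show that \LEM{main}(a) gives uniform stability of $\xi_T(W_0)$ on a neighbourhood, propagate that stability to every closed subspace of $W_0$ by the orthogonal decomposition of $\KKk\ominus T(V)$ together with \LEM{dim}, and then check case-by-case that every triple listed in \COR{lambda} is $\xi_A(V)$ for some closed $V\subset W_0$. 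What the paper gains is brevity; what you gain is a transparent geometric explanation of \emph{why} the maximal triple controls all the others, effectively re-deriving the structural monotonicity of \COR{lambda} concretely rather than invoking it. Both are sound; the step you flag as delicate (promoting \LEM{main}(a) from a sequential to a neighbourhood statement for the fixed $W_0$) is indeed a routine contradiction argument and poses no gap.
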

\begin{proof}
Notice that $(\iota_i(A),\iota_r(A),\iota_f(A)) \in \Lambda(A)$ and argue by contradiction: suppose there is a sequence
of bounded operators $A_1,A_2,\ldots$ which converge to $A$ and are such that $A \notin \cloRB_G^G(A_n)$ for each $n$.
But then \LEM{main}--(a) gives $(\iota_i(A),\iota_r(A),\iota_f(A)) \in \Lambda(A_n)$ for large $n$ and therefore
$\Lambda(A) \subset \Lambda(A_n)$ for this $n$ (by the formula for $\Lambda(X)$ given in \COR{lambda}). This denies
the point (c) of \LEM{main}.
\end{proof}

The main result of the paper is the following consequence of \COR{lambda} and \LEM{main}--(c). Its proof is omitted.

\begin{thm}{orbit}
For $A \in \BBb(\HHh,\KKk)$, $\cloRB_G^G(A)$ consists precisely of those operators $C \in \BBb(\HHh,\KKk)$ which satisfy
the following three conditions:
\begin{enumerate}[\upshape(a)]
\item $\iota_r(C) \leqsl \iota_r(A)$,
\item if $\iota_b(A) = 0$ and $\iota_b(C) = 1$, then $\iota_r(C) < \iota_r(A)$,
\item there is a cardinal $\alpha$ for which $\iota_i(C) = \iota_i(A) + \alpha$ and $\iota_f(C) = \iota_f(A) + \alpha$.
\end{enumerate}
What is more, if
\begin{equation}
\iota_i(C) = \dim \HHh \quad \textup{and} \quad \iota_f(C) = \dim \KKk,\tag{$\star$}
\end{equation}
then \textup{(c)} is fulfilled for any $A \in \BBb(\HHh,\KKk)$.
\end{thm}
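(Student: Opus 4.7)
The plan is to invoke \LEM{main}(c), which identifies $\cloRB_G^G(A)$ with $\{C \in \BBb(\HHh,\KKk) : \Lambda(C) \subset \Lambda(A)\}$, and then translate this set-theoretic inclusion into the numerical conditions (a)--(c) using the explicit formulas for $\Lambda(\cdot)$ from \COR{lambda}. Throughout I freely apply (P3) to exchange $\dim\HHh$ with $\iota_i + \iota_r$ and $\dim\KKk$ with $\iota_f + \iota_r$, and use \PRO{indices}(e) together with (P3) to observe that $\iota_b(X) = 0$ forces $\iota_r(X)$ to be an infinite limit cardinal with $\iota_i(X) = \dim\HHh$ and $\iota_f(X) = \dim\KKk$; in particular the hypothesis $\iota_b(C) = 0$ already entails $(\star)$.

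For the necessity direction, suppose $\Lambda(C) \subset \Lambda(A)$. The middle-coordinate projection of $\Lambda(C)$ attains every cardinal strictly below $\iota_r(C)$, and attains $\iota_r(C)$ itself precisely when $\iota_r(C)$ is finite or $\iota_b(C) = 1$. Comparison with the middle coordinates of $\Lambda(A)$, which are bounded above by $\iota_r(A)$ (strictly so when $\iota_b(A) = 0$), yields (a) immediately, and in the delicate sub-case $\iota_b(A) = 0$, $\iota_b(C) = 1$ forces the sharper inequality (b). For (c): if $\iota_b(C) = 0$, it is automatic with $\alpha = \iota_r(A)$ thanks to the observation above together with (P3) applied to $A$; otherwise the ``top'' triple $(\iota_i(C), \iota_r(C), \iota_f(C))$ belongs to $\Lambda(C) \subset \Lambda(A)$, and reading off its shape in the appropriate case of \COR{lambda} identifies the required $\alpha$ directly.

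For the sufficiency direction, assuming (a)--(c), I show every triple of $\Lambda(C)$ lies in $\Lambda(A)$ by case analysis on the form of $\Lambda(C)$ in \COR{lambda}. The case $\iota_b(C) = 0$ is immediate from (a), as both $\Lambda(C)$ and the ``small'' family of $\Lambda(A)$ have shape $\{(\dim\HHh, \beta, \dim\KKk) : \beta < \iota_r(\cdot)\}$. The case $\iota_r(C)$ finite reduces, via (P3) and (b), to direct embedding into either \COR{lambda}(i) (both range indices finite) or into the small family of \COR{lambda}(ii) or (iii) (after collapsing $\dim\HHh - j = \dim\HHh$ for $\dim\HHh$ infinite). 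Finally, when $\iota_b(C) = 1$ with $\iota_r(C)$ infinite, the small family embeds as before, and the large family $\{(\iota_i(C) + \alpha', \iota_r(C), \iota_f(C) + \alpha') : \alpha' \leqsl \iota_r(C)\}$ is transported into $\Lambda(A)$ by substituting the $\alpha$ from (c) and absorbing $\alpha'$ using idempotence of infinite cardinal addition. The ``moreover'' clause is immediate: under $(\star)$ set $\alpha := \iota_r(A)$, so that (P3) yields $\iota_i(A) + \iota_r(A) = \dim\HHh = \iota_i(C)$ and $\iota_f(A) + \iota_r(A) = \dim\KKk = \iota_f(C)$. The main obstacle is the bookkeeping in this last case, where one must correctly distinguish whether the image of a large-family triple lands in the small or the large family of $\Lambda(A)$ (according to whether $\iota_r(C) < \iota_r(A)$ or $\iota_r(C) = \iota_r(A)$) and verify the cardinal identities in each sub-case, leaning on \PRO{indices}(a) and the absorption $\nu + \IC(\RrR(A)) = \nu$ for sufficiently large $\nu$.
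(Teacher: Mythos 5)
Your proof is correct and follows exactly the route the paper indicates: the paper simply declares the theorem to be a consequence of \COR{lambda} and \LEM{main}\POINT{c} and omits the verification, so your case analysis is precisely the filled-in argument. (One small imprecision: the fact that $\iota_b(X)=0$ forces $\iota_i(X)=\dim\HHh$ and $\iota_f(X)=\dim\KKk$ comes from \PRO{indices}\POINT{b} (giving $\IC(\RrR(X))=\iota_r(X)$) rather than \POINT{e}; this does not affect the correctness of the argument.)
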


\begin{rem}{Rank}
The description of $\cloRB_G^G(A)$ may be given in terms of only three indices: both the indices $\iota_r$ and $\iota_b$
may be `included' in one index $\iota_R$ defined by the rule: $\iota_R(X)$ is equal to $\iota_r(X)$ iff $\iota_b(X) = 0$,
otherwise $\iota_R(X)$ is the direct successor of $\iota_r(X)$ (in other words, $\iota_R(X)$ is the least cardinal $\alpha$
such that $\RrR(X)$ contains no complete subspace of dimension $\alpha$). Using \PRO{indices}--(e), one may show that
for $A, B \in \BBb(\HHh,\KKk)$ the points (a) and (b) of \THM{orbit} are fulfilled iff $\iota_R(B) \leqsl \iota_R(A)$.
So, $\cloRB_G^G(A)$ may be described by means of $\iota_R$, $\iota_i$ and $\iota_f$. However, it seems to us that the index
$\iota_R$ is rather unnatural, because $\iota_R(X)$ may be uncountable even if $X$ acts on a separable Hilbert space.\par
Moreover, using (P3), it may be shown that when $\dim \HHh \neq \dim \KKk$, the condition (c) of \THM{orbit} may be
simplified to `$\iota_m(C) \geqsl \iota_m(A)$' where $\iota_m(X) = \min(\iota_i(X),\iota_f(X))$
for $X \in \BBb(\HHh,\KKk)$. So, one needs only two indices ($\iota_R$ and $\iota_m$) for the description of $\cloRB_G^G$
provided $\HHh$ and $\KKk$ have different dimensions. The case when $\dim \HHh = \dim \KKk$ may simply be reduced
to the one when $\KKk = \HHh$. This case will be investigated in Section 5.
\end{rem}

\begin{cor}{the_same}
For $A, B \in \BBb(\HHh,\KKk)$, $\cloRB_G^G(A) = \cloRB_G^G(B)$ iff $\iota_r(A) = \iota_r(B)$, $\iota_i(A) = \iota_i(B)$,
$\iota_f(A) = \iota_f(B)$ and $\iota_b(A) = \iota_b(B)$.
\end{cor}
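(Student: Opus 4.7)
The plan is to derive both implications of the iff from \THM{orbit}. For the implication that equal indices force equal closures, note that the three conditions \textup{(a)}--\textup{(c)} appearing in \THM{orbit} describe $\cloRB_G^G(X)$ purely in terms of the four indices $\iota_r(X)$, $\iota_i(X)$, $\iota_f(X)$, $\iota_b(X)$. Hence if these indices coincide for $X = A$ and $X = B$, then the two closures consist of exactly the same operators $C \in \BBb(\HHh,\KKk)$.

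For the converse, the hypothesis $\cloRB_G^G(A) = \cloRB_G^G(B)$ gives both $A \in \cloRB_G^G(B)$ and $B \in \cloRB_G^G(A)$, and I would apply \THM{orbit} to each of these memberships in turn. Condition \textup{(a)} yields $\iota_r(A) \leqsl \iota_r(B)$ together with the reverse inequality, so $\iota_r(A) = \iota_r(B)$. Next, I would rule out $\iota_b(A) \neq \iota_b(B)$ by contradiction: if, say, $\iota_b(A) = 0$ and $\iota_b(B) = 1$, then condition \textup{(b)} applied to $B \in \cloRB_G^G(A)$ would force $\iota_r(B) < \iota_r(A)$, contradicting the equality already established; the opposite case is symmetric. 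Finally, condition \textup{(c)} applied in both directions produces cardinals $\alpha$ and $\beta$ with $\iota_i(A) = \iota_i(B) + \alpha$, $\iota_i(B) = \iota_i(A) + \beta$, and the analogous equations for $\iota_f$. Since $\kappa \leqsl \kappa + \lambda$ for arbitrary cardinals, this sandwiches $\iota_i(A)$ and $\iota_i(B)$ between one another, giving $\iota_i(A) = \iota_i(B)$; the same reasoning settles $\iota_f$.

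There is no real obstacle once \THM{orbit} is available. The one mildly delicate point is the order of steps in the converse direction: the binary-index equality must be derived only after the range-index equality, so that the contradiction obtained from condition \textup{(b)} is genuinely available. After that, the whole argument reduces to the monotonicity of cardinal addition.
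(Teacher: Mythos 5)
Your proof is correct and is exactly the intended argument: the paper gives no proof of this corollary because it is an immediate consequence of \THM{orbit}, and your two-directional deduction (observing that conditions (a)--(c) depend only on the four indices, and using $A \in \cloRB_G^G(B)$ together with $B \in \cloRB_G^G(A)$, the inequality $\kappa \leqsl \kappa + \lambda$, and the strict inequality in (b) once $\iota_r(A) = \iota_r(B)$ is known) is precisely how the author expects the reader to fill it in.
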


\begin{cor}{separable}
Let $A \in \BBb(\HHh)$.
\begin{enumerate}[\upshape(I)]
\item If $A$ is a finite rank operator, $\cloRB_G^G(A) = \{B \in \BBb(\HHh)\dd\ \iota_r(B) \leqsl \iota_r(A)\}$.
\item If $A$ is compact and of infinite rank, $\cloRB_G^G(A)$ coincides with the class of all compact operators
   on $\HHh$.
\item Suppose $\HHh$ is infinite-dimensional and separable.
   \begin{enumerate}[\upshape(i)]
   \item If $A$ is noncompact and nonsemi-Fredholm, $\cloRB_G^G(A)$ coincides with the class of all nonsemi-Fredholm
      operators on $\HHh$,
   \item If $A$ is semi-Fredholm, $\cloRB_G^G(A)$ consists of all nonsemi-Fred\-holm operators on $\HHh$ and of precisely
      those semi-Fred\-holm operators $B \in \BBb(\HHh)$ for which $\ind(B) = \ind(A)$ and $$\min(\iota_i(B),\iota_f(B))
      \geqsl \min(\iota_i(A),\iota_f(A)).$$
   \end{enumerate}
\end{enumerate}
\end{cor}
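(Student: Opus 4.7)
The plan is to apply \THM{orbit} in each case by first computing the four indices $\iota_r(A)$, $\iota_i(A)$, $\iota_f(A)$ and $\iota_b(A)$, and then decoding the three resulting conditions on an arbitrary $B \in \BBb(\HHh)$. In case (I) the range of $A$ is closed, so $\iota_b(A) = 1$ (making condition (b) of \THM{orbit} vacuous), while (P3) yields $\iota_i(A) = \iota_f(A) = \dim \HHh - \iota_r(A)$; a short cardinal-arithmetic check then shows that any $B$ with $\iota_r(B) \leqsl \iota_r(A)$ satisfies $\iota_i(B) = \iota_i(A)$ and $\iota_f(B) = \iota_f(A)$, so (c) holds with $\alpha = 0$. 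In case (II) compactness of $A$ together with infinite rank gives $\iota_r(A) = \aleph_0$ and (via (P3) in an infinite-dimensional $\HHh$) $\iota_i(A) = \iota_f(A) = \dim \HHh$, while the crucial ingredient is the equivalence: \emph{$B$ is compact iff $\RrR(B)$ contains no closed infinite-dimensional subspace}, which one sees by applying the open mapping theorem to $B \bigr|_{B^{-1}(V) \cap \overline{\RrR}(B^*)}$ to produce a bounded right inverse of $B$ on such a hypothetical $V$. This equivalence translates the pair (a)--(b) under $\iota_b(A) = 0$ exactly into ``$B$ is compact'', and condition (c) is then automatic with $\alpha = 0$.

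For (III), separability of $\HHh$ makes (a) automatic, and one verifies $\iota_b(A) = 1$ in both subcases: if $\RrR(A)$ is closed this is immediate, and if $A$ is noncompact with nonclosed range the equivalence above supplies a closed infinite-dimensional subspace of $\RrR(A)$ which is a complete subspace isomorphic to the separable $\overline{\RrR}(A)$. Hence (b) is vacuous throughout (III) and the whole content lies in (c). In (i), noncompactness and nonsemi-Fredholmness together force $\iota_i(A) = \iota_f(A) = \aleph_0$ (either $\IC(\RrR(A)) = \aleph_0$, or $\RrR(A)$ is closed and $\dim \NnN(A) = \dim \NnN(A^*) = \aleph_0$), and then (c) is equivalent to $\iota_i(B) = \iota_f(B) = \aleph_0$, which one checks is precisely the nonsemi-Fredholm property of $B$. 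In (ii), $\iota_i(A) = \dim \NnN(A)$ and $\iota_f(A) = \dim \NnN(A^*)$ with at least one finite; I would then split on whether $B$ is semi-Fredholm: if not, (c) holds with $\alpha = \aleph_0$, while if yes then one of $\iota_i(B),\iota_f(B)$ is finite, which forces $\alpha$ to be finite and rewrites (c) as the conjunction $\ind(B) = \ind(A)$ together with $\min(\iota_i(B),\iota_f(B)) - \min(\iota_i(A),\iota_f(A)) = \alpha \geqsl 0$.

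The principal obstacle is the bookkeeping in (III)(ii) when $\ind(A) \in \{-\infty,+\infty\}$: one must verify that the common cardinal $\alpha$ in (c) is forced to be finite as soon as $B$ is semi-Fredholm, because otherwise both $\iota_i(B)$ and $\iota_f(B)$ would absorb $\alpha$ and become infinite, contradicting semi-Fredholmness. With this point settled, reformulating (c) in terms of $\ind$ and $\min(\iota_i,\iota_f)$ becomes a straightforward verification.
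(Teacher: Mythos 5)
Your proposal is correct and takes the approach the Corollary was clearly designed for: compute the four indices of $A$ in each case, feed them into the three conditions of \THM{orbit}, and decode what they say about an arbitrary $B$. (The paper offers no separate proof of \COR{separable}; it is meant to follow directly from \THM{orbit} as you describe.) Your key observation in (II) — that a bounded operator is compact exactly when its range contains no infinite-dimensional closed subspace, proved via the open mapping theorem on $B\bigr|_{B^{-1}(V)\cap\overline{\RrR}(B^*)}$ and the spectral projections of $|B|$ for the converse — is precisely what one needs to translate conditions (a)--(b) of \THM{orbit} (with $\iota_b(A)=0$, $\iota_r(A)=\aleph_0$) into ``$B$ is compact.'' Your handling of (III), including the bookkeeping around $\ind(A)\in\{-\aleph_0,+\aleph_0\}$ and the observation that semi-Fredholmness of $B$ forces the cardinal $\alpha$ to be finite, is correct.

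One small slip worth fixing: in case (I) you claim that any $B$ with $\iota_r(B)\leqslant\iota_r(A)$ satisfies $\iota_i(B)=\iota_i(A)$ and $\iota_f(B)=\iota_f(A)$, so that condition (c) of \THM{orbit} holds with $\alpha=0$. This is true only when $\dim\HHh$ is infinite. If $\HHh$ is finite-dimensional (which the statement of (I) does not exclude) and $\iota_r(B)<\iota_r(A)$, then $\iota_i(B)=\dim\HHh-\iota_r(B)>\iota_i(A)$ and one must instead take $\alpha=\iota_r(A)-\iota_r(B)$; condition (c) still holds, so the conclusion of (I) is unaffected, but the intermediate claim as you stated it is false.
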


From Corollary \ref{cor:separable} and \THM{open} one may conclude the classical theorem that in a separable
Hilbert space all semi-Fredholm operators of the same (arbitrarily fixed) index form a connected open set (which in fact
is the interior of the closure of the orbit $\OOo_G^G$ of a one semi-Fredholm operator which is a monomorphism
or an epimorphism). It may also be easily infered that all of these open sets have the same boundary, which was first
shown by Mbekhta \cite{mbekhta}. For details and generalization see Section~5.\par
Since $\OOo_G^G(I_{\HHh}) = \GGg(\HHh)$, we obtain the following generalization of the result of Izumino and Kato
\cite{i-k}.

\begin{cor}{invert}
The norm closure of the group of all invertible operators on a Hilbert space $\HHh$ is the set of all $A \in \BBb(\HHh)$
such that $\iota_i(A) = \iota_f(A)$.
\end{cor}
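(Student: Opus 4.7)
The plan is to apply \THM{orbit} directly to the case $A = I_{\HHh}$. First I would observe that the set $\GGg(\HHh)$ of all invertible operators on $\HHh$ coincides with the orbit $\oRB_G^G(I_{\HHh})$: for any $G_1, G_2 \in \GGg(\HHh)$ we have $G_2 I_{\HHh} G_1^{-1} = G_2 G_1^{-1}$, and every $G \in \GGg(\HHh)$ is of this form (take $G_1 = I_{\HHh}$, $G_2 = G$). Hence the norm closure of $\GGg(\HHh)$ equals $\cloRB_G^G(I_{\HHh})$.

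Next I would compute the four indices of $I_{\HHh}$: since $\NnN(I_{\HHh}) = \{0\}$ and $\RrR(I_{\HHh}) = \HHh$ is a (complete) Hilbert space isomorphic to its own completion, property (P1) from the list preceding \LEM{lambda} gives
$$
\iota_i(I_{\HHh}) = 0, \quad \iota_f(I_{\HHh}) = 0, \quad \iota_r(I_{\HHh}) = \dim \HHh, \quad \iota_b(I_{\HHh}) = 1.
$$

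Now I would invoke \THM{orbit} with $A = I_{\HHh}$: condition (a) becomes $\iota_r(C) \leqsl \dim \HHh$, which holds automatically for every $C \in \BBb(\HHh)$; condition (b) is vacuous since $\iota_b(I_{\HHh}) = 1$; and condition (c) requires the existence of a cardinal $\alpha$ with $\iota_i(C) = \alpha$ and $\iota_f(C) = \alpha$, which is equivalent to the single equality $\iota_i(C) = \iota_f(C)$ (with $\alpha$ being this common value). Combining these three observations yields precisely that $C$ belongs to $\cloRB_G^G(I_{\HHh})$ iff $\iota_i(C) = \iota_f(C)$, which is the claim.

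The argument is essentially a one-line deduction from \THM{orbit}, so I do not anticipate any real obstacle; the only point to be careful about is the verification that $\iota_b(I_{\HHh}) = 1$ (so that (b) is vacuous) and the trivial cardinal arithmetic in (c).
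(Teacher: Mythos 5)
Your proof is correct and follows exactly the paper's own route: observe that $\GGg(\HHh)=\oRB_G^G(I_{\HHh})$, compute the four indices of $I_{\HHh}$ (namely $\iota_i=\iota_f=0$, $\iota_r=\dim\HHh$, $\iota_b=1$), and read off the characterization from \THM{orbit}. The only cosmetic point is that (P1) by itself only gives $\iota_i$ and $\iota_f$; the values of $\iota_r$ and $\iota_b$ come directly from the definitions (I1)--(I4), but the conclusion is the same.
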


Our last purpose of this section is to describe $\cloRB_U^U(A)$. We shall do this with use of the closures
$\overline{\OOo}\UUu(A^*A)$ and $\overline{\OOo}\UUu(AA^*)$ of the orbits $\OOo\UUu(A^*A)$ and $\OOo\UUu(AA^*)$
where $\OOo\UUu(X) = \{U X U^{-1}\dd\ U \in \UUu(\HHh)\}$ for $X \in \BBb(\HHh)$. The characterization
of the members of $\overline{\OOo}\UUu(X)$ will be made in a subsequent paper.

\begin{pro}{UU}
Let $A \in \BBb(\HHh,\KKk)$.
\begin{enumerate}[\upshape(a)]
\item If $\iota_i(A) \leqsl \iota_f(A)$, then $$\cloRB_U^U(A) = \{B \in \BBb(\HHh,\KKk)\dd\
   B^*B \in \overline{\OOo}\UUu(A^*A) \textup{ and } \iota_f(B) = \iota_f(A)\}.$$
\item If $\iota_i(A) \geqsl \iota_f(A)$, then $$\cloRB_U^U(A) = \{B \in \BBb(\HHh,\KKk)\dd\
   BB^* \in \overline{\OOo}\UUu(AA^*) \textup{ and } \iota_i(B) = \iota_i(A)\}.$$
\end{enumerate}
\end{pro}
\begin{proof}
(a): If $B = \lim_{n\to\infty} V_n A U_n^{-1}$ with unitary $U_n$'s and $V_n$'s, then $B^*B = \lim_{n\to\infty}
U_n A^*A U_n^{-1}$ and $A \in \cloRB_U^U(B)$ which gives $\cloRB_G^G(B) = \cloRB_G^G(A)$. Thus we infer from \COR{the_same}
that
\begin{equation}\label{eqn:aux8}
\iota_f(B) = \iota_f(A).
\end{equation}
Conversely, suppose \eqref{eqn:aux8} holds true and $B^*B = \lim_{n\to\infty} U_n A^*A U_n^{-1}$ for some
$U_n \in \UUu(\HHh)$. Then
\begin{equation}\label{eqn:aux9}
U_n |A| U_n^{-1} \to |B|\ (n \to \infty)
\end{equation}
as well. This implies that $\cloRB_G^G(|B|) = \cloRB_G^G(|A|)$ and hence $\iota_f(|B|) = \iota_f(|A|)$
(cf. \COR{the_same}). The latter connection combined with (P2) gives
\begin{equation}\label{eqn:aux10}
\iota_i(B) = \iota_i(A).
\end{equation}
Fix $\epsi > 0$ and take, using \PRO{inside}, $W \in \Upsilon(B)$ such that $|B|(W) = W$ and
\begin{equation}\label{eqn:aux11}
\|B - B P_W\| < \frac{\epsi}{4}.
\end{equation}
Since $W \in \Upsilon(|B|)$, from \eqref{eqn:aux9} and \LEM{main}--(a) it follows that $W \in \Upsilon(|A| U_n^{-1})
= \Upsilon(A U_n^{-1})$ for all but finitely many $n$'s. Passing to a subsequence, we may assume that this is true for
all $n$'s. Put $W_n = U_n |A| U_n^{-1}(W)$. Again by \LEM{main}--(a), there is a sequence of unitary operators
$Z_1,Z_2,\ldots$ on $\HHh$ such that $Z_n(W) = W_n$ and $\lim_{n\to\infty} Z_n P_W = P_W$. We infer from this that
$\lim_{n\to\infty} Z_n |B| P_W = |B| P_W$ and therefore
\begin{equation}\label{eqn:aux12}
Z_n^{-1} U_n |A| U_n^{-1} P_W \to |B| P_W\ (n \to \infty)
\end{equation}
(because $\|Z_n^{-1} U_n |A| U_n^{-1} P_W - |B| P_W\| = \|U_n |A| U_n^{-1} P_W - Z_n |B| P_W\|$).\par
Further, let $B = Q |B|$ be the polar decomposition of $B$. Since $\iota_f(A) \geqsl \iota_i(A)$, there is a cardinal
$\alpha$ for which $\iota_f(A) = \iota_i(A) + \alpha$. We claim that there is an isometry $V \in \BBb(\HHh,\KKk)$
such that $V\bigr|_W = Q\bigr|_W$ and $\dim(\KKk \ominus \RrR(V)) = \alpha$. Indeed, since $W$ is a complete subspace
of $\RrR(|B|)$, $\dim(\overline{\RrR}(|B|) \ominus W) \geqsl \IC(\RrR(|B|)) = \IC(\RrR(B))$ and hence there are orthogonal
closed subspaces $E$ and $F$ of $\HHh$ such that $\overline{\RrR}(|B|) \ominus W = E \oplus F$ and $\dim E = \IC(\RrR(B))$.
Then $\KKk \ominus Q(W) = [(\KKk \ominus \overline{\RrR}(B)) \oplus Q(E)] \oplus Q(F)$
and $\dim [(\KKk \ominus \overline{\RrR}(B)) \oplus Q(E)] = \dim (\KKk \ominus \overline{\RrR}(B)) + \IC(\RrR(B))
= \iota_f(B) = \iota_i(B) + \alpha$, thanks to \eqref{eqn:aux8} and \eqref{eqn:aux10}. Similarly, $\HHh \ominus W
= (\NnN(B) \oplus E) \oplus F$ and $\dim (\NnN(B) \oplus E) = \iota_i(B)$. This means that we may find suitable $V$
in such a way that it extends $Q\bigr|_{W \oplus F}$.\par
Now observe that $\iota_f(V Z_n^{-1} U_n |A| U_n^{-1}) = \alpha + \iota_f(|A|) = \iota_f(A) = \iota_f(A U_n^{-1})$
and $(V Z_n^{-1} U_n |A| U_n^{-1})^* (V Z_n^{-1} U_n |A| U_n^{-1}) = (A U_n^{-1})^* (A U_n^{-1})$. From \THM{U}
(see Section 6) it follows that $V Z_n^{-1} U_n |A| U_n^{-1} \in \cloRB^U(A U_n^{-1})$ and thus there is
$V_n \in \UUu(\KKk)$ for which
\begin{equation}\label{eqn:aux13}
\|V_n A U_n^{-1} - V Z_n^{-1} U_n |A| U_n^{-1}\| < \frac{\epsi}{4}.
\end{equation}
To this end, note that $\|V_n A U_n^{-1} (I_{\HHh} - P_W)\| = \|U_n |A| U_n^{-1} (I_{\HHh} - P_W)\| \to \|\,|B|(I_{\HHh}
- P_W)\| < \frac {\epsi}{4}$ (by \eqref{eqn:aux9} and \eqref{eqn:aux11}) and therefore for large $n$'s one has
$\|(V_n A U_n^{-1} - B)(I_{\HHh} - P_W)\| \leqsl \frac{\epsi}{2}$. While on the other hand, first making use
of \eqref{eqn:aux13} and next of \eqref{eqn:aux12},
\begin{multline*}
\|(V_n A U_n^{-1} - B)P_W\| \leqsl \|(V_n A U_n^{-1} - V Z_n^{-1} U_n |A| U_n^{-1})P_W\|\\
+ \|V Z_n^{-1} U_n |A| U_n^{-1} P_W - Q P_W |B|\,\|\\
\leqsl \frac{\epsi}{4} + \|V Z_n^{-1} U_n |A| U_n^{-1} P_W - V P_W |B|\,\| \to \frac{\epsi}{4}\ (n \to \infty)
\end{multline*}
which clearly shows that for some large $n$ we have $\|V_n A U_n^{-1} - B\| \leqsl \epsi$.\par
To prove (b), pass to adjoints and apply (a) (using (P2)).
\end{proof}

\begin{exm}{UO}
Let $\HHh$ be separable infinite-dimensional, $A \in \BBb_+(\HHh)$ be a noninvertible operator with dense range
and $V \in \BBb(\HHh)$ be a nonunitary isometry. By (P4), $\iota_f(VA) = \iota_f(A) = \aleph_0$. Moreover, $(VA)^* (VA)
= A^2 = A^* A$ and thus, by \PRO{UU}, $VA \in \cloRB_U^U(A)$ which easily gives $V A^2 V^* = (VA) (VA)^*
\in \UUu \overline{\OOo}(A^2)$. On the other hand, $V A^2 V^* \notin \OOo\UUu(A^2)$ (since $\overline{\RrR}(V A^2 V^*)
\neq \HHh$). This implies that both the operators $B := V A^2 V^* + I_{\HHh}$ and $C := A^2 + I_{\HHh}$ are nonnegative,
invertible, non-unitarily equivalent, but $B \in \overline{\OOo}\UUu(C)$. The example shows that the orbit $\OOo\UUu(X)$
is not closed in general (even when $X$ is invertible and nonnegative) and that the description of its closure seems to be
much more difficult than in case of the orbits investigated in this paper.
\end{exm}

\SECT{Application: ideals of $\BBb(\HHh)$}

With use of \THM{orbit}, we may easily point out all closed two-sided ideals of $\BBb(\HHh)$ for nonseparable Hilbert
space $\HHh$. For each infinite cardinal $\alpha \leqsl \dim \HHh$ let $J_{\alpha}$ be the set of all operators
$A \in \BBb(\HHh)$ such that $\iota_r(A) < \alpha$ or $\iota_r(A) = \alpha$ and $\iota_b(A) = 0$ (notice that
$J_{\aleph_0}$ is consists precisely of all compact operators). Our aim is to show that $J_{\alpha}$'s are the only
nontrivial ideals in $\BBb(\HHh)$.

\begin{lem}{binary}
For $A \in \BBb(\HHh)$ \tfcae
\begin{enumerate}[\upshape(a)]
\item $\iota_b(A) = 0$,
\item $A$ is the limit of a sequence $(A_n)_{n=1}^{\infty} \in \BBb(\HHh)$ such that $\iota_r(A_n) < \iota_r(A)$
   for each $n$.
\end{enumerate}
\end{lem}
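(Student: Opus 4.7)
The plan is to handle the two implications separately, using Proposition~\ref{pro:inside} for the forward direction (a)$\Rightarrow$(b), and combining Theorem~\ref{thm:open} with Theorem~\ref{thm:orbit} for the reverse direction (b)$\Rightarrow$(a).

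For (a)$\Rightarrow$(b), I would first note that every finite-dimensional pre-Hilbert space $\EEe$ satisfies $\bB(\EEe) = 1$ (being equal to its own closure), so the assumption $\iota_b(A) = 0$ automatically forces $\iota_r(A)$ to be infinite. Then, applying Proposition~\ref{pro:inside} with $\epsi = 1/n$, I obtain for each $n$ a subspace $V_n \in \Upsilon(A)$ such that $\|A - A P_{V_n}\| \leqsl 1/n$. Since $V_n \in \Upsilon(A)$, the operator $A$ restricted to $V_n$ is bounded below, so $A(V_n)$ is closed in $\HHh$ and is therefore a complete subspace of $\RrR(A)$. The hypothesis $\iota_b(A) = \bB(\RrR(A)) = 0$ means precisely that $\RrR(A)$ contains no complete subspace of dimension $\iota_r(A) = \dim \overline{\RrR}(A)$, hence $\dim A(V_n) < \iota_r(A)$. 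Putting $A_n := A P_{V_n}$ then gives $\iota_r(A_n) = \dim A(V_n) < \iota_r(A)$ together with $A_n \to A$.

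For (b)$\Rightarrow$(a) I would argue by contrapositive. Assume $\iota_b(A) = 1$. Theorem~\ref{thm:open} furnishes a norm neighbourhood $\XXx$ of $A$ such that $A \in \cloRB_G^G(T)$ for every $T \in \XXx$. Now for any sequence $A_n \to A$, eventually $A_n \in \XXx$, and consequently $A \in \cloRB_G^G(A_n)$. Condition (a) of Theorem~\ref{thm:orbit} then forces $\iota_r(A) \leqsl \iota_r(A_n)$ for all such $n$, directly contradicting $\iota_r(A_n) < \iota_r(A)$.

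The argument is essentially bookkeeping on top of the structural results already established: Proposition~\ref{pro:inside} provides the canonical approximation of $A$ by truncations using spectral projections of $|A|$, and Theorem~\ref{thm:open} encodes precisely the obstruction to lowering the range-dimension index along convergent sequences when $\iota_b(A) = 1$. No substantial obstacle is anticipated.
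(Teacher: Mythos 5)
Your proof is correct and takes essentially the same route as the paper: the paper derives (a)$\Rightarrow$(b) from Proposition~\ref{pro:inside} exactly as you do, and derives (b)$\Rightarrow$(a) from Lemma~\ref{lem:main}--(a) and Corollary~\ref{cor:lambda}, which is the same underlying argument you invoke, merely packaged through the already-established Theorems~\ref{thm:open} and~\ref{thm:orbit} (themselves direct consequences of those two results).
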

\begin{proof}
From \PRO{inside} we conclude that (a) implies (b). The inverse implication follows from \LEM{main}--(a) and \COR{lambda}.
\end{proof}

The next result is probably known.

\begin{thm}{ideals}
Let $\HHh$ be a nonseparable Hilbert space. For each infinite $\alpha \leqsl \dim \HHh$, $J_{\alpha}$ is a closed two-sided
ideal in $\BBb(\HHh)$, $J_{\alpha} \neq J_{\alpha'}$ if $\alpha \neq \alpha'$ and every nonzero proper closed two-sided
ideal in $\BBb(\HHh)$ coincides with some $J_{\alpha}$.
\end{thm}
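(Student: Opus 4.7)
The plan is to deduce the three assertions from \THM{orbit}, \LEM{main}, \LEM{binary} and \COR{lambda}. The key auxiliary object is the (generally non-closed) set
\[
   \tilde J_\alpha := \{A\in \BBb(\HHh)\dd\ \iota_r(A)<\alpha\}.
\]
I would first verify, by routine cardinal arithmetic, that $\tilde J_\alpha$ is a two-sided ideal: $\iota_r(A+B)\leqsl\iota_r(A)+\iota_r(B)<\alpha$ because $\alpha$ is infinite, while $\iota_r(SA),\iota_r(AS)\leqsl\iota_r(A)$ follows from $\RrR(AS)\subset\RrR(A)$ and $\overline{\RrR(SA)}\subset\overline{S(\overline{\RrR}(A))}$. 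The first main step is then the equality $J_\alpha=\overline{\tilde J_\alpha}$ (norm closure). The inclusion `$\subset$' is essentially \LEM{binary}: an $A\in J_\alpha\setminus\tilde J_\alpha$ has $\iota_r(A)=\alpha$ and $\iota_b(A)=0$, hence is a norm limit of elements of $\tilde J_\alpha$. For `$\supset$' I would prove that $J_\alpha$ itself is norm-closed: if $A_n\to A$ with $A_n\in J_\alpha$ but $A\notin J_\alpha$, then either $\iota_r(A)>\alpha$ or $\iota_r(A)=\alpha$ with $\iota_b(A)=1$, and in both cases \PRO{indices}\textup{(c)--(d)} supplies $V\in\Upsilon(A)$ with $\xi_A(V)=(\cdot,\alpha,\cdot)\in\Lambda(A)$; by \LEM{main}\textup{(a)} this triple lies in $\Lambda(A_n)$ for all sufficiently large $n$, which via \COR{lambda} forces $A_n\notin J_\alpha$, a contradiction. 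Since the norm closure of any two-sided ideal in a Banach algebra is a closed two-sided ideal, $J_\alpha$ is a closed two-sided ideal of $\BBb(\HHh)$.

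The separation $J_\alpha\neq J_{\alpha'}$ for $\alpha<\alpha'$ is witnessed by the orthogonal projection onto any closed subspace of $\HHh$ of dimension $\alpha$: it has $\iota_r=\alpha$ and $\iota_b=1$, so it belongs to $J_{\alpha'}\setminus J_\alpha$.

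The remaining and most delicate step is the classification: every nonzero proper closed two-sided ideal $I\subset\BBb(\HHh)$ coincides with some $J_\alpha$. Set
\[
   \alpha:=\sup\{\iota_R(A)\dd\ A\in I\},
\]
where $\iota_R$ is as in \REM{Rank}, so that $J_\beta=\{X\dd\ \iota_R(X)\leqsl\beta\}$. Since every nonzero ideal of $\BBb(\HHh)$ contains all finite rank operators (by the standard argument $u\otimes v=(u\otimes e)A(x\otimes v/\|e\|^2)$ for any $A\in I$ with $Ax=e\neq 0$), one has $\alpha\geqsl\aleph_0$; to see that $\alpha\leqsl\dim\HHh$, observe that an $A\in I$ with $\iota_r(A)=\dim\HHh$ and $\iota_b(A)=1$ admits some $V\in\Upsilon(A)$ on which $A$ is a linear homeomorphism onto a closed subspace of full Hilbert dimension---composing with a unitary $U\dd\HHh\to V$ gives $AU\in I$ with a bounded left inverse, yielding $I_{\HHh}\in I$ and denying properness. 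The inclusion $I\subset J_\alpha$ is immediate from the definition of $\alpha$. For the reverse, for each $\gamma<\alpha$ I would pick $A_\gamma\in I$ with $\iota_R(A_\gamma)>\gamma$ (available from the definition of $\sup$: when $\alpha$ is a limit this is cofinality, and when $\alpha$ is a successor the supremum is attained). Since $I$ is norm-closed and invariant under the action of $\GGg(\HHh)\times\GGg(\HHh)$, $\cloRB_G^G(A_\gamma)\subset I$; because $\gamma<\alpha\leqsl\dim\HHh$ forces $\iota_i(B)=\iota_f(B)=\dim\HHh$ for every $B$ with $\iota_r(B)\leqsl\gamma$, the `$(\star)$' clause of \THM{orbit} makes condition (c) automatic, yielding $\{B\dd\ \iota_r(B)\leqsl\gamma\}\subset\cloRB_G^G(A_\gamma)\subset I$. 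Uniting over $\gamma<\alpha$ gives $\tilde J_\alpha\subset I$, whence $J_\alpha=\overline{\tilde J_\alpha}\subset I$, and $I=J_\alpha$.

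The principal obstacle is this final step, specifically the need to ensure that clause \textup{(b)} of \THM{orbit} does not exclude operators $B$ with $\iota_r(B)=\gamma$ and $\iota_b(B)=1$ from $\cloRB_G^G(A_\gamma)$. Choosing $A_\gamma$ with $\iota_R(A_\gamma)$ \emph{strictly} greater than $\gamma$ resolves this: either $\iota_r(A_\gamma)>\gamma$ (and clause \textup{(b)} does not constrain rank-$\leqsl\gamma$ operators), or $\iota_b(A_\gamma)=1$ (in which case clause \textup{(b)} is vacuous), so in both alternatives every $B$ with $\iota_r(B)\leqsl\gamma$ satisfies all three conditions of \THM{orbit}.
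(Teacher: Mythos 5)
Your proof is correct and follows essentially the same route as the paper: the cardinal $\alpha$ you define as $\sup\{\iota_R(A)\dd A\in I\}$ is exactly the paper's ``least cardinal with $J\subset J_\alpha$,'' and the key step of placing $\{B\dd\iota_r(B)\leqsl\gamma\}$ inside $\cloRB_G^G(A_\gamma)\subset I$ via \THM{orbit} and the $(\star)$-clause is the same argument. You simply fill in more detail where the paper is terse (the ideal property and closedness of $J_\alpha$ via $\tilde J_\alpha$, \LEM{binary}, \LEM{main}(a) and \COR{lambda}), and parametrize the final inclusion over $\gamma<\alpha$ rather than over a given $C$, but these are cosmetic differences.
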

\begin{proof}
That $J_{\alpha}$ is a closed two-sided ideal it may easily be infered from \LEM{binary}. It is also immediate that
$J_{\alpha}$ uniquely determines $\alpha$. Let $J$ be a nonzero proper closed two-sided ideal of $\BBb(\HHh)$. Observe that
$J \subset J_{\dim \HHh}$. (Indeed, otherwise there would be $A \in J$ and $V \in \Upsilon(A)$ such that $\dim V
= \dim \HHh$. But then $I_{\HHh}$ would belong to $J$ since $I = X A Y$ for suitable $X, Y \in \BBb(\HHh)$.)
Let $\alpha \geqsl \aleph_0$ be the least cardinal for which $J \subset J_{\alpha}$. We shall show that $J = J_{\alpha}$.
To do this, thanks to \LEM{binary} it is enough to prove that $C \in J$ provided $\iota_r(C) < \alpha$.
A standard argument shows that a nonzero ideal contains all finite rank operators. So, we may assume that $\iota_r(C)$
is infinite. Since $J \not\subset J_{\beta}$ with $\beta = \iota_r(C)$, there is $A \in J$ such that
$\iota_r(A) > \iota_r(C)$ or $\iota_r(A) = \iota_r(C)$ and $\iota_b(A) = 1$. But then, by \THM{orbit},
$C \in \cloRB_G^G(A) \subset J$ (note that ($\star$) is fulfilled for $C$ since $\iota_r(C) < \dim \HHh$).
\end{proof}

\SECT{Indices $\ind$ and $\iota_m$}

In this section $\HHh$ denotes an infinite-dimensional Hilbert space. Our aim is to define $\ind(A)$ for certain operators
$A \in \BBb(\HHh)$ in such a way that this new index extends the well known one (denoted in the same way) for semi-Fredholm
operators.\par
Let $\Dd$ be the class of all pairs $(\alpha,\beta)$ of cardinals such that either $\alpha = \beta < \aleph_0$ or $\alpha$
and $\beta$ are different. For $(\alpha,\beta) \in \Dd$ we define $\alpha - \beta$ as a cardinal or the negative
of a cardinal in a very natural way:
\begin{itemize}
\item if $\alpha$ and $\beta$ are finite, $\alpha - \beta$ is the difference of $\alpha$ and $\beta$ treated as natural
   numbers,
\item if $\alpha > \beta$ and $\alpha$ is infinite, $\alpha - \beta := \alpha$,
\item if $\alpha < \beta$ and $\beta$ is infinite, $\alpha - \beta := -\beta$.
\end{itemize}
Additionally, for simplicity, let us agree with the following notation: $|\alpha| = |-\alpha| := \alpha$ for every cardinal
$\alpha$.\par
Now for any $A \in \BBb(\HHh)$ such that $(\iota_i(A),\iota_f(A)) \in \Dd$ let $\ind(A) = \iota_i(A) - \iota_f(A)$.
Notice that when $\iota_i(A) = \iota_f(A) \geqsl \aleph_0$, $\ind(A)$ is undefined.\par
For every $\gamma$ such that $|\gamma| \leqsl \dim \HHh$ denote by $\Ind_{\gamma}(\HHh)$ the set of all operators
$A \in \BBb(\HHh)$ for which $\ind(A)$ is defined and $\ind(A) = \gamma$, and let $\overline{\Ind}_{\gamma}(\HHh)$ be
the closure of $\Ind_{\gamma}(\HHh)$. Finally, let $\Uind(\HHh)$ stand for the set of all operators for which $\ind$
is undefined. Recall also (see \REM{Rank}) that $\iota_m(A) = \min(\iota_i(A),\iota_f(A))$.\par
We leave this as a simple exercise that $\iota_b(A) = 1$ for $A \in \BBb(\HHh) \setminus \Uind(\HHh)$.\par
The following is a reformulation of \THM{orbit}:

\begin{cor}{orbit}
Let $A \in \BBb(\HHh)$.
\begin{enumerate}[\upshape(I)]
\item If $A \in \Uind(\HHh)$, $\cloRB_G^G(A)$ is the set of all $B \in \Uind(\HHh)$ such that:
   \begin{enumerate}[\upshape(i)]
   \item $\iota_r(B) \leqsl \iota_r(A)$; and $\iota_r(B) < \iota_r(A)$ provided $\iota_b(A) = 0$ and $\iota_b(B) = 1$,
   \item $\iota_m(B) \geqsl \iota_m(A)$.
   \end{enumerate}
\item If $A \notin \Uind(\HHh)$ and $\gamma := \ind(A)$, $\cloRB_G^G(A)$ is the set of all $B \in \Ind_{\gamma}(\HHh)$
   for which $\iota_r(B) \leqsl \iota_r(A)$ and $\iota_m(B) \geqsl \iota_m(A)$ and of all $C \in \Uind(\HHh)$ such that
   $\iota_r(C) \leqsl \iota_r(A)$ and $\iota_m(C) \geqsl |\gamma|$.
\end{enumerate}
\end{cor}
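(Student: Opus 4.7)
The plan is to deduce the corollary from \THM{orbit} by translating its three conditions (a), (b), (c) into the language of $\ind$, $\Uind(\HHh)$, $\iota_m$, $\iota_r$, making full use of the stated observation that $\iota_b(A) = 1$ whenever $A \notin \Uind(\HHh)$. Condition (a) matches the inequality $\iota_r(B) \leqsl \iota_r(A)$ in both cases verbatim, so the substantive work is translating (b) and (c). Throughout I rely on the cardinal-arithmetic identity $\mu + \nu = \max(\mu,\nu)$ when at least one of $\mu,\nu$ is infinite.

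In case (I), with $A \in \Uind(\HHh)$, I set $\kappa := \iota_i(A) = \iota_f(A) \geqsl \aleph_0$, so that $\iota_m(A) = \kappa$. Then (c) of \THM{orbit} reads $\iota_i(B) = \iota_f(B) = \kappa + \alpha$, which is always infinite; hence automatically $B \in \Uind(\HHh)$ and $\iota_m(B) = \kappa + \alpha \geqsl \kappa = \iota_m(A)$. Conversely, given $B \in \Uind(\HHh)$ with $\iota_m(B) \geqsl \kappa$, the choice $\alpha := \iota_m(B)$ gives $\iota_i(A) + \alpha = \iota_f(A) + \alpha = \alpha = \iota_i(B) = \iota_f(B)$, so (c) holds. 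Condition (b) of \THM{orbit} is exactly the second clause of (i).

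In case (II), the exercise yields $\iota_b(A) = 1$, which makes (b) of \THM{orbit} vacuous; only (a) and (c) remain, and I split on whether $B \in \Ind_\gamma(\HHh)$ or $B \in \Uind(\HHh)$, with a further sub-analysis depending on whether $\gamma$ is finite or infinite. When $B \in \Ind_\gamma(\HHh)$, (c) is equivalent to $\iota_m(B) \geqsl \iota_m(A)$: for finite $\gamma$ all four of $\iota_i(A), \iota_f(A), \iota_i(B), \iota_f(B)$ are finite and one takes $\alpha = \iota_m(B) - \iota_m(A)$ as an integer; for infinite $\gamma > 0$ one has $\iota_i(A) = |\gamma|$, $\iota_f(A) < |\gamma|$ (and the analogous pattern for $B$), and the choice $\alpha := \iota_f(B) < |\gamma|$ together with $\iota_i(A) + \alpha = |\gamma|$ does the job; the $\gamma < 0$ case is symmetric. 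When $B \in \Uind(\HHh)$, the equation $\iota_i(A) + \alpha = \iota_f(A) + \alpha$ forces the common value to be infinite, which is equivalent to $\alpha \geqsl |\gamma|$; setting $\alpha := \iota_m(B)$ then yields the stated condition $\iota_m(B) \geqsl |\gamma|$.

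The only genuine labour is the enumeration of subcases in (II); in each, the verification reduces to a direct application of the cardinal-addition rule, and no technical tool beyond \THM{orbit} itself is needed. The main bookkeeping challenge is simply tracking when $\iota_i(B)$ and $\iota_f(B)$ are forced to be finite or infinite according to the size of $\alpha$, but this presents no real obstacle.
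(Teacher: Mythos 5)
Your approach --- translating conditions (a)--(c) of \THM{orbit} into the language of $\ind$, $\Uind(\HHh)$ and $\iota_m$ via cardinal addition --- is exactly the reformulation the paper has in mind (the paper supplies no proof, calling the corollary a reformulation), and case (I) together with the $\Uind(\HHh)$ subcase of (II) are handled correctly. There is, however, a slip in the $B\in\Ind_\gamma(\HHh)$ subcase of (II) with $\gamma$ infinite: you fix $\alpha:=\iota_f(B)$ and check only that $\iota_i(A)+\alpha=|\gamma|=\iota_i(B)$, without verifying the companion equation $\iota_f(A)+\alpha=\iota_f(B)$, and this second equation fails whenever $\iota_f(A)$ and $\iota_f(B)$ are both finite with $\iota_f(A)>0$ (for instance $\iota_f(A)=2$, $\iota_f(B)=3$ gives $\iota_f(A)+\alpha=5\neq 3$). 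The remedy is one further case split: if $\iota_f(B)\geqsl\aleph_0$, keep $\alpha:=\iota_f(B)$, and then $\iota_f(A)+\alpha=\max(\iota_f(A),\iota_f(B))=\iota_f(B)$ because $\iota_f(A)=\iota_m(A)\leqsl\iota_m(B)=\iota_f(B)$; if $\iota_f(B)<\aleph_0$, take $\alpha:=\iota_f(B)-\iota_f(A)$ as a nonnegative integer (allowed since $\iota_f(B)\geqsl\iota_f(A)$), and $\gamma+\alpha=\gamma$ still holds because $\alpha$ is finite. The symmetric case $\gamma<0$ requires the analogous repair with $\iota_i$ in place of $\iota_f$. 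With this patch the proof is complete.
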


The main result of the section is

\begin{thm}{int-bd}
For every $\gamma$ with $|\gamma| \leqsl \dim \HHh$ the set $\Ind_{\gamma}(\HHh)$ is connected and open in $\BBb(\HHh)$
and it coincides with the interior of its closure. The boundary of $\Ind_{\gamma}(\HHh)$ is connected as well and consists
precisely of all $A \in \Uind(\HHh)$ such that $\iota_m(A) \geqsl |\gamma|$.
\end{thm}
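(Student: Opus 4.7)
The plan is to deduce all four assertions from a single orbit-closure identification. For each admissible $\gamma$ I would first fix a canonical representative $A_\gamma \in \Ind_\gamma(\HHh)$ with $\iota_r(A_\gamma) = \dim\HHh$, $\iota_m(A_\gamma) = 0$, and $\iota_b(A_\gamma) = 1$ (take $A_\gamma$ to vanish on a subspace of dimension $\max(\gamma,0)$ and act as a linear isomorphism from its orthogonal complement onto a closed subspace of codimension $\max(-\gamma,0)$). \COR{orbit}(II) then yields
\[
\cloRB_G^G(A_\gamma) = \Ind_\gamma(\HHh) \cup \{C \in \Uind(\HHh) \dd \iota_m(C) \geqsl |\gamma|\},
\]
as both side-conditions reduce to triviality. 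Because $\oRB_G^G(A_\gamma) \subset \Ind_\gamma(\HHh) \subset \cloRB_G^G(A_\gamma)$, this forces $\cll(\Ind_\gamma(\HHh)) = \cloRB_G^G(A_\gamma)$; the resulting equality simultaneously pins down the boundary as $\{A \in \Uind(\HHh) \dd \iota_m(A) \geqsl |\gamma|\}$ and gives the connectedness of $\Ind_\gamma(\HHh)$, which is sandwiched between a connected orbit (the continuous image of the connected group $\GGg(\HHh) \times \GGg(\HHh)$) and its closure.

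Next I would handle openness and boundary-connectedness. For openness, any $A \in \Ind_\gamma(\HHh)$ satisfies $\iota_b(A) = 1$, so \THM{open} produces a neighborhood $\XXx$ of $A$ with $A \in \cloRB_G^G(T)$ for every $T \in \XXx$, and \COR{orbit} then traps each $T$ in $\Ind_\gamma(\HHh)$: the case $T \in \Uind(\HHh)$ contradicts $A \notin \Uind(\HHh)$ by part (I), while $T \in \Ind_\delta(\HHh)$ with $\delta \neq \gamma$ contradicts $A \in \cloRB_G^G(T) \subset \Ind_\delta(\HHh) \cup \Uind(\HHh)$ by part (II). For connectedness of the boundary the same orbit trick works inside $\Uind(\HHh)$: pick $A_0 \in \Uind(\HHh)$ with $\iota_i(A_0) = \iota_f(A_0) = \max(|\gamma|,\aleph_0)$, $\iota_r(A_0) = \dim\HHh$, and $\iota_b(A_0) = 1$; by \COR{orbit}(I) one gets $\cloRB_G^G(A_0) = \{C \in \Uind(\HHh) \dd \iota_m(C) \geqsl |\gamma|\}$ (the threshold $\max(|\gamma|,\aleph_0)$ collapses to $|\gamma|$ because $\iota_m \geqsl \aleph_0$ holds automatically on $\Uind(\HHh)$), exhibiting the boundary as the closure of a connected orbit.

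Finally, to prove $\Ind_\gamma(\HHh) = \intt(\cll(\Ind_\gamma(\HHh)))$, only the inclusion $\supset$ is non-trivial: given $A$ in the boundary, $\iota_m(A) \geqsl \aleph_0$ (since $A \in \Uind(\HHh)$), so one can choose a cardinal $\delta \neq \gamma$ with $|\delta| \leqsl \iota_m(A)$; the already-established boundary description applied to $\Ind_\delta(\HHh)$ gives $A \in \cll(\Ind_\delta(\HHh))$, hence every neighborhood of $A$ meets the open set $\Ind_\delta(\HHh)$, and the latter is disjoint from $\cll(\Ind_\gamma(\HHh))$ (being disjoint from both $\Ind_\gamma(\HHh)$ and $\Uind(\HHh)$). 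The main obstacle is precisely the initial orbit-closure identification $\cll(\Ind_\gamma(\HHh)) = \cloRB_G^G(A_\gamma)$; once that is secured, all four assertions follow from a short case analysis of \COR{orbit}.
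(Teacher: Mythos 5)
Your proof is correct and follows essentially the same route as the paper: you use the same canonical-representative orbit $\cloRB_G^G(A_\gamma)$ via \COR{orbit} to identify $\cll(\Ind_\gamma(\HHh))$ and obtain connectedness, the same appeal to \THM{open} plus \COR{orbit} for openness, and the same ``neighbouring index'' trick (choosing $\delta\ne\gamma$) to pin down the boundary and the interior-of-closure claim, with the boundary exhibited as the closed orbit of an auxiliary operator $A_0\in\Uind(\HHh)$ exactly as the paper does. The only cosmetic differences are that the paper shrinks the neighbourhood $\XXx$ to avoid the closed set $\Uind(\HHh)$ where you instead rule out $T\in\Uind(\HHh)$ by contradiction, and the paper presents the boundary computation via $\overline{\Ind}_\gamma(\HHh)\cap\overline{\Ind}_k(\HHh)$, which is the same observation you make with $\Ind_\delta(\HHh)$.
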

\begin{proof}
Let us first show that
\begin{equation}\label{eqn:Ind}
\overline{\Ind}_{\gamma}(\HHh) = \Ind_{\gamma}(\HHh) \cup \{A \in \Uind(\HHh)\dd\ \iota_m(A) \geqsl |\gamma|\}.
\end{equation}
Let $Z$ be a closed range operator which is a monomorphism or an epimorphism and for which $\ind(Z) = \gamma$.
Notice that $\iota_r(Z) = \dim \HHh$, $\iota_m(Z) = 0$ and $\oRB_G^G(Z) \subset \Ind_{\gamma}(\HHh)$. What is more,
we infer from \COR{orbit} that $\cloRB_G^G(Z)$ coincides with the right hand side expression of \eqref{eqn:Ind}.
This shows that \eqref{eqn:Ind} holds true and that $\Ind_{\gamma}(\HHh)$ is connected (since $\oRB_G^G(Z)$ is connected,
by the connectedness of $\GGg(\HHh)$).\par
Further, by \eqref{eqn:Ind}, $\overline{\Ind}_{\gamma}(\HHh) \cap \overline{\Ind}_k(\HHh) = \{B \in \Uind(\HHh)\dd\
\iota_m(B) \geqsl |\gamma|\}$ for each integer $k \neq \gamma$ and thus the latter set is contained in the boundary
of $\overline{\Ind}_{\gamma}(\HHh)$. So, to end the proof, it is enough to show that $\Ind_{\gamma}(\HHh)$ is open.\par
Fix $A \in \Ind_{\gamma}(\HHh)$. Since $\iota_b(A) = 1$, by \THM{open} there is a neighbourhood $\XXx$ of $A$ such that
\begin{equation}\label{eqn:aux7}
A \in \cloRB_G^G(X)
\end{equation}
for any $X \in \XXx$. Note that $\Uind(\HHh)$ is closed (by \eqref{eqn:Ind}: $\Uind(\HHh) = \overline{\Ind}_0(\HHh)
\cap \overline{\Ind}_1(\HHh)$) and therefore we may assume that $\XXx$ is disjoint from $\Uind(\HHh)$. But then, thanks
to \eqref{eqn:aux7} and \COR{orbit}, $\ind(A) = \ind(X)$ for $X \in \XXx$ and hence $\XXx \subset \Ind_{\gamma}(\HHh)$.\par
Finally, to show that the boundary of $\Ind_{\gamma}(\HHh)$ is connected, take a closed range operator $T \in \Uind(\HHh)$
such that $\iota_r(T) = \dim \HHh$ and $\iota_m(T) = \max(\aleph_0,|\gamma|)$ and observe, applying again \COR{orbit},
that the boundary coincides with $\cloRB_G^G(T)$.
\end{proof}

The above result shows that $\Uind(\HHh)$ is closed, nowhere dense and connected. Notice also that $\Ind_{\gamma}(\HHh)$
consists of closed range operators iff $\gamma \in \ZZZ \cup \{-\aleph_0,\aleph_0\}$. So, in case of a nonseparable Hilbert
space $\HHh$ we may define a \textit{semi-Fredholm} operator on $\HHh$ as a bounded operator $A \notin \Uind(\HHh)$
such that $\ind(A) \in \ZZZ \cup \{-\aleph_0,\aleph_0\}$. Under such a definition, semi-Fredholm operators automatically
have closed ranges. Observe also that in a separable Hilbert space the class $\Uind$ coincides with the class of all
non-semi-Fredholm operators. So, \THM{int-bd} generalizes the result of Mbekhta \cite{mbekhta}.\par
With use of \COR{orbit} we are also able to show

\begin{pro}{cut}
For each $\gamma$ with $|\gamma| \leqsl \dim \HHh$ and a positive cardinal $\mM$ the set $\Ind_{\gamma}^{\mM}(\HHh)$ of all
$A \in \Ind_{\gamma}(\HHh)$ for which $\iota_m(A) < \mM$ is open and dense in $\Ind_{\gamma}(\HHh)$.
\end{pro}
\begin{proof}
First of all note that if $\mM \geqsl \max(\aleph_0,|\gamma|)$, then $\Ind_{\gamma}^{\mM}(\HHh) = \Ind_{\gamma}(\HHh)$.
So, we may assume that
\begin{equation}\label{eqn:aux5}
\mM < \max(\aleph_0,|\gamma|).
\end{equation}
Let $Z$ be as in the proof of \THM{int-bd}. Observe that $\oRB_G^G(Z) \subset \Ind_{\gamma}^{\mM}(\HHh)$ and therefore
the latter set is dense in $\Ind_{\gamma}(\HHh)$. What is more, thanks to \eqref{eqn:aux5}, there exists a closed range
operator $Z_{\mM} \in \BBb(\HHh) \cap \Ind_{\gamma}(\HHh)$ such that $\iota_m(Z_{\mM}) = \mM$ and $\iota_r(Z_{\mM})
= \dim \HHh$. Now by \COR{orbit}, $\Ind_{\gamma}^{\mM}(\HHh) = \Ind_{\gamma}(\HHh) \setminus \cloRB_G^G(Z_{\mM})$
which finishes the proof.
\end{proof}

\begin{cor}{nowhere_dense}
The closure of $\oRB_G^G(A)$ has nonempty interior iff $A$ or $A^*$ is an epimorphism.
\end{cor}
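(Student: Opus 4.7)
The plan is to exploit the classification in \COR{orbit}, the openness of the index classes from \THM{int-bd}, and the density statement of \PRO{cut}. For the forward direction, suppose first that $A$ is an epimorphism, so $\iota_f(A) = 0$. When $A$ is additionally invertible, $\cloRB_G^G(A) \supset \oRB_G^G(A) \supset \GGg(\HHh)$ is already open; otherwise $\gamma := \ind(A) = \iota_i(A)$ is a nonzero cardinal with $|\gamma| \leqsl \dim \HHh$ (by (P3)), and substituting $\iota_r(A) = \dim \HHh$ and $\iota_m(A) = 0$ into the conditions of \COR{orbit}\textup{(II)} shows that $\cloRB_G^G(A) \supset \Ind_\gamma(\HHh)$, a nonempty open set by \THM{int-bd}. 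The case of $A^*$ being an epimorphism reduces to the previous one by passing to adjoints and invoking the symmetry (P2).

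For the backward direction, assume that neither $A$ nor $A^*$ is an epimorphism, which is equivalent to $\iota_i(A) > 0$ and $\iota_f(A) > 0$; in particular $\mM := \iota_m(A)$ is a positive cardinal. I split into two cases. If $A \in \Uind(\HHh)$, then \COR{orbit}\textup{(I)} forces $\cloRB_G^G(A) \subset \Uind(\HHh)$, which is nowhere dense by the remark following \THM{int-bd}, and the conclusion is immediate. If $A \notin \Uind(\HHh)$, put $\gamma := \ind(A)$; then \COR{orbit}\textup{(II)} yields
$$\cloRB_G^G(A) \subset \{B \in \Ind_\gamma(\HHh)\dd\ \iota_m(B) \geqsl \mM\} \cup \Uind(\HHh),$$
and \PRO{cut}, applied with these $\gamma$ and $\mM$, says that $\Ind_\gamma^{\mM}(\HHh)$ is open and dense in $\Ind_\gamma(\HHh)$, so the first set on the right has empty interior in $\Ind_\gamma(\HHh)$ and therefore in $\BBb(\HHh)$, since $\Ind_\gamma(\HHh)$ is itself open.

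The step I expect to be the main obstacle is the final bookkeeping of combining these two empty-interior statements into empty interior of the union. The argument I plan to use is as follows: let $U$ be open and contained in the union. Since $\Uind(\HHh)$ is closed, $U \setminus \Uind(\HHh)$ is open; it lies inside $\Ind_\gamma(\HHh)$ and inside $\{B \in \Ind_\gamma(\HHh)\dd\ \iota_m(B) \geqsl \mM\}$, whose interior is empty by the preceding paragraph, forcing $U \setminus \Uind(\HHh) = \varempty$. Hence $U \subset \Uind(\HHh)$, and then $U = \varempty$ since $\Uind(\HHh)$ is nowhere dense. This yields empty interior of $\cloRB_G^G(A)$ and completes the proof.
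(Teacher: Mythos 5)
Your proof is correct and follows essentially the same strategy as the paper's: the necessity argument — ruling out $A\in\Uind(\HHh)$ via nowhere-density, then applying \PRO{cut} with $\mM=\iota_m(A)>0$ and $\gamma=\ind(A)$ — is exactly the paper's argument, with the final topological bookkeeping made more explicit. For sufficiency the paper simply asserts that $\oRB_G^G(A)$ is itself open when $A$ or $A^*$ is an epimorphism (the orbit is then the set of surjective, resp.\ co-surjective, operators of a fixed nullity, which is open), whereas you instead use \COR{orbit} together with $\iota_r(A)=\dim\HHh$ and $\iota_m(A)=0$ to show that $\cloRB_G^G(A)$ already contains the whole open set $\Ind_\gamma(\HHh)$; both routes are fine, and yours is a bit more self-contained within the machinery of the section.
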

\begin{proof}
The sufficiency is clear ($\oRB_G^G(A)$ is open provided $A$ or $A^*$ is an epimorphism). To see the necessity, first
note that the nonemptiness of the interior of $\cloRB_G^G(A)$ implies that $A \notin \Uind(\HHh)$, and it suffices
to show that $\mM := \iota_m(A) = 0$ (the latter condition is equivalent to the epimorphicity of $A$ or $A^*$). Suppose,
for the contrary that $\mM > 0$. Then $\oRB_G^G(A) \subset \overline{\Ind}_{\gamma}(\HHh) \setminus
\Ind_{\gamma}^{\mM}(\HHh)$ where $\gamma = \ind(A)$. Now it follows from \PRO{cut} that $\OOo_G^G(A)$ is nowhere dense.
A contradiction.
\end{proof}

\begin{rem}{ind}
The index $\ind$ may clearly be defined by the same formula in spaces $\BBb(\HHh,\KKk)$. All the results of the section
have their (natural) counterparts in such spaces when $\dim \HHh = \dim \KKk \geqsl \aleph_0$, that is,
when $(\dim \HHh,\dim \KKk) \notin \Dd$. In the opposite, when $(\dim \HHh,\dim \KKk) \in \Dd$, one may easily prove
(using (P3)) that $(\iota_i(X),\iota_f(X)) \in \Dd$ and $\ind(X) = \dim \HHh - \dim \KKk$
for every $X \in \BBb(\HHh,\KKk)$. So, the restriction (in this section) of our investigations to operators acting
on a one Hilbert space was reasonable and justified.
\end{rem}

\SECT{One-sided actions}

\begin{thm}{U}
Let $A \in \BBb(\HHh,\KKk)$.
\begin{enumerate}[\upshape(a)]
\item $\cloRB^U(A)$ is the set of all $B \in \BBb(\HHh,\KKk)$ such that
   \begin{equation}\label{eqn:U-}
   B^* B = A^* A \quad \textup{and} \quad \iota_f(B) = \iota_f(A).
   \end{equation}
\item $\cloRB_U(A) = \{B \in \BBb(\HHh,\KKk)\dd\ B B^* = A A^* \textup{ and } \iota_i(B) = \iota_i(A)\}$.
\end{enumerate}
\end{thm}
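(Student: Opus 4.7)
The plan is to prove part (a) by showing both inclusions and then to derive (b) from (a) by taking adjoints. In both directions of (a) the starting point is the same reduction: $B^*B = A^*A$ is equivalent to $|B| = |A|$, so writing the polar decompositions $A = Q_A|A|$ and $B = Q_B|A|$, the partial isometries $Q_A, Q_B$ share the common initial space $M := \overline{\RrR}(|A|)$ and map it isometrically onto $\overline{\RrR}(A)$ and $\overline{\RrR}(B)$, respectively. Put $a := \dim(\KKk \ominus \overline{\RrR}(A))$, $b := \dim(\KKk \ominus \overline{\RrR}(B))$, and $c := \IC(\RrR(A))$; since $Q_A$ and $Q_B$ induce linear isometries between $\RrR(|A|)$, $\RrR(A)$, and $\RrR(B)$, one also has $c = \IC(\RrR(B))$, so the condition $\iota_f(B) = \iota_f(A)$ becomes the cardinal equation $a + c = b + c$.

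The main tool is the family of spectral subspaces $V_\epsi := E_{|A|}([\epsi,+\infty))\HHh$ already used in the proof of \PRO{inside}. These satisfy $|A|(V_\epsi) = V_\epsi$, $\|A(I_{\HHh}-P_{V_\epsi})\| \leqsl \epsi$, $\|B(I_{\HHh}-P_{V_\epsi})\| \leqsl \epsi$, and the restrictions $Q_A|_{V_\epsi}$, $Q_B|_{V_\epsi}$ are isometries of $V_\epsi$ onto closed subspaces of $\overline{\RrR}(A)$ and $\overline{\RrR}(B)$. With $d_\epsi := \dim(M \ominus V_\epsi)$, the orthogonal decomposition $\KKk \ominus Q_A(V_\epsi) = (\KKk \ominus \overline{\RrR}(A)) \oplus Q_A(M \ominus V_\epsi)$ (and the analogue for $B$) gives $\dim(\KKk \ominus Q_A(V_\epsi)) = a + d_\epsi$ and $\dim(\KKk \ominus Q_B(V_\epsi)) = b + d_\epsi$. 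A key auxiliary fact I will establish is $\min_{\epsi>0} d_\epsi = c$, with the minimum attained: each $V_\epsi$ is a complete subspace of $\RrR(|A|)$ (so $d_\epsi \geqsl c$), while every complete subspace of $\RrR(|A|)$ is contained in some $V_\epsi$ by the open mapping theorem applied to $|A|$ restricted to its preimage, and cardinals are well-ordered.

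For sufficiency, assume $B^*B = A^*A$ and $\iota_f(B) = \iota_f(A)$. For every $\epsi > 0$ I define $U_\epsi \in \UUu(\KKk)$ as a unitary extension of the isometry $Q_A(V_\epsi) \ni Q_A y \mapsto Q_B y \in Q_B(V_\epsi)$. Such an extension exists precisely when $a + d_\epsi = b + d_\epsi$; granted $a + c = b + c$ and $d_\epsi \geqsl c$, a short case analysis (on whether $c = 0$ or $c \geqsl \aleph_0$, and on the position of $a, b$ relative to $c$) shows that this identity holds in every case. Since $|A|(V_\epsi) = V_\epsi$ forces $U_\epsi A = B$ on $V_\epsi$, the estimate $\|U_\epsi A - B\| \leqsl \|U_\epsi A(I_{\HHh}-P_{V_\epsi})\| + \|B(I_{\HHh}-P_{V_\epsi})\| \leqsl 2\epsi$ places $B$ in $\cloRB^U(A)$.

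For necessity, starting from $U_n A \to B$ with $U_n \in \UUu(\KKk)$, continuity of multiplication yields $B^*B = A^*A$, and since $|A|$ is bounded below by $\epsi$ on $V_\epsi$ the convergence upgrades to $U_n Q_A|_{V_\epsi} \to Q_B|_{V_\epsi}$ in operator norm. These are isometries into $\KKk$, so the associated range projections converge in norm; by the classical fact that orthogonal projections at norm distance less than $1$ are unitarily equivalent, the dimensions of the orthogonal complements of their ranges coincide for large $n$, and the unitarity of $U_n$ converts this into $a + d_\epsi = b + d_\epsi$. Choosing $\epsi$ with $d_\epsi = c$ then delivers $a + c = b + c$, i.e.\ $\iota_f(A) = \iota_f(B)$. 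Part (b) is immediate once (a) is known: $B \in \cloRB_U(A) \iff B^* \in \cloRB^U(A^*)$ because the adjoint is a norm-homeomorphism, and applying (a) to the pair $(A^*,B^*)$ together with property (P2) translates the two conditions into $BB^* = AA^*$ and $\iota_i(B) = \iota_i(A)$. The main obstacle throughout is the cardinal bookkeeping tying $\iota_f(A) = \iota_f(B)$ to the extendability condition $a + d_\epsi = b + d_\epsi$; the remaining steps are routine, following the scheme of the previous sections.
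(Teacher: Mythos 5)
Your sufficiency argument is essentially the paper's: spectral subspaces $V_\epsi$ of $|A|=|B|$, polar decompositions $A=Q_A|A|$ and $B=Q_B|A|$, and a unitary extension of $Q_Ay\mapsto Q_By$ once the co-dimensions $\dim(\KKk\ominus Q_A(V_\epsi))$ and $\dim(\KKk\ominus Q_B(V_\epsi))$ are shown to agree. Note that this direction only uses the trivial inequality $d_\epsi\geqsl\IC(\RrR(|A|))$, not the ``minimum attained'' claim. Your cardinal case analysis establishing $a+d_\epsi=b+d_\epsi$ is correct; the paper avoids it by decomposing $\overline{\RrR}(|A|)\ominus |A|(V_\epsi)=E\oplus F$ with $\dim E=\IC(\RrR(|A|))$, which gives $\dim(\KKk\ominus A(V_\epsi))=\iota_f(A)+\dim F$ and $\dim(\KKk\ominus B(V_\epsi))=\iota_f(B)+\dim F$ directly and case-free. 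Deriving (b) from (a) by passing to adjoints is also what the paper does.

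The genuine flaw is in your necessity step, in the justification of the ``key auxiliary fact'' $\min_{\epsi>0}d_\epsi=\IC(\RrR(|A|))$ with the minimum attained. The claim that every complete subspace $W$ of $\RrR(|A|)$ is contained in some $V_\epsi$ is false: already a one-dimensional $W$ spanned by a vector of $\RrR(|A|)$ whose spectral components are nonzero at arbitrarily small scales is not contained in any $V_\epsi$. The open-mapping/closed-graph argument only yields that $|A|$ is bounded below on $|A|^{-1}(W)\cap\overline{\RrR}(|A|)$; being bounded below by $\epsi$ on a subspace does not place that subspace inside $E([\epsi,+\infty))\HHh$. The fact you need is nevertheless correct --- it is \PRO{IC,b}(a) applied to $\RrR(|A|)=\SsS(\HHh_1,\HHh_2,\ldots)$ with $\HHh_n$ the range of the spectral projection of $|A|$ over $[2^{-n},2^{-n+1})$ --- so the gap is repairable by citation rather than by your containment argument. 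But the whole detour is unnecessary: since $\UUu(\KKk)$ is a group, $B\in\cloRB^U(A)$ gives $A\in\cloRB^U(B)$, hence $\cloRB_G^G(A)=\cloRB_G^G(B)$, and \COR{the_same} yields $\iota_f(B)=\iota_f(A)$ at once; this is the paper's route for necessity and bypasses projections, the case analysis, and the attained-minimum fact altogether.
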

\begin{proof}
(a): First of all, note that $B \in \cloRB^U(A)$ iff $A \in \cloRB^U(B)$. So, thanks to \COR{the_same}, $\iota_f(B)
= \iota_f(A)$ for $B \in \cloRB^U(A)$. It is also clear that $B^* B = A^* A$ for such $B$.\par
Conversely, take $B$ satisfying \eqref{eqn:U-} and put $D = |A|\ (= |B|)$. Fix $\epsi > 0$ and take a subspace
$V \in \Upsilon(D)$ contained in $\RrR(D)$ such that
\begin{equation}\label{eqn:aux2}
\|D - D P_V\| \leqsl \epsi
\end{equation}
(cf. \PRO{inside}). Since $D(V)$ is a complete subspace of $\RrR(D)$, $\dim (\overline{\RrR}(D) \ominus D(V))
\geqsl \IC(\RrR(D))$ and thus there are mutually orthogonal closed subspaces $E$ and $F$ of $\HHh$ such that
$\overline{\RrR}(D) \ominus D(V) = E \oplus F$ and $\dim E = \IC(\RrR(D))$. Recall that $\IC(\RrR(A)) = \IC(\RrR(D))
= \IC(\RrR(B))$. Let $A = Q_A D$ and $B = Q_B D$ be the polar decompositions of $A$ and $B$, respectively. Observe that
$Q_A(D(V) \oplus E \oplus F) = A(V) \oplus Q_A(E) \oplus Q_A(F)$ and hence $\dim (\KKk \ominus A(V)) = \dim F
+ \iota_f(A)$. For the same reason, $\dim (\KKk \ominus B(V)) = \dim F + \iota_f(B)$. So, by \eqref{eqn:U-}, there is
a unitary operator $U_0$ of $\KKk \ominus A(V)$ onto $\KKk \ominus B(V)$. Now it suffices to define $U \in \UUu(\KKk)$ by:
$U = Q_B (Q_A\bigr|_{D(V)})^{-1}$ on $A(V)$ and $U = U_0$ on the orthogonal complement of $A(V)$. Finally we have
$UA\bigr|_V = B|_V$ and therefore $\|U A - B\| \leqsl 2\epsi$ (by \eqref{eqn:aux2}).\par
In order to prove (b), pass to adjoint operators and apply (a).
\end{proof}

\begin{cor}{separ}
Let $\HHh$ be a separable Hilbert space and let $A \in \BBb(\HHh)$ be such that $\RrR(A)$ is nonclosed.
Then $\cloRB^U(A) = \{B \in \BBb(\HHh)\dd\ B^* B = A^* A\}$ and $\cloRB_U(A) = \{B \in \BBb(\HHh)\dd\ B B^* = A A^*\}$.
\end{cor}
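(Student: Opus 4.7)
The plan is to invoke \THM{U} directly and show that, under the hypotheses of the corollary, the extra conditions $\iota_f(B) = \iota_f(A)$ (respectively $\iota_i(B) = \iota_i(A)$) appearing there are automatic. For part (a), I would start from the observation that $B^*B = A^*A$ is equivalent to $|B| = |A|$, since $|X| = \sqrt{X^*X}$ is uniquely determined by $X^*X$ as a nonnegative operator. Hence $\RrR(|B|) = \RrR(|A|)$.

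Next, I would record that for any bounded operator $X$ the range $\RrR(X)$ is closed if and only if $\RrR(|X|)$ is closed; this follows from the polar decomposition $X = Q|X|$, since $Q$ restricts to an isometry of $\overline{\RrR}(|X|)$ onto $\overline{\RrR}(X)$ and carries $\RrR(|X|)$ onto $\RrR(X)$. So, under our assumption that $\RrR(A)$ is nonclosed, we get that $\RrR(|A|) = \RrR(|B|)$ is nonclosed, and therefore $\RrR(B)$ is nonclosed as well.

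Since $\HHh$ is separable, property (P4) stated before \LEM{lambda} applies to both $A$ and $B$, giving $\iota_f(A) = \iota_i(A) = \aleph_0 = \iota_i(B) = \iota_f(B)$. In particular $\iota_f(B) = \iota_f(A)$, so the conditions of \THM{U}(a) reduce to $B^*B = A^*A$, which proves the first equality.

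For part (b), I would either repeat the argument with $BB^* = AA^*$ in place of $B^*B = A^*A$ (using $|B^*| = |A^*|$, the polar decomposition of $B^*$, and the fact that closedness of $\RrR(B)$ is equivalent to closedness of $\RrR(B^*)$), or simply pass to adjoints and apply part (a). No real obstacle is expected; the only point that needs care is to make sure the closedness (or lack thereof) of the range transfers from $A$ to $B$ via the identity $|B| = |A|$, which is handled uniformly through the polar decomposition.
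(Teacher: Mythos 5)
Your proposal is correct and takes exactly the route the paper intends (the corollary is left without proof precisely because it reduces to \THM{U} together with (P4)): the key observations that $B^*B = A^*A$ forces $|B| = |A|$, that nonclosedness of range passes through the polar decomposition, and that (P4) then pins down all four indices to $\aleph_0$, are the right ones. The adjoint argument for part (b) is also the standard reduction used elsewhere in the paper.
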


The case of the closures of orbits $\OOo_G$ and $\OOo^G$ is much more complicated. For need of their descriptions,
let us define $L_+(A)$ for $A \in \BBb_+(\HHh)$ as the set of all $B \in \BBb_+(\HHh)$ such that $B \leqsl c A$ for some
scalar $c > 0$, and let $\LLl_+(A)$ be the closure of $L_+(A)$. By Theorem~2.1 of \cite{ranges}, for an operator
$B \in \BBb_+(\HHh)$,
\begin{equation}\label{eqn:range}
B \in L_+(A) \iff \RrR(\sqrt{B}) \subset \RrR(\sqrt{A}),
\end{equation}
iff $\sqrt{B} = \sqrt{A} T$ for some $T \in \BBb(\HHh)$. Observe that the latter condition gives
$B = \sqrt{A} TT^* \sqrt{A}$. Conversely, if $B = \sqrt{A} C \sqrt{A}$ for some $C \in \BBb_+(\HHh)$,
then $B \leqsl \|C\| A$, that is, $B \in L_+(A)$. Thus we have obtained that, whenever $A, B \in \BBb_+(\HHh)$:
\begin{equation}\label{eqn:L+}
B \in L_+(A) \iff \exists C \in \BBb_+(\HHh)\dd\ B = \sqrt{A} C \sqrt{A}.
\end{equation}
It is clear that $\LLl_+(A)$ is a cone (i.e. $tB + sC \in \LLl_+(A)$ whenever $B, C \in \LLl_+(A)$ and $t,s \geqsl 0$).
Other properties of $\LLl_+(A)$ are established in the following

\begin{pro}{L+}
Let $A \in \BBb_+(\HHh)$.
\begin{enumerate}[\upshape(a)]
\item $B \in \LLl_+(A) \implies \overline{\RrR}(B) \subset \overline{\RrR}(A)$.
\item $B \in \LLl_+(A) \implies \LLl_+(B) \subset \LLl_+(A)$.
\item Let $B \in \BBb_+(\HHh)$ and $E\dd \Bb(\RRR_+) \to \BBb(\HHh)$ be the spectral measure of $B$.
   Then $B \in \LLl_+(A)$ iff $E([\frac1n,+\infty)) \in \LLl_+(A)$ for each $n \geqsl 1$.
\item Let $P$ be an orthogonal projection. $P \in \LLl_+(A)$ iff there is a sequence $P_1,P_2,\ldots$ of orthogonal
   projections which converge to $P$ and whose ranges are contained in the range of $\sqrt{A}$.
\item If $B \in \BBb_+(\HHh)$ is compact and $\RrR(B) \subset \overline{\RrR}(A)$, then $B \in \LLl_+(A)$.
\end{enumerate}
\end{pro}
\begin{proof}
The point (a) follows from \eqref{eqn:L+} and the connection $\overline{\RrR}(A) = \overline{\RrR}(\sqrt{A})$.\par
(b): Suppose $B = \lim_{n\to\infty} B_n$ with $B_n \in L_+(A)$. Then $\sqrt{B} = \lim_{n\to\infty} \sqrt{B_n}$.
Now if $C \in L_+(B)$, $C = \sqrt{B} D \sqrt{B}$ for some $D \in \BBb_+(\HHh)$ (by \eqref{eqn:L+}). So,
$C = \lim_{n\to\infty} \sqrt{B_n} D \sqrt{B_n}$. But (again by \eqref{eqn:L+}) $\sqrt{B_n} D \sqrt{B_n} \in L_+(B_n)
\subset L_+(A)$. This shows that $L_+(B) \subset \LLl_+(A)$ and we are done.\par
(c): Let $P_n = E([\frac1n,+\infty))$. Note that $BP_n \leqsl B$, $BP_n \to B\ (n \to \infty)$
and $\frac1n P_n \leqsl BP_n \leqsl \|B\|P_n$. So, it suffices to apply (b).\par
(d): The sufficiency follows from \eqref{eqn:range}. To prove the necessity, take a sequence $A_1,A_2,\ldots \in \LLl_+(A)$
convergent to $P$. Let $V = \RrR(P) \in \Upsilon(P)$ and let $N \geqsl 1$ and $Z_N,Z_{N+1},\ldots$ be as in \LEM{main}--(a)
for $\KKk := \HHh$ and $A := P$. We may assume that $N = 1$. Put $P_n = P_{A_n(V)}$. Observe that $\RrR(P_n)
\subset \RrR(\sqrt{A})$ (since $A_n \in L_+(A)$ and thanks to \eqref{eqn:range}). Finally, $P_n = Z_n P Z_n^{-1}$
(because $Z_n(V) = A_n(V)$) and therefore $\lim_{n\to\infty} P_n = \lim_{n\to\infty}(Z_n P) (Z_n P)^* = P \cdot P^*
= P$.\par
(e): Thanks to (c), it suffices to show that every finite rank orthogonal projection whose image is contained
in $\overline{\RrR}(A)$ is a member of $\LLl_+(A)$ which we leave as a simple exercise.
\end{proof}

\begin{cor}{comp}
For a compact operator $A \in \BBb_+(\HHh)$, $\LLl_+(A)$ consists of all compact operators $B \in \BBb_+(\HHh)$
such that $\RrR(B) \subset \overline{\RrR}(A)$.
\end{cor}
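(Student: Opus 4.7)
The backward inclusion is immediate: if $B \in \BBb_+(\HHh)$ is compact with $\RrR(B) \subset \overline{\RrR}(A)$, then \PRO{L+}\textup{(e)} gives $B \in \LLl_+(A)$. The compactness of $A$ is not needed here.

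For the forward inclusion, let $B \in \LLl_+(A)$. The range containment $\overline{\RrR}(B) \subset \overline{\RrR}(A)$, hence $\RrR(B) \subset \overline{\RrR}(A)$, is exactly \PRO{L+}\textup{(a)}. The remaining content is that compactness of $A$ forces $B$ to be compact as well. I will first show that every member of $L_+(A)$ is compact: if $C \in L_+(A)$, then by \eqref{eqn:L+} there exists $T \in \BBb(\HHh)$ with $\sqrt{C} = \sqrt{A}\,T$; since $A$ is compact and positive, $\sqrt{A}$ is compact, hence $\sqrt{C} = \sqrt{A}\,T$ is compact, and consequently $C = \sqrt{C}\cdot\sqrt{C}$ is compact. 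Thus $L_+(A)$ is contained in the (norm-closed) ideal $\KKk(\HHh)$ of compact operators, so its closure $\LLl_+(A)$ is contained there as well, yielding that $B$ is compact.

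Neither step presents a real obstacle: the backward direction is a direct quotation of \PRO{L+}\textup{(e)}, and the forward direction only combines \PRO{L+}\textup{(a)} with the square-root characterization \eqref{eqn:L+} and the standard fact that compact operators form a closed two-sided ideal. The mildly delicate point is invoking \eqref{eqn:L+} to convert $C \leqsl c A$ into a factorization of $\sqrt{C}$ through $\sqrt{A}$, which is precisely where the hypothesis that $A$ is compact is used.
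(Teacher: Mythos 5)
Your proof is correct, and since the paper states the corollary without proof, it amounts to supplying the intended argument: the backward inclusion is \PRO{L+}\textup{(e)} verbatim, the range containment in the forward direction is \PRO{L+}\textup{(a)}, and the compactness of $B$ follows because $L_+(A)\subset\KKk(\HHh)$ (via the factorization $\sqrt{C}=\sqrt{A}\,T$ from \eqref{eqn:range} and compactness of $\sqrt{A}$) and $\KKk(\HHh)$ is norm closed. One small imprecision in your closing remark: the factorization $\sqrt{C}=\sqrt{A}\,T$ from the inequality $C\leqsl cA$ is Douglas' range-inclusion lemma and does not use compactness of $A$ at all; compactness enters only in the subsequent step, where $\sqrt{A}$ compact forces $\sqrt{C}=\sqrt{A}\,T$ and hence $C=\sqrt{C}\,\sqrt{C}$ to be compact. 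This is a matter of exposition and does not affect the validity of the argument.
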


\begin{exm}{L+}
Let $(\HHh,\scalarr)$ be infinite-dimensional and separable, and let $(e_n)_{n=1}^{\infty}$ be an orthonormal basis
of $\HHh$. Put $A\dd \HHh \oplus \HHh \ni (x,y) \mapsto (x,\sum_{n=1}^{\infty} \frac{\scalar{y}{e_n}}{2^n} e_n)
\in \HHh \oplus \HHh$, $V = \{(x,y) \in \HHh \oplus \HHh\dd\ x = 0\}$ and $U\dd \HHh \oplus \HHh \ni (x,y) \mapsto (y,x)
\in \HHh \oplus \HHh$. Observe that $A \in \BBb_+(\HHh \oplus \HHh)$, $V \subset \bar{\RrR}(A) = \HHh \oplus \HHh$,
$U \in \UUu(\HHh \oplus \HHh)$ and $U(V) \subset \RrR(A)$. However, $P_V \notin \LLl_+(A)$. The example shows that
if $A$ is noncompact and the range of $A$ is nonclosed, the description of $\LLl_+(A)$ is not so easy as stated
in \COR{comp}. This issue will be investigated elsewhere.
\end{exm}

As the next result shows, the cones $\LLl_+(AA^*)$ and $\LLl_+(A^*A)$ play an important role in the description
of $\cloRB_G(A)$ and $\cloRB^G(A)$.

\begin{thm}{G}
Let $A \in \BBb(\HHh,\KKk)$.
\begin{enumerate}[\upshape(a)]
\item $\cloRB_G(A)$ consists of precisely those $B \in \BBb(\HHh,\KKk)$ such that $BB^* \in \LLl_+(AA^*)$ and
   \begin{equation}\label{eqn:one}
   \iota_i(B) = \iota_i(A) + \iota_i(B^*\bigr|_{\overline{\RrR}(A)}).
   \end{equation}
\item $\cloRB^G(A)$ consists of precisely those $B \in \BBb(\HHh,\KKk)$ such that $B^*B \in \LLl_+(A^*A)$
   and $\iota_f(B) = \iota_f(A) + \iota_i(B\bigr|_{\overline{\RrR}(A^*)})$.
\end{enumerate}
\end{thm}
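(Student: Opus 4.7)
I plan to reduce (b) to (a) by taking adjoints: \PRO{orbit}(e) together with norm-continuity of $X \mapsto X^*$ yields $\cloRB^G(A) = \{B^* \colon B \in \cloRB_G(A^*)\}$, so applying (a) to $A^*$ and translating via $\iota_i(X^*) = \iota_f(X)$ (see (P2)) gives (b). The main work lies in (a). Throughout I will exploit the inclusion $\cloRB_G(A) \subset \cloRB_G^G(A)$, inherited from $\oRB_G(A) \subset \oRB_G^G(A)$, so that \THM{orbit} is freely available.

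\textbf{Necessity.} Suppose $B = \lim_n A L_n^{-1}$ with $L_n \in \GGg(\HHh)$. From $A L_n^{-1}(L_n^{-1})^* A^* \leqsl \|L_n^{-1}\|^2 AA^*$ I read off $A L_n^{-1}(L_n^{-1})^* A^* \in L_+(AA^*)$, and passing to the limit gives $BB^* \in \LLl_+(AA^*)$. \PRO{L+}(a) then forces $\overline{\RrR}(B) \subset \overline{\RrR}(A)$, so that
\[\NnN(B^*|_{\overline{\RrR}(A)}) = \overline{\RrR}(A) \ominus \overline{\RrR}(B) \quad \text{and} \quad \RrR(B^*|_{\overline{\RrR}(A)}) = \RrR(B^*),\]
whence $\iota_i(B^*|_{\overline{\RrR}(A)}) = \dim(\overline{\RrR}(A) \ominus \overline{\RrR}(B)) + \IC(\RrR(B))$. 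Next, \THM{orbit} applied to $B \in \cloRB_G^G(A)$ furnishes a cardinal $\gamma$ with $\iota_i(B) = \iota_i(A) + \gamma$ and $\iota_f(B) = \iota_f(A) + \gamma$. I will identify $\gamma$ with $\iota_i(B^*|_{\overline{\RrR}(A)})$ by expanding $\iota_f(B)$ along the orthogonal decomposition $\KKk = (\KKk \ominus \overline{\RrR}(A)) \oplus (\overline{\RrR}(A) \ominus \overline{\RrR}(B)) \oplus \overline{\RrR}(B)$ and comparing with $\iota_f(A) = \dim(\KKk \ominus \overline{\RrR}(A)) + \IC(\RrR(A))$; the identification holds after cardinal absorption of $\IC(\RrR(A))$, justified by \PRO{indices} and (P3).

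\textbf{Sufficiency.} Assume both conditions, fix $\epsi > 0$, and construct $L \in \GGg(\HHh)$ with $\|AL^{-1} - B\| < \epsi$. First, \PRO{inside} applied to $B$ yields $W \in \Upsilon(B)$ with $|B|(W) = W$ and $\|B - BP_W\| < \epsi/3$. Since $BB^* \in \LLl_+(AA^*)$, I select $C \in L_+(AA^*)$ close to $BB^*$; Douglas's range-inclusion theorem yields $\sqrt{C} = AD$ and, refining this, a factorisation $B_\epsi = A T_\epsi$ with $\|B_\epsi - B\| < \epsi/3$, where $T_\epsi$ is the canonical Douglas factor satisfying $\RrR(T_\epsi) \subset \overline{\RrR}(A^*)$ and $\NnN(T_\epsi) = \NnN(B_\epsi)$. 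Because modifications of $T_\epsi$ supported in $\NnN(A)$ leave $A T_\epsi$ unchanged, I will add a suitable perturbation to obtain $\tilde T$ with $\iota_i(\tilde T) = \iota_f(\tilde T)$; \COR{invert} then places $\tilde T$ in the norm-closure of $\GGg(\HHh)$, and choosing $L \in \GGg(\HHh)$ with $\|A\| \cdot \|L^{-1} - \tilde T\| < \epsi/3$ completes the approximation.

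\textbf{Main obstacle.} The hard part is the index-balancing step: choosing a $\NnN(A)$-supported perturbation of $T_\epsi$ that achieves $\iota_i(\tilde T) = \iota_f(\tilde T)$. The hypothesis $\iota_i(B) = \iota_i(A) + \iota_i(B^*|_{\overline{\RrR}(A)})$ will translate, via (P3) applied on both $\HHh$ and $\KKk$, into precisely the cardinal identity guaranteeing existence of such a perturbation. Carrying out the translation cleanly demands a case split according to whether $\iota_r(B) < \iota_r(A)$ or $\iota_r(B) = \iota_r(A)$, and within the latter according to $\iota_b(B) \in \{0,1\}$; \PRO{indices}(c)--(d) and \PRO{IC,b} will supply the requisite complete subspaces of prescribed dimension inside $\RrR(A)$ needed to realise the perturbation.
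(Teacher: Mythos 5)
Your necessity argument tracks the paper's closely; both derive $BB^*\in\LLl_+(AA^*)$ and then feed $B\in\cloRB_G^G(A)$ into \THM{orbit}. One caution: the paper avoids the cardinal-cancellation issue you gesture at by viewing $A$ and $B$ as elements of $\BBb(\HHh,\overline{\RrR}(A))$ and computing all indices there, where $\iota_f(A)=\IC(\RrR(A))$ and \PRO{indices}--(a) does the absorption cleanly; your phrase ``the identification holds after cardinal absorption'' is not quite the right statement (you do not need $\gamma=\iota_i(B^*|_{\overline{\RrR}(A)})$, only $\iota_i(A)+\gamma=\iota_i(A)+\iota_i(B^*|_{\overline{\RrR}(A)})$), and the passage to $\KKk_0=\overline{\RrR}(A)$ is the device that makes this rigorous.

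The sufficiency direction is where you depart genuinely from the paper, and where there is a real gap. The paper never invokes a Douglas factorisation or \COR{invert}: instead it fixes a complete piece $V\in\Upsilon(|B^*|)$ via \PRO{inside}, pulls the approximating positive operators $|A^*|T_n|A^*|$ from $\LLl_+(AA^*)$, produces unitaries $Z_n$ with $Z_nP_V\to P_V$ through \LEM{main}--(a), matches a carefully extracted chain of cardinal equalities (which is exactly where hypothesis \eqref{eqn:one} in its equivalent form \eqref{eqn:sec} is spent), and then builds the invertibles $G_n$ explicitly block-by-block. Your plan instead tries to solve $AX\approx B$ for $X$ in the closure of $\GGg(\HHh)$, using the criterion $\iota_i(X)=\iota_f(X)$ from \COR{invert}. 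This is an attractive reduction, but the decisive step --- producing a perturbation $S$ with $\RrR(S)\subset\NnN(A)$ making $\iota_i(T_\epsi+S)=\iota_f(T_\epsi+S)$ --- is only announced, not carried out, and it is not a routine matter. In particular, when $\NnN(A)=\{0\}$ (e.g.\ $A$ injective) the only admissible $S$ is $0$, so the claim collapses to: the canonical Douglas factor $T_\epsi=DQ_B$ already satisfies $\iota_i(T_\epsi)=\iota_f(T_\epsi)$. Nothing in your write-up establishes that, and it is far from obvious; $\iota_i$ and $\iota_f$ are wildly discontinuous, so closeness of $AT_\epsi$ to $B$ and $\iota_i(B)=\iota_f(B)$ give you no leverage. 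There is also a smaller unresolved point: ``refining'' the Douglas identity $\sqrt C=AD$ into a factorisation $B_\epsi=AT_\epsi$ with $\|B_\epsi-B\|$ small presumably means $T_\epsi:=DQ_B$ and $B_\epsi:=\sqrt C\,Q_B$, which does give $\|B_\epsi-B\|\leqsl\|\sqrt C-|B^*|\,\|$, but you never say so, and the index bookkeeping for this specific $T_\epsi$ (needed above) is nontrivial precisely because $\NnN(T_\epsi)$ and $\overline{\RrR}(T_\epsi)$ mix data from $D$, from $Q_B$, and from the chosen $C$. Until the index-balancing step is actually produced --- including the $\NnN(A)=\{0\}$ case, where it cannot be outsourced to a perturbation --- the sufficiency half of your argument remains an open promise rather than a proof.
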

\begin{proof}
Since (b) may be infered from (a) by passing to adjoints, we only need to show (a).\par
First suppose that $B \in \cloRB_G(A)$. This means that $B = \lim_{n\to\infty} A G_n$ for some $G_n \in \GGg(\HHh)$
and thus $BB^* = \lim_{n\to\infty} |A^*| Q G_n G_n^* Q^* |A^*|$ where $Q$ is the partial isometry appearing in the polar
decomposition of $A$. So, it follows from \eqref{eqn:L+} that $BB^* \in \LLl_+(AA^*)$. What is more, the latter implies
that $\overline{\RrR}(B) \subset \overline{\RrR}(A)$. For simplicity, put $\KKk_0 = \overline{\RrR}(A)$ and think
of $A$ and $B$ as members of $\BBb(\HHh,\KKk_0)$. Under such a consideration, $B \in \cloRB_G^G(A)$ and hence \THM{orbit}
implies that $\iota_i(B) = \iota_i(A) + \alpha$ and $\iota_f(B) = \iota_f(A) + \alpha$ for some cardinal $\alpha$
where all the indices which appear in both the equations are computed in the space $\BBb(\HHh,\KKk_0)$. Notice that
then $\iota_f(A) = \IC(\RRr(A))$, so $\iota_f(B) = \IC(\RrR(A)) + \alpha$ and (by \PRO{indices}--(a))
$\iota_i(B) = \iota_i(A) + \IC(\RrR(A)) + \alpha = \iota_i(A) + \iota_f(B)$. It suffices to observe that (still
in the space $\BBb(\HHh,\KKk_0)$) $\iota_f(B) = \dim (\overline{\RrR}(A) \ominus \overline{\RrR}(B)) + \IC(\RrR(B))
= \iota_i(B^*\bigr|_{\overline{\RrR}(A)})$ which finally gives \eqref{eqn:one}.\par
Now suppose that $BB^* \in \LLl_+(AA^*)$ and \eqref{eqn:one} is satisfied. As in the first part of the proof, notice
that then $\overline{\RrR}(B) \subset \overline{\RrR}(A)$ and thus $\iota_i(B^*\bigr|_{\overline{\RrR}(A)})
= \dim(\overline{\RrR}(A) \ominus \overline{\RrR}(B)) + \IC(\RrR(B))$. So, \eqref{eqn:one} is equivalent to
\begin{equation}\label{eqn:sec}
\iota_i(B) = \iota_i(A) + \IC(\RrR(B)) + \dim(\overline{\RrR}(A) \ominus \overline{\RrR}(B)).
\end{equation}
Fix $\epsi > 0$
and take $V \in \Upsilon(|B^*|)$ such that $|B^*|(V) = V$ and $\|\,|B^*| - |B^*|P_V\| \leqsl \epsi$. Since $V$
is a complete subspace of $\RrR(|B^*|) = \RrR(B)$, we see that $\dim(\overline{\RrR}(B) \ominus V) \geqsl \IC(\RrR(B))$
and thus
\begin{equation}\label{eqn:ic}
\dim(\overline{\RrR}(B) \ominus V) = \dim(\overline{\RrR}(B) \ominus V) + \IC(\RrR(B)).
\end{equation}
Let $B = Q |B|$
be the polar decompositions of $B$. Then also $B = |B^*| Q$. Put $W = Q^{-1}(V) \cap \overline{\RrR}(B^*)$. Note that
$W \in \Upsilon(B)$, $Q P_W = P_V Q$,
\begin{equation}\label{eqn:W}
B(W) = V \qquad \textup{and} \qquad \|B - B P_W\| < \epsi.
\end{equation}
Further, since $BB^* \in \LLl_+(AA^*)$, by \eqref{eqn:L+}, there is a sequence $T_1,T_2,\ldots$ of bounded nonnegative
operators on $\KKk$ such that $$BB^* = \lim_{n\to\infty} |A^*| T_n |A^*|.$$ We conclude from the relations
$V \in \Upsilon(|B^*|)$ and $|B^*|(V) = V$ that $V \in \Upsilon(BB^*)$ as well and therefore, thanks to \LEM{main}--(a),
after omitting finitely many entries of $(|A^*| T_n |A^*|)_{n=1}^{\infty}$, there is a sequence $(Z_n)_{n=1}^{\infty}$
of unitary operators on $\KKk$ such that
\begin{equation}\label{eqn:Vn}
Z_n(V) = V_n \textup{ for each } n
\end{equation}
where $V_n := |A^*|T_n |A^*|(V)$ is a closed subspace of $\KKk$, and
\begin{equation}\label{eqn:PV}
Z_n P_V \to P_V\ (n \to \infty).
\end{equation}
What is more, since we might restrict our argument (when taking $Z_n$) to $\overline{\RrR}(A)$
(and work in $\BBb(\HHh,\overline{\RrR}(A))$ and $\UUu(\overline{\RrR}(A))$), we may also assume that
\begin{equation}\label{eqn:ZnR}
Z_n(\overline{\RrR}(A)) = \overline{\RrR}(A) \textup{ for every } n.
\end{equation}
Observe that $V_n \subset \RrR(|A^*|) = \RrR(A)$ and thus $W_n \in \Upsilon(A)$ where $W_n := A^{-1}(V_n) \cap
\overline{\RrR}(A^*)$ and
\begin{equation}\label{eqn:ic2}
\dim(\overline{\RrR}(A) \ominus V_n) = \dim(\overline{\RrR}(A) \ominus V_n) + \IC(\RrR(A))
\end{equation}
(compare the proof of \eqref{eqn:ic}). We have:
\begin{equation}\label{eqn:AWn}
A(W_n) = V_n
\end{equation}
and, by \eqref{eqn:Vn},
\begin{equation}\label{eqn:dim}
\dim W_n = \dim V_n = \dim Z_n(V) = \dim V = \dim W.
\end{equation}
Now \LEM{dim} (applied twice) combined with \eqref{eqn:AWn}, \eqref{eqn:ic2}, \eqref{eqn:ZnR}, \eqref{eqn:Vn},
\eqref{eqn:ic} (twice) and \eqref{eqn:sec} yields
\begin{multline*}
\dim(\HHh \ominus W_n) = \dim \NnN(A) + \dim(\overline{\RrR}(A^*) \ominus W_n)\\
= \dim \NnN(A) + \dim(\overline{\RrR}(A) \ominus V_n)\\
= \dim \NnN(A) + \IC(\RrR(A)) + \dim(Z_n(\overline{\RrR}(A)) \ominus Z_n(V))\\
= \iota_i(A) + \dim(\overline{\RrR}(A) \ominus V)\\
= \iota_i(A) + \dim(\overline{\RrR}(A) \ominus \overline{\RrR}(B)) + \dim(\overline{\RrR}(B) \ominus V)\\
= \iota_i(A) + \dim(\overline{\RrR}(A) \ominus \overline{\RrR}(B)) + \IC(\RrR(B)) + \dim(\overline{\RrR}(B) \ominus V)\\
= \iota_i(B) + \dim(\overline{\RrR}(B) \ominus V) = \dim \NnN(B) + \IC(\RrR(B)) + \dim(\overline{\RrR}(B) \ominus V)\\
= \dim \NnN(B) + \dim(\overline{\RrR}(B) \ominus V) = \dim \NnN(B) + \dim(\overline{\RrR}(B^*) \ominus W)\\
= \dim (\HHh \ominus W).
\end{multline*}
The above connection and \eqref{eqn:dim} imply that there is $U_n \in \UUu(\HHh)$ for which $U_n(W) = W_n$. Now define
$G_n \in \GGg(\HHh)$ by: $$G_n\bigr|_W = (A\bigr|_{W_n})^{-1} Z_n B\bigr|_W \in \GGg(W,W_n)$$ (use \eqref{eqn:W},
\eqref{eqn:Vn} and \eqref{eqn:AWn} to see that $G_n\bigr|_W$ is well defined) and $G_n\bigr|_{\HHh \ominus W} = \frac1n
U_n\bigr|_{\HHh \ominus W} \in \GGg(\HHh \ominus W,\HHh \ominus W_n)$. We claim that
\begin{equation}\label{eqn:fin}
A G_n \to B P_W\ (n \to \infty).
\end{equation}
Indeed, $\lim_{n\to\infty} A G_n (I_{\HHh} - P_W) = \lim_{n\to\infty} \frac1n A U_n\bigr|_{\HHh \ominus W} = 0
= B P_W (I_{\HHh} - P_W)$ and, thanks to \eqref{eqn:W} and \eqref{eqn:PV}, $$\lim_{n\to\infty} A G_n P_W =
\lim_{n\to\infty} Z_n B P_W = \lim_{n\to\infty} Z_n P_V B P_W = P_V B P_W = B P_W.$$
Finally, we infer from \eqref{eqn:W} and \eqref{eqn:fin} that $\|A G_n - B\| \leqsl \epsi$ for some $n$, which finishes
the proof.
\end{proof}

The next result has its natural counterpart for the closures of $\oRB^G$.

\begin{cor}{G}
Let $A, B \in \BBb(\HHh,\KKk)$.
\begin{enumerate}[\upshape(I)]
\item $\cloRB_G(A) = \cloRB_G(B)$ iff $\LLl_+(AA^*) = \LLl_+(BB^*)$ and $\iota_i(B) = \iota_i(A)$.
\item Suppose $A$ is compact.
   \begin{enumerate}[\upshape(a)]
   \item $\cloRB_G(A)$ constists of all compact operators $C \in \BBb(\HHh,\KKk)$ such that $\RrR(C) \subset
      \overline{\RrR}(A)$.
   \item $\cloRB_G(A) = \cloRB_G(B)$ iff $B$ is compact and $\overline{\RrR}(B) = \overline{\RrR}(A)$.
   \end{enumerate}
\end{enumerate}
\end{cor}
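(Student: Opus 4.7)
The plan is to derive both parts directly from \THM{G}(a).

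For part (I), the key observation is that both hypotheses $\LLl_+(AA^*) = \LLl_+(BB^*)$ and $\iota_i(A) = \iota_i(B)$ are symmetric in $A$ and $B$, and that the first forces $\overline{\RrR}(A) = \overline{\RrR}(B)$ by \PRO{L+}(a) (via the standard identity $\overline{\RrR}(AA^*) = \overline{\RrR}(A)$). For sufficiency, once one has $\overline{\RrR}(A) = \overline{\RrR}(B)$ and $\iota_i(A) = \iota_i(B)$, the conditions appearing in \THM{G}(a) for $\cloRB_G(A)$ and for $\cloRB_G(B)$ become literally identical, so the two orbit closures coincide without any further work. For necessity, applying \THM{G}(a) to $B \in \cloRB_G(A)$ and to $A \in \cloRB_G(B)$ yields $BB^* \in \LLl_+(AA^*)$ and $AA^* \in \LLl_+(BB^*)$, after which \PRO{L+}(b) gives $\LLl_+(BB^*) \subset \LLl_+(AA^*)$ and the reverse inclusion. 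The two index conditions, together with the monotonicity $\alpha + \beta \geqsl \alpha$ of cardinal addition and Cantor-Bernstein, then imply $\iota_i(A) = \iota_i(B)$.

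For part (II)(a), I would apply \COR{comp} to the compact nonnegative operator $AA^* \in \BBb_+(\KKk)$: this identifies $\LLl_+(AA^*)$ with the set of compact $T \in \BBb_+(\KKk)$ satisfying $\RrR(T) \subset \overline{\RrR}(AA^*) = \overline{\RrR}(A)$. Since $CC^*$ is compact iff $C$ is compact, and $\overline{\RrR}(CC^*) = \overline{\RrR}(C)$, the first condition in \THM{G}(a) reduces to ``$C$ is compact with $\RrR(C) \subset \overline{\RrR}(A)$.'' The remaining index condition $\iota_i(C) = \iota_i(A) + \iota_i(C^*|_{\overline{\RrR}(A)})$ must be checked separately; using that a compact operator has separable range (hence $\IC(\RrR(\cdot)) \leqsl \aleph_0$), and splitting into the cases $\dim \HHh < \aleph_0$, $\dim \HHh = \aleph_0$, and $\dim \HHh > \aleph_0$, the identity $\dim \NnN(C) = \iota_i(A) + \dim(\overline{\RrR}(A) \ominus \overline{\RrR}(C))$ trivializes in each case.

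Part (II)(b) follows at once from (II)(a): the characterization there shows that, for compact $A$, the closure $\cloRB_G(A)$ depends on $A$ only through $\overline{\RrR}(A)$; hence $\cloRB_G(A) = \cloRB_G(B)$ forces $B$ compact (as a member of $\cloRB_G(A)$), after which symmetric comparison of ranges yields $\overline{\RrR}(A) = \overline{\RrR}(B)$; the converse is immediate from (II)(a). The only mildly delicate step is the cardinal bookkeeping inside (II)(a); by contrast, the sufficiency in (I) avoids any such manipulation once one notices that the hypotheses exactly match the data on which the characterization of \THM{G}(a) depends.
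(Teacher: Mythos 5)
Your proof follows essentially the same route as the paper: both parts rest on \THM{G}, \PRO{L+} and \COR{comp}, with a case split to verify the index condition \eqref{eqn:one} for compact operators. Your treatment of (I) is a bit more explicit than the paper's (matching the two characterizations verbatim for sufficiency; double application of \THM{G} plus cardinal comparison for necessity) but uses the same ingredients, and (II)(b) is handled just as the paper intends.

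The one correction concerns (II)(a). The ``identity'' you reduce to, $\dim\NnN(C)=\iota_i(A)+\dim(\overline{\RrR}(A)\ominus\overline{\RrR}(C))$, is obtained from \eqref{eqn:one} by cancelling the summand $\IC(\RrR(C))$ from both sides, and that cancellation is not valid in cardinal arithmetic. Indeed the equation is false in general: take $\HHh=\KKk$ separable infinite-dimensional, $A$ compact and injective with dense range, and $C=A$; then the left-hand side is $0$ while the right-hand side is $\aleph_0$. What must be checked is \eqref{eqn:one} itself, namely $\iota_i(C)=\iota_i(A)+\iota_i(C^*\bigr|_{\overline{\RrR}(A)})$, keeping the $\IC(\RrR(C))$ term. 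With that term restored the same casework goes through; the paper's split — $\iota_r(A)$ finite, $\HHh$ separable with $\RrR(A)$ nonclosed, $\HHh$ nonseparable — is slightly more convenient than your split on $\dim\HHh$, since it isolates exactly the case where $\IC(\RrR(A))=\aleph_0$, but either partition works once the correct condition is being verified.
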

\begin{proof}
The point (I) follows from \THM{G} and the fact that $\overline{\RrR}(B) = \overline{\RrR}(A)$
and $\iota_i(B^*\bigr|_{\overline{\RrR}(B)}) = \IC(\RrR(B))$ provided $\LLl_+(BB^*) = \LLl_+(AA^*)$.\par
To see (II), it suffices to apply \COR{comp} after observing that when $A$ and $B$ are compact
and $\RrR(B) \subset \overline{\RrR}(A)$, then \eqref{eqn:one} is fulfilled (consider separately the cases
when $\iota_r(A)$ is finite; $\HHh$ is separable and $\RrR(A)$ is nonclosed; and $\HHh$ is nonseparable).
\end{proof}

\end{document}